\renewcommand\eqref[1]{(\ref{#1})} 
\numberwithin{equation}{section}
\theoremstyle{plain}
\newtheorem{thm}{Theorem}[section]
\newtheorem{cor}[thm]{Corollary}
\newtheorem{lemma}[thm]{Lemma}
\theoremstyle{definition}
\newtheorem{defi}[thm]{Definition}
\def\p#1{{\left({#1}\right)}}
\def\supp{\textrm{supp}}
\def\C{{\mathbb C}}
\def\A{{\mathbb A}}
\def\Rr{{\mathbb R}}
\def\Sp{{\mathcal H}}
\def\ind{\mathcal I}
\def\Dcal{{\mathcal D}}
\def\H{{\mathcal L}}
\newcommand{\eps}{\varepsilon}
\newcommand{\esp}{\mathrm{e}}
\def\Dis{{\mathrm H}^{-\infty}}
\def\Cs{{\mathrm H}}
\newcommand{\beq}{\begin{equation}}
\newcommand{\eeq}{\end{equation}}
\begin{document}

\title[Wave equation for operators with discrete spectrum]
{Wave equation for operators with discrete spectrum and irregular propagation speed}

\author[Michael Ruzhansky]{Michael Ruzhansky}
\address{
  Michael Ruzhansky:
  \endgraf
  Department of Mathematics
  \endgraf
  Imperial College London
  \endgraf
  180 Queen's Gate, London, SW7 2AZ
  \endgraf
  United Kingdom
  \endgraf
  {\it E-mail address} {\rm m.ruzhansky@imperial.ac.uk}
  }
\author[Niyaz Tokmagambetov]{Niyaz Tokmagambetov}
\address{
  Niyaz Tokmagambetov:
  \endgraf
    al--Farabi Kazakh National University
  \endgraf
  71 al--Farabi ave., Almaty, 050040
  \endgraf
  Kazakhstan,
  \endgraf
   and
  \endgraf
    Department of Mathematics
  \endgraf
  Imperial College London
  \endgraf
  180 Queen's Gate, London, SW7 2AZ
  \endgraf
  United Kingdom
  \endgraf
  {\it E-mail address} {\rm n.tokmagambetov@imperial.ac.uk}
 }

\thanks{The first author was supported in parts by the EPSRC grant EP/K039407/1 and by the Leverhulme Grant RPG-2014-02. No new data was collected or generated during the course of research.}

\date{\today}

\subjclass{35G10.} \keywords{wave equation, well-posedness, Cauchy
problem, Sobolev spaces, Gevrey spaces, distributions, irregular coefficients}

\maketitle

\begin{abstract}
Given a Hilbert space $\Sp$, we investigate the well-posedness of the Cauchy problem for the wave equation for operators with discrete non-negative spectrum acting on $\Sp$.
We consider the cases when the time-dependent propagation speed is regular, H\"older, and distributional. We also consider cases when it it is strictly positive (strictly hyperbolic case) and when it is non-negative (weakly hyperbolic case). When the propagation speed is a distribution, we introduce the notion of ``very weak solutions" to the Cauchy problem. We  show that the Cauchy problem for the wave equation with the distributional coefficient has a unique ``very weak solution" in appropriate sense, which coincides with classical or distributional solutions when the latter exist. Examples include the harmonic oscillator and the Landau Hamiltonian on $\mathbb R^n$, uniformly elliptic operators of different orders on domains, H\"ormander's sums of squares on compact Lie groups and compact manifolds, operators on manifolds with boundary, and many others.
\end{abstract}

\section{Introduction}

Let $\H$ be a densely defined linear operator with the discrete spectrum
$\{\lambda_{\xi}\geq 0: \, \xi\in\ind\}$ on the Hilbert space $\Sp$. The main (and only) assumption in this paper will be that the system of
eigenfunctions
$\{e_{\xi}: \; \xi\in\ind\}$ is a basis in $\Sp$,
where $\ind$ is a countable set.

Note that we do not assume a-priori that the operator $\H$ is self-adjoint in any sense so that the basis
$\{e_{\xi}: \; \xi\in\ind\}$ does not have to be orthogonal.

In this paper, for a non-negative function $a=a(t)\geq 0$ and for the source term $f=f(t)\in\Sp$, we are interested in the well-posedness of the Cauchy problem for the operator $\H$ with the propagation speed given by $a$:
\begin{equation}\label{CPa}
\left\{ \begin{split}
\partial_{t}^{2}u(t)+a(t)\H u(t)&=f(t), \; t\in [0,T],\\
u(0)&=u_{0}\in\Sp, \\
\partial_{t}u(0)&=u_{1}\in\Sp.
\end{split}
\right.
\end{equation}
In order to provide a comprehensive analysis of such problems, we will treat several cases depending on the properties of $a$, and to a lesser extent of $f$. The reason behind this is that in each case the optimal results that we can get are different and depend on the properties of $a$. More specifically, we consider the following cases:
\begin{itemize}
\item[(I.1)] The coefficient $a$ and the source term $f$  are regular enough: $a\in C^{1}, f\in C$, and $a\geq a_{0}>0$. This is the classical case where we show the (natural) well-posedness of \eqref{CPa} in Sobolev spaces associated to $\H$.
\item[(I.2)] We consider the case when $a\in C^{\alpha}$, $0<\alpha<1$, $a\geq a_{0}>0$, is strictly positive and H\"older of order $\alpha$. In this case it is well-known already for $\H u=-u''$ on $\mathbb R$ that the Cauchy problem may be not well-posed in $C^{\infty}$ or in $\mathcal D'$
(see e.g. Colombini, de Giorgi and Spagnolo \cite{Colombini-deGiordi-Spagnolo-Pisa-1979} or \cite{Colombini-Jannelli-Spagnolo:Annals-low-reg}) and the Gevrey spaces appear naturally. Here we prove the well-posedness of \eqref{CPa} in the scale of $\H$-Gevrey spaces and $\H$-ultradistributions that we introduce for this purpose.
\item[(I.3)]  We consider the case when $a\in C^{\ell}$, $\ell\geq 2$, $a\geq 0$, is regular but may be equal to zero (the weakly hyperbolic case). In this case there may be also no well-posedness in  $C^{\infty}$ or in $\mathcal D'$ already for $\H u=-u''$ on $\mathbb R$. Here we also prove
the well-posedness of \eqref{CPa} in the scale of $\H$-Gevrey spaces and $\H$-ultradistributions.
\item[(I.4)] The last `regular' case is the weakly hyperbolic case with H\"older propagation speed: when $a\in C^{\alpha}$, $0<\alpha<2$, $a\geq 0$.
Here we also prove
the well-posedness of \eqref{CPa} in the scale of $\H$-Gevrey spaces and $\H$-ultradistributions depending on $\alpha$.
\end{itemize}
Consequently, we also consider the cases when $a$ is less regular than H\"older, allowing it to be a (positive) distribution, for example allowing the case
$$ a=1+\delta,$$
involving the $\delta$-distribution. Such type of setting appears in applications, for example when one is looking at the behaviour of a particle in irregular electromagnetic fields: in this case $\H$ is the Landau Hamiltonian on $\mathbb R^{n}$, and the corresponding wave equation was analysed by the authors in \cite{RT16a}. While from the physical point of view (of irregular electromagnetic fields) such situation is natural and one expects the well-posedness, mathematically equation \eqref{CPa} is difficult to handle because of the general impossibility to multiply distributions (recall the famous Schwartz impossibility result from \cite{Schwartz:impossibility-1954}).

In the setting of $\H$ being a second order invariant partial differential operator in $\mathbb R^{n}$, in \cite{GR15b}, Claudia Garetto and the first-named author introduced the notion of ``very weak solutions'', proving their existence, uniqueness, and consistency with classical or distributional solutions should the latter exist, for wave-type equations in $\mathbb R^{n}$. The setting of the present paper is different (since we assume that $\H$ has a discrete spectrum), and in \cite{RT16a} the authors proved the existence, uniqueness, and consistency for the case when $\H$ is the Landau Hamiltonian on $\mathbb R^{n}$ (see the example in Subsection \ref{SEC:LH2d}). Thus, the second aim of this paper is to develop the general notion of very weak solutions for the abstract problem \eqref{CPa}. In particular case of the Landau Hamiltonian, the results of this paper also extend those in \cite{RT16a} by allowing a wider class of admissible Cauchy data $u_{0},u_{1}$.
The analysis of very weak solutions is based on the results and techniques of cases (I.1)-(I.4).
Thus, in this paper we also consider the following situations:
\begin{itemize}
\item[(II.1)] The coefficient $a\geq a_{0}>0$ is a strictly positive distribution and the Cauchy data $u_{0},u_{1}$ and the source term $f(t)$ belong to the $\H$-Sobolev spaces $H_{\H}^{s}$ for some $s\in\mathbb R$. In this case we prove the existence and uniqueness of Sobolev-type very weak solutions, and their consistency with cases (I.1)-(I.4) when we know that stronger solutions exist.
\item[(II.2)] The coefficient $a\geq 0$ is a non-negative distribution and the Cauchy data $u_{0},u_{1}$ and $f(t)$ are $\H$-distributions or $\H$-ultradistributions. In this case we prove the existence and uniqueness of ultradistributional-type very weak solutions, and their consistency with cases (I.2)-(I.4) when we know that ultradistributional solutions exist.
\end{itemize}

We divide the presentation of our results in two parts for the cases (I.1)-(I.5) and (II.1)-(II.2), respectively.

\smallskip
We note that we can partially remove the condition that the spectrum $\lambda_{\xi}\geq 0$ is non-negative. Indeed, let $\H_{0}$ be a densely defined linear operator with the discrete spectrum
$\{\lambda_{\xi}\in\mathbb C: \, \xi\in\ind\}$ on the Hilbert space $\Sp$, and assume that the system of corresponding eigenfunctions
$\{e_{\xi}: \; \xi\in\ind\}$ is a basis in $\Sp$,
where $\ind$ is an ordered countable set. We denote by $\H:=|\H_{0}|$ the operator defined by
assigning the eigenvalue $|\lambda_{\xi}|$ for each eigenfunction $e_{\xi}$. Moreover, if $\lambda_{\xi}=0$ for some $\xi$, for example to define negative powers of an operator, we can put $\H:=|\H_{1}|$ to be the operator defined by the eigenvalue $(|\lambda_{\xi}|+c)$ to each eigenfunction $e_{\xi}$, with some positive $c>0$.  We note that $\H$ is not the absolute value of $\H_{0}$ in the operator sense since $\H_{0}$ and its adjoint $\H_{0}^{*}$ may have different domains and are, in general, not composable. However, this is well-defined by the symbolic calculus developed in \cite{RT16} (and extended in \cite{RT16b} to the full pseudo-differential calculus without the condition that eigenfunctions do not have zeros).
Therefore, all the results of the paper extend to the Cauchy problem
\begin{equation}\label{CPa0}
\left\{ \begin{split}
\partial_{t}^{2}u(t)+a(t)|\H_{0}| u(t)&=f(t), \; t\in [0,T],\\
u(0)&=u_{0}\in\Sp, \\
\partial_{t}u(0)&=u_{1}\in\Sp,
\end{split}
\right.
\end{equation}
if we apply the results for \eqref{CPa} taking $\H=|\H_{0}|$ in the above sense.

The organisation of the paper is as follows. In Section \ref{SEC:results} we formulate the results for the cases (I.1)-(I.5). In Section \ref{SEC:examples} we give examples of different settings with different operators $\H$ satisfying our assumptions (discrete spectrum and a basis of eigenfunctions).
In Section \ref{Sec:part2} we formulate the results for cases (II.1)-(II.2) corresponding to propagation speeds of low regularity.
In Section \ref{SEC:Prelim} we review elements of the (nonharmonic) Fourier analysis associated to $\H$.
In Section \ref{SEC:reduction} we prove results of Part I from Section \ref{SEC:results} and in Section \ref{SEC:proofs2} we prove results of Part II from Section \ref{Sec:part2}.

\section{Main results, Part I}
\label{SEC:results}


\vspace{1mm}

In our results below, concerning the Cauchy problem \eqref{CPa}, we first
carry out analysis in the strictly hyperbolic case $a(t)\ge a_0>0$, $a\in C^1([0,T])$.
This is the regular strictly hyperbolic type case when we obtain the well-posedness
in Sobolev spaces ${H}^{s}_\H$ associated to the operator $\H$:
for any $s\in\Rr$, we set
\begin{equation}\label{EQ:Sob}
H^s_\H:=\left\{ f\in\Dis_{\H}: \H^{s/2}f\in
\Sp\right\},
\end{equation}
with the norm $\|f\|_{H^s_\H}:=\|\H^{s/2}f\|_{\Sp}.$
The global space of $\H$-distributions $\Dis_{\H}$ is defined in
Section \ref{SEC:Prelim}. It is notationally more convenient to use the operator $\H^{1/2}$ in \eqref{EQ:Sob} because the operator in \eqref{CPa} is second order with respect to $t$: $\H$ is positive so $\H^{1/2}$ is well defined by its spectral decomposition, but in Section \ref{SEC:Prelim} we will also make a symbol definition of $\H^{1/2}$. Namely, $\sigma_{\H^{1/2}}(\xi)=\lambda_{\xi}^{1/2}$.
Anticipating the material of the next sections, using Lemma \ref{LEM: FTl2} and
Plancherel's identity \eqref{EQ:Plancherel}, in our case we can express the Sobolev norm as
\begin{equation}\label{EQ:Sob1}
\|f\|_{H^s_\H}= \p{ \sum_{\xi\in\ind} |\lambda_{\xi}|^{s}
\left| (f, e_{\xi})\right|^2}^{1/2},
\end{equation}
for any $s\in\mathbb R$,
where $(\cdot, \cdot)$ is the inner product of $\Sp$.

Moreover, if $\lambda_{\xi}=0$ for some $\xi$, in order to define negative powers of an operator, for eigenvectors corresponding to the zero eigenvalue, we can, without loss of generality, for some positive $c>0$, redefine $\H$ to be the operator assigning the eigenvalue $(|\lambda_{\xi}|+c)$ to each eigenfunction $e_{\xi}$.

\begin{thm}[Case I.1]
\label{theo_case_1}
Assume that $a\in C^1([0,T])$ and that $a(t)\ge a_0>0$.
For any $s\in\Rr$, if $f\in C([0, T], {H}^{s}_\H)$ and the Cauchy data satisfy
$(u_0,u_1)\in {H}^{s+1}_\H \times {H}^{s}_\H$,
then the Cauchy problem \eqref{CPa} has a unique solution
$u\in C([0,T],{H}^{s+1}_\H) \cap
C^1([0,T],{H}^{s}_\H)$ which satisfies the estimate
\begin{equation}
\label{case_1_last-est}
\|u(t,\cdot)\|_{{H}^{s+1}_\H}^2+\| \partial_t u(t,\cdot)\|_{{H}^s_\H}^2\leq
C (\| u_0\|_{{H}^{s+1}_\H}^2+\|u_1\|_{{H}^{s}_\H}^2+\|f\|_{C([0, T], {H}^{s}_\H)}^2).
\end{equation}
\end{thm}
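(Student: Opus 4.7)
The plan is to reduce \eqref{CPa} to a family of decoupled scalar second-order ODEs via the $\H$-Fourier expansion reviewed in Section \ref{SEC:Prelim}, and then to obtain the energy estimate \eqref{case_1_last-est} from a mode-wise Gronwall argument summed against the weights $\lambda_{\xi}^{s}$. Writing $\widehat{u}(t,\xi)$ for the $\xi$-th Fourier coefficient of $u(t)$, the Cauchy problem becomes, for each $\xi\in\ind$,
\begin{equation*}
\partial_{t}^{2}\widehat{u}(t,\xi)+a(t)\lambda_{\xi}\widehat{u}(t,\xi)=\widehat{f}(t,\xi),\quad \widehat{u}(0,\xi)=\widehat{u}_{0}(\xi),\;\; \partial_{t}\widehat{u}(0,\xi)=\widehat{u}_{1}(\xi).
\end{equation*}
Since $a\in C^{1}([0,T])$, classical ODE theory yields a unique $C^{2}$ modal solution for each $\xi$; any modes with $\lambda_{\xi}=0$ are solved by direct integration or eliminated a priori via the spectral shift described in the introduction.

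The heart of the argument is a uniform-in-$\xi$ energy estimate based on the modal energy
\begin{equation*}
E(t,\xi):=|\partial_{t}\widehat{u}(t,\xi)|^{2}+a(t)\lambda_{\xi}|\widehat{u}(t,\xi)|^{2}.
\end{equation*}
Differentiating in $t$ and substituting the ODE, the cross terms cancel and one is left with $2\operatorname{Re}\bigl(\widehat{f}(t,\xi)\overline{\partial_{t}\widehat{u}(t,\xi)}\bigr)+a'(t)\lambda_{\xi}|\widehat{u}(t,\xi)|^{2}$. Exploiting the strict positivity $a(t)\geq a_{0}$ to bound $\lambda_{\xi}|\widehat{u}|^{2}\leq a_{0}^{-1}E$, a Cauchy--Schwarz bound on the source term, and the control on $\|a'\|_{L^{\infty}([0,T])}$ afforded by $a\in C^{1}$, one arrives at a differential inequality of the form $E'(t,\xi)\leq K\,E(t,\xi)+|\widehat{f}(t,\xi)|^{2}$ with $K$ independent of $\xi$. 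Gronwall's lemma then gives
\begin{equation*}
E(t,\xi)\leq C\Bigl(E(0,\xi)+\int_{0}^{T}|\widehat{f}(s,\xi)|^{2}\,ds\Bigr),\qquad t\in[0,T],
\end{equation*}
with $C$ uniform in $\xi$.

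The last step is to multiply by $\lambda_{\xi}^{s}$ and sum over $\xi\in\ind$. By \eqref{EQ:Sob1} and Plancherel's identity \eqref{EQ:Plancherel}, the summed left-hand side is exactly $\|\partial_{t}u(t)\|_{H^{s}_{\H}}^{2}+a(t)\|u(t)\|_{H^{s+1}_{\H}}^{2}$, while the summed right-hand side is bounded by a constant multiple of $\|u_{1}\|_{H^{s}_{\H}}^{2}+\|u_{0}\|_{H^{s+1}_{\H}}^{2}+T\|f\|_{C([0,T],H^{s}_{\H})}^{2}$, using $a_{0}\leq a(t)\leq\|a\|_{L^{\infty}}$. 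This is exactly \eqref{case_1_last-est}. The same bound shows that the formal series assembled from the modal solutions converges in $C([0,T],H^{s+1}_{\H})\cap C^{1}([0,T],H^{s}_{\H})$ and solves \eqref{CPa} in this space, while uniqueness descends from mode-wise ODE uniqueness. The only real subtlety, rather than the energy computation itself, is the Fourier-analytic bookkeeping in the possibly non-orthogonal basis $\{e_{\xi}\}$: the uniformity of the Gronwall constant in $\xi$ is precisely what transfers the $\H$-Sobolev summability from the data to $(u(t),\partial_{t}u(t))$.
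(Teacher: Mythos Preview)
Your proof is correct and is essentially the same as the paper's: both take the $\H$-Fourier transform, use the modal energy $E(t,\xi)=|\partial_{t}\widehat{u}|^{2}+a(t)\lambda_{\xi}|\widehat{u}|^{2}$, apply Gronwall uniformly in $\xi$, and then sum against $\lambda_{\xi}^{s}$. The only cosmetic difference is that the paper packages the same computation as a first-order system $\partial_{t}V=i\nu(\xi)A(t)V+F$ with the symmetriser $S(t)=\mathrm{diag}(a(t),1)$, so that the energy is written $(S V,V)$ --- this formulation is set up for reuse in the proofs of Cases I.2--I.4, but in Case I.1 it unpacks to exactly your scalar energy.
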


As we have mentioned in the introduction, already in the setting of partial differential equations in $\mathbb R^{n}$, in the cases when $a$ is H\"older or non strictly positive, the well-posedness in the spaces of smooth functions or in the spaces of distributions fail. For example, it is possible to find smooth Cauchy data, taking also $f=0$, so that the Cauchy problem \eqref{CPa} would not have solutions in spaces of distributions, or a solution would exist but would not be unique - we refer to \cite{Colombini-Spagnolo:Acta-ex-weakly-hyp, Colombini-Jannelli-Spagnolo:Annals-low-reg} for respective examples. Therefore, already in such setting Gevrey spaces as well as spaces of ultradistributions appear naturally. Therefore, it is also natural to introduce these spaces in our setting.

It will be convenient to also use the notation $\Cs_{\H}^{\infty}$ for the space of test functions later on, defined by $\Cs_{\H}^{\infty}:=\cap_{s\geq0} H_{\H}^{s}$.
Then we can define the  $\H$-Gevrey (Roumieu)
space $\gamma^s_\H\subset \Cs_{\H}^{\infty}$ by the condition
\begin{equation}\label{DEF:GevL}
f\in \gamma^s_\H \Longleftrightarrow
\exists A>0:
\| \esp^{A \H^{\frac{1}{2s}}} f\|_{\Sp} <\infty,
\end{equation}
for $0<s<\infty$. The expression on the right hand side will be discussed in more detail in Section \ref{SEC:Prelim}. Similarly, we can define the $\H$-Gevrey (Beurling) spaces by
\begin{equation}\label{DEF:GevB}
f\in \gamma^{(s)}_\H \Longleftrightarrow
\forall A>0:
\| \esp^{A \H^{\frac{1}{2s}}} f\|_{\Sp} <\infty,
\end{equation}
for $0<s<\infty$.

We denote by $\Dis_{s}$ and $\Dis_{(s)}$ the spaces of linear continuous functionals on $\gamma^s_\H$ and $\gamma^{(s)}_\H$, respectively. We call them the Gevrey Roumieu ultradistributions and the Gevrey Beurling ultradistributions, respectively.
For further properties we refer to Section \ref{SEC:Prelim}.

\begin{thm}[Case I.2]
\label{theo_case_2}
Assume that $a(t)\ge a_0>0$ and that $a\in C^\alpha([0,T])$ with $0<\alpha<1$. Then
for initial data and for the source term
\begin{itemize}
\item[(a)] $u_0,u_1\in \gamma^s_\H$, $f\in C([0,T]; \gamma^s_\H)$,
\item[(b)] $u_0,u_1\in \Dis_{(s)}$, $f\in C([0,T]; \Dis_{(s)})$,
\end{itemize}
the Cauchy problem \eqref{CPa} has
a unique solution
\begin{itemize}
\item[(a)] $u\in C^2([0,T]; \gamma^s_\H)$,
\item[(b)] $u\in C^2([0,T]; \Dis_{(s)})$,
\end{itemize}
respectively, provided that
\begin{equation}\label{EQ:Case2-s}
1\le s<1+\frac{\alpha}{1-\alpha}.
\end{equation}
\end{thm}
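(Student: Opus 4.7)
The plan is to reduce \eqref{CPa} to a family of scalar second-order ODEs by expanding in the eigenbasis and to run a Colombini--De Giorgi--Spagnolo style energy argument with a mollifier scale tied to the eigenvalue $\lambda_{\xi}$. Writing $\widehat{u}(t,\xi)=(u(t),e_{\xi})$ and similarly for $f,u_0,u_1$, the Cauchy problem \eqref{CPa} becomes, for each $\xi\in\ind$,
\begin{equation*}
\partial_t^2\widehat{u}(t,\xi)+a(t)\lambda_{\xi}\widehat{u}(t,\xi)=\widehat{f}(t,\xi),\qquad \widehat{u}(0,\xi)=\widehat{u}_0(\xi),\quad \partial_t\widehat{u}(0,\xi)=\widehat{u}_1(\xi).
\end{equation*}
Everything then reduces to obtaining $\xi$-uniform bounds on $\widehat{u}(t,\xi)$ and $\partial_t\widehat{u}(t,\xi)$ with a growth in $\lambda_{\xi}$ compatible with the Gevrey / ultradistributional weights.

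Fix a non-negative mollifier $\psi_{\eps}$ on $\R$ and, after a suitable extension of $a$ to the real line, set $a_{\eps}:=a*\psi_{\eps}$. The Hölder hypothesis gives the classical estimates $|a(t)-a_{\eps}(t)|\le C\eps^{\alpha}$ and $|a_{\eps}'(t)|\le C\eps^{\alpha-1}$, while $a_{\eps}(t)\ge a_0/2$ for all small $\eps$. I then introduce the approximate energy
\begin{equation*}
E_{\eps}(t,\xi):=|\partial_t\widehat{u}(t,\xi)|^2+a_{\eps}(t)\lambda_{\xi}|\widehat{u}(t,\xi)|^2+|\widehat{u}(t,\xi)|^2,
\end{equation*}
where the last summand handles the finitely many small eigenvalues. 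Differentiating and substituting the ODE produces the error terms $(a_{\eps}-a)\lambda_{\xi}\,\widehat{u}\,\overline{\partial_t\widehat{u}}$ and $a_{\eps}'\lambda_{\xi}|\widehat{u}|^2$, which by the above Hölder bounds lead to
\begin{equation*}
\frac{d}{dt}E_{\eps}(t,\xi)\le C\bigl(1+\eps^{\alpha}\lambda_{\xi}^{1/2}+\eps^{\alpha-1}\bigr)E_{\eps}(t,\xi)+|\widehat{f}(t,\xi)|^2.
\end{equation*}

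The key balance is to take $\eps(\xi):=\lambda_{\xi}^{-1/2}$ (for $\lambda_{\xi}$ large), so that both non-trivial coefficients equal $\lambda_{\xi}^{(1-\alpha)/2}$. Gronwall's inequality then yields
\begin{equation*}
E_{\eps(\xi)}(t,\xi)\le C\,\esp^{CT\lambda_{\xi}^{(1-\alpha)/2}}\Bigl(E_{\eps(\xi)}(0,\xi)+\sup_{[0,T]}|\widehat{f}(\cdot,\xi)|^2\Bigr),
\end{equation*}
which is the heart of the argument. The final step is to match this growth against the Gevrey/ultradistributional weights: the hypothesis $s<1+\alpha/(1-\alpha)=1/(1-\alpha)$ is precisely equivalent to $(1-\alpha)/2<1/(2s)$, so the loss $\esp^{CT\lambda_{\xi}^{(1-\alpha)/2}}$ is asymptotically dominated by $\esp^{A\lambda_{\xi}^{1/(2s)}}$. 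For case (a) in the Roumieu class $\gamma^s_{\H}$, data decaying as $\esp^{-A\lambda_{\xi}^{1/(2s)}}$ for some $A>0$ yield a solution decaying as $\esp^{-A'\lambda_{\xi}^{1/(2s)}}$ for some $A'>0$; for case (b), where $u_0,u_1,f\in\Dis_{(s)}$ have Fourier coefficients bounded by $C\esp^{A\lambda_{\xi}^{1/(2s)}}$ for some $A$, the extra growth by $\esp^{CT\lambda_{\xi}^{(1-\alpha)/2}}$ only changes the admissible constant, keeping the solution in $\Dis_{(s)}$. Uniqueness in both classes follows by applying the same energy estimate to the difference of two solutions.

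The main obstacle is the precise $\xi$-uniform calibration of the two error terms and the correct selection $\eps(\xi)=\lambda_{\xi}^{-1/2}$; the strict inequality in \eqref{EQ:Case2-s} is what converts the polynomial-in-$\lambda_{\xi}$ loss in the exponent into a genuine subordinate term with respect to $\lambda_{\xi}^{1/(2s)}$, and must be invoked with care in the Beurling case so that the quantifiers \emph{``for all $A>0$''} remain compatible on both sides. The rest (continuity in $t$ with values in $\gamma^s_{\H}$ or $\Dis_{(s)}$, passage to the limit $\eps\to0$ whenever classical solutions exist, and propagation of $C^2$-regularity in $t$) is then routine, using the $\H$-Plancherel identity \eqref{EQ:Plancherel} and the characterisation of $\gamma^s_{\H}$, $\gamma^{(s)}_{\H}$ from Section \ref{SEC:Prelim}.
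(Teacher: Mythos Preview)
Your argument is correct and follows a genuinely different (though classical) route from the paper's own proof. You work directly with the scalar approximate energy
\[
E_{\eps}(t,\xi)=|\partial_t\widehat u|^2+a_{\eps}(t)\lambda_\xi|\widehat u|^2+|\widehat u|^2,
\]
mollifying the coefficient $a$ itself, and then apply Gronwall after the balance $\eps=\lambda_\xi^{-1/2}$; this is essentially the original Colombini--De Giorgi--Spagnolo scheme. The paper instead passes to the first-order system, builds an \emph{approximate diagonaliser} $H(t)$ from the mollified characteristic root $\lambda(t,\eps)=(\sqrt{a}*\varphi_\eps)(t)$, and inserts a Gevrey weight $\esp^{-\rho(t)\nu(\xi)^{1/s}}$ directly into the ansatz $V=\esp^{-\rho\nu^{1/s}}(\det H)^{-1}HW$, choosing $\rho(t)=\rho(0)-\kappa t$ so that $\partial_t|W|^2\le 0$. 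Both arguments reach the same threshold $s<1/(1-\alpha)$ via the identical scaling $\eps=\nu(\xi)^{-1}$. Your approach is more elementary and avoids the matrix algebra; the paper's diagonalisation is more systematic and extends uniformly to the weakly hyperbolic Cases I.3--I.4 and to higher-order systems. One small wording slip: in your closing remark about case (b), the relevant quantifier for $\Dis_{(s)}$ is \emph{existential} (Corollary~\ref{Fourier character--1}), not ``for all $A>0$''; this is exactly why the exponential loss $\esp^{CT\lambda_\xi^{(1-\alpha)/2}}$ can simply be absorbed into a larger $\eta$, and your actual argument for (b) handles it correctly.
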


We now consider the situation when the propagation speed $a(t)$ may become
zero but is regular, i.e. $a\in C^\ell([0,T])$ for $\ell\geq 2$.

\begin{thm}[Case I.3]
\label{theo_case_3}
Assume that $a(t)\ge 0$ and that $a\in C^\ell([0,T])$ with $\ell\ge 2$.
Then for initial data and for the source term
\begin{itemize}
\item[(a)] $u_0,u_1\in \gamma^s_\H$, $f\in C([0,T]; \gamma^s_\H)$,
\item[(b)] $u_0,u_1\in \Dis_{(s)}$, $f\in C([0,T]; \Dis_{(s)})$,
\end{itemize}
the Cauchy problem \eqref{CPa} has a unique solution
\begin{itemize}
\item[(a)] $u\in C^2([0,T]; \gamma^s_\H)$,
\item[(b)] $u\in C^2([0,T]; \Dis_{(s)})$,
\end{itemize}
respectively, provided that
\begin{equation}\label{EQ:s1}
1\le s<1+\frac{\ell}{2}.
\end{equation}
If $a(t)\ge 0$ belongs to $C^\infty([0,T])$ then the Cauchy problem \eqref{CPa} is well-posed as in (a) or (b) for every $s\geq 1$.
\end{thm}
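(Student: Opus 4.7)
\textbf{Proof plan for Theorem \ref{theo_case_3}.}
The plan is to pass to the $\mathcal{H}$-Fourier side (as set up in Section \ref{SEC:Prelim}) and reduce \eqref{CPa} to the decoupled family of ODEs
\begin{equation*}
\partial_t^2 \widehat{u}(t,\xi) + a(t)\,\lambda_\xi\, \widehat{u}(t,\xi) = \widehat{f}(t,\xi),\qquad \xi\in\ind,
\end{equation*}
with initial conditions $\widehat{u}(0,\xi)=\widehat{u_0}(\xi)$ and $\partial_t \widehat{u}(0,\xi)=\widehat{u_1}(\xi)$. Setting $\nu:=\lambda_\xi^{1/2}$, the aim is to prove an $\xi$-by-$\xi$ energy estimate of the form $|\partial_t\widehat{u}(t,\xi)|^2+\lambda_\xi|\widehat{u}(t,\xi)|^2 \le C\,\exp\bigl(K\lambda_\xi^{\sigma}\bigr)\cdot\bigl(\text{data}\bigr)$ with exponent $\sigma=\tfrac{1}{\ell+2}$ in the finitely differentiable case, and $\sigma$ arbitrarily small when $a\in C^\infty$. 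Once such a pointwise bound is established, the Gevrey Roumieu/Beurling condition \eqref{DEF:GevL}--\eqref{DEF:GevB} for the initial data and source translates into the corresponding Gevrey bound for the solution as soon as $\tfrac{1}{2s}>\sigma$, which is exactly $1\le s<1+\ell/2$; the $C^\infty$ case then gives every $s\ge 1$.

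The core technical step is the energy estimate for each mode. Following the classical Colombini--Jannelli--Spagnolo scheme, I would introduce the $\nu$-dependent regularization $a_\varepsilon(t):=a(t)+\varepsilon$ with $\varepsilon=\varepsilon(\nu)$ to be chosen, and the approximate energy
\begin{equation*}
E_\varepsilon(t,\xi):= |\partial_t\widehat{u}(t,\xi)|^2 + a_\varepsilon(t)\,\lambda_\xi\,|\widehat{u}(t,\xi)|^2.
\end{equation*}
Differentiating and using the equation produces the three contributions: the source term (handled by Cauchy--Schwarz), the cross term $2\varepsilon\lambda_\xi\mathrm{Re}(\widehat{u}\,\overline{\partial_t\widehat{u}})$, which is bounded by $\sqrt{\varepsilon}\,\nu\,E_\varepsilon$, and the degenerate term $a'(t)\lambda_\xi|\widehat{u}|^2$, bounded by $\tfrac{|a'(t)|}{a_\varepsilon(t)}\,E_\varepsilon$. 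The essential ingredient here is a Glaeser-type inequality for non-negative $C^\ell$ functions, which for $\ell\ge 2$ yields $|a'(t)|\le C\,a_\varepsilon(t)^{1-1/\ell}$, hence $\tfrac{|a'(t)|}{a_\varepsilon(t)}\le C\,\varepsilon^{-1/\ell}$. Gr\"onwall's inequality therefore gives
\begin{equation*}
E_\varepsilon(t,\xi)\le C\bigl(E_\varepsilon(0,\xi)+\|\widehat{f}(\cdot,\xi)\|_{L^\infty}^2\bigr)\,\exp\!\Bigl(K\bigl(\sqrt{\varepsilon}\,\nu+\varepsilon^{-1/\ell}\bigr)T\Bigr),
\end{equation*}
and the balance $\sqrt{\varepsilon}\,\nu=\varepsilon^{-1/\ell}$, i.e.\ $\varepsilon=\nu^{-2\ell/(\ell+2)}$, minimizes the right-hand side and produces the exponent $\sigma=\tfrac{1}{\ell+2}=\tfrac{2}{2(\ell+2)}$ in $\lambda_\xi^\sigma$. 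In the $C^\infty$ case one instead iterates the Glaeser-type control on higher derivatives, obtaining $\varepsilon^{-1/\ell}$ for every $\ell\ge 2$ and thus every exponent $\sigma>0$.

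Once the modewise estimate is in place, summing over $\xi$ against the weights $\esp^{2A\lambda_\xi^{1/(2s)}}$ from \eqref{DEF:GevL}--\eqref{DEF:GevB} yields the Gevrey bounds on the solution $u(t,\cdot)$ and $\partial_t u(t,\cdot)$, using that $\tfrac{1}{\ell+2}<\tfrac{1}{2s}$ under \eqref{EQ:s1} (and arbitrarily small exponent when $a\in C^\infty$); a parallel bound on $\partial_t^2 u$ follows directly from the equation since multiplication by $a(t)\lambda_\xi$ merely changes the index by a polynomial factor in $\lambda_\xi$. Uniqueness in the $\gamma^s_\mathcal{H}$-scale is a consequence of the same energy estimate applied to the difference of two solutions with vanishing data. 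Finally, the ultradistributional statement (b) is obtained by duality: testing against elements of $\gamma^{(s)}_\mathcal{H}$ reduces the problem to a dual Cauchy problem in the Roumieu class, and the estimate just proved transfers to $\Dis_{(s)}$. The main obstacle is the Glaeser-type inequality and the correct optimization of $\varepsilon=\varepsilon(\nu)$; everything else is a routine adaptation of the $C^\ell$ weakly hyperbolic scheme to the abstract spectral setting described in Section \ref{SEC:Prelim}.
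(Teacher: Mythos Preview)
Your overall strategy---Fourier reduction, the perturbed energy $E_\varepsilon=|\partial_t\widehat u|^2+(a+\varepsilon)\lambda_\xi|\widehat u|^2$, Gronwall, then balancing $\varepsilon=\varepsilon(\nu)$---is correct and is essentially the paper's quasi-symmetriser argument written at the scalar level (your $\varepsilon$ is the paper's $\eps^2$, and your $E_\varepsilon$ coincides with $(Q_\eps V,V)$). The final balance and the resulting exponent $\nu^{2/(\ell+2)}$ are right.

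There is, however, a genuine gap at the step you flag as the ``main obstacle''. The pointwise Glaeser-type inequality you invoke,
\[
|a'(t)|\le C\,\bigl(a(t)+\varepsilon\bigr)^{1-1/\ell}\qquad(\ell\ge 2),
\]
is \emph{false} for $\ell>2$. Take $a(t)=t^2$, which lies in $C^\ell$ for every $\ell$: then $|a'(t)|=2|t|=2a(t)^{1/2}$, and at $t=\sqrt{\varepsilon}$ one has $|a'|=2\sqrt{\varepsilon}$ while $(a+\varepsilon)^{1-1/\ell}=(2\varepsilon)^{1-1/\ell}$, so the claimed bound would force $\varepsilon^{1/2}\lesssim\varepsilon^{1-1/\ell}$, i.e.\ $\ell\le 2$. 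Glaeser's inequality gives only $|a'|\le C\sqrt{a}$ regardless of how smooth $a$ is, and this is sharp.

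What the paper actually uses (citing \cite{KS06,GR13}) is the \emph{integral} estimate
\[
\int_0^T \frac{|a'(t)|}{a(t)+\varepsilon}\,dt\;\le\; C\,\varepsilon^{-1/\ell},
\]
equivalently $\int_0^T\frac{|(\partial_t Q_\eps V,V)|}{(Q_\eps V,V)}\,dt\le C\eps^{-2/\ell}$ in the quasi-symmetriser notation. This is where the $C^\ell$ regularity enters: it controls the total variation of $\log(a+\varepsilon)$, not the pointwise size of $|a'|/(a+\varepsilon)$. With the integral bound in place, Gronwall in integral form gives exactly your estimate $E_\varepsilon(t)\le C\,E_\varepsilon(0)\exp\bigl(K(\varepsilon^{-1/\ell}+\sqrt{\varepsilon}\,\nu)T\bigr)$, and the rest of your argument (balancing, lifting to $\gamma^s_{\mathcal H}$, the $C^\infty$ case by letting $\ell\to\infty$) goes through unchanged.

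A minor remark on part (b): the paper does not argue by duality. It simply reuses the same modewise estimate \eqref{est_1} and reads it against the Fourier characterisation of $\Dis_{(s)}$ (exponential growth $\le Ce^{\eta\lambda_\xi^{1/(2s)}}$ rather than decay). Your duality route is plausible, but the direct approach is shorter here.
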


We now consider the case which is complementary to that in Theorem \ref{theo_case_3},
namely, when the propagation speed $a(t)$ may become
zero and is less regular, i.e. $a\in C^\alpha([0,T])$ for $0<\alpha<2$.

\begin{thm}[Case I.4]
\label{theo_case_4}
Assume that $a(t)\ge 0$ and that $a\in C^\alpha([0,T])$ with $0<\alpha<2$.
Then, for initial data and for the source term
\begin{itemize}
\item[(a)] $u_0,u_1\in \gamma^s_\H$, $f\in C([0,T]; \gamma^s_\H)$,
\item[(b)] $u_0,u_1\in \Dis_{(s)}$, $f\in C([0,T]; \Dis_{(s)})$,
\end{itemize}
the Cauchy problem \eqref{CPa} has a unique solution
\begin{itemize}
\item[(a)] $u\in C^2([0,T]; \gamma^s_\H)$,
\item[(b)] $u\in C^2([0,T]; \Dis_{(s)})$,
\end{itemize}
respectively, provided that
\begin{equation}\label{EQ:s2}
1\le s<1+\frac{\alpha}{2}.
\end{equation}
\end{thm}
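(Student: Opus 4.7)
My strategy is to run a parameter-dependent energy estimate on the Fourier side (in the non-harmonic basis $\{e_\xi\}$), in the spirit of the classical Colombini-Jannelli-Spagnolo scheme for weakly hyperbolic equations with H\"older coefficients.

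Setting $\widehat u(t,\xi):=(u(t,\cdot),e_\xi)$ decouples \eqref{CPa} into a family of scalar ODEs
\begin{equation*}
\partial_t^2 \widehat u(t,\xi)+a(t)\,\lambda_\xi\,\widehat u(t,\xi)=\widehat f(t,\xi),\qquad \widehat u(0,\xi)=\widehat{u_0}(\xi),\;\partial_t\widehat u(0,\xi)=\widehat{u_1}(\xi),
\end{equation*}
parameterised by $\xi\in\ind$, and the whole task is to track the $\lambda_\xi$-dependence of $\widehat u$. For a standard mollifier $\phi_\eps$, I then set $a_\eps:=a*\phi_\eps$ with $\eps=\eps(\xi)>0$ to be chosen. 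The H\"older assumption gives $|a-a_\eps|\le C\eps^\alpha$, while non-negativity of $a$ combined with $a\in C^\alpha([0,T])$, $0<\alpha<2$, produces the Glaeser-Colombini-type estimate $|a_\eps'(t)|\le C\sqrt{a_\eps(t)+\eps^\alpha}\,\eps^{\alpha/2-1}$.

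Next I introduce the approximate energy
\begin{equation*}
E_\eps(t,\xi):=|\partial_t\widehat u(t,\xi)|^2+\lambda_\xi\bigl(a_\eps(t)+\eps^\alpha\bigr)|\widehat u(t,\xi)|^2.
\end{equation*}
Differentiating in $t$ and using the ODE produces three error terms: $(a_\eps-a)\lambda_\xi$ from the regularisation error, $\eps^\alpha\lambda_\xi$ from the offset, and $a_\eps'(t)\lambda_\xi$. Bounding each against $E_\eps$ by Cauchy-Schwarz and the Glaeser-Colombini inequality, and then applying Gronwall, yields
\begin{equation*}
E_\eps(t,\xi)\le C\,\Bigl(E_\eps(0,\xi)+\sup_{[0,T]}|\widehat f(\cdot,\xi)|^2\Bigr)\exp\!\Bigl(KT\bigl(\sqrt{\eps^\alpha\lambda_\xi}+\eps^{-1}\bigr)\Bigr).
\end{equation*}
Balancing $\sqrt{\eps^\alpha\lambda_\xi}=\eps^{-1}$ forces the choice $\eps(\xi):=\lambda_\xi^{-1/(2+\alpha)}$, under which the Gronwall exponent collapses to $KT\,\lambda_\xi^{1/(2+\alpha)}$. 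Since $1/(2+\alpha)=1/(2s^*)$ with $s^*:=1+\alpha/2$, for any $1\le s<s^*$ the loss $\exp(KT\lambda_\xi^{1/(2+\alpha)})$ is strictly dominated by $\exp(A\lambda_\xi^{1/(2s)})$ and hence absorbed.

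Combining the mode-by-mode bound with Plancherel's identity and the characterisations \eqref{DEF:GevL}-\eqref{DEF:GevB} of $\gamma^s_\H$ and $\gamma^{(s)}_\H$ proves part (a) in both the Roumieu and Beurling forms. Part (b) for ultradistributional data then follows by duality: the adjoint Cauchy problem has the same structure, so that testing against $\gamma^s_\H$ or $\gamma^{(s)}_\H$ and transposing gives well-posedness in $\Dis_s$ and $\Dis_{(s)}$. The main obstacle is the Glaeser-Colombini bound for $|a_\eps'|$: in the range $\alpha\in(0,1]$ it cannot be obtained from direct differentiation of $a_\eps$ and must be deduced by combining the non-negativity $a\geq 0$ with the regularisation scale through a non-trivial integral lemma, while in $\alpha\in(1,2)$ one applies the classical Glaeser inequality to $a_\eps$ together with $\|a_\eps''\|_\infty\le C\eps^{\alpha-2}$. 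It is exactly this inequality that pins down the critical Gevrey exponent $s^*=1+\alpha/2$.
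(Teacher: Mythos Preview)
Your approach is correct and follows a genuinely different route from the paper. The paper reduces to the first-order system and performs an approximate diagonalisation via the matrix $H$ built from regularised \emph{roots} $\lambda_{1,2}(t,\eps)=(\mp\sqrt{a}*\varphi_\eps)(t)+c\,\eps^{\alpha'}$ (after the substitution $\alpha\mapsto 2\alpha'$ so that $\sqrt{a}\in C^{\alpha'}$); it then estimates $\bigl|\frac{\partial_t\det H}{\det H}\bigr|$, $\|H^{-1}\partial_t H\|$ and $\|H^{-1}AH-(H^{-1}AH)^*\|$, inserts the exponential weight $e^{-\rho(t)\nu^{1/s}(\xi)}$, and balances with $\eps=\nu(\xi)^{-1/(1+\alpha')}$. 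You instead stay with the scalar equation and use the direct approximate energy $|\partial_t\widehat u|^2+\lambda_\xi(a_\eps+\eps^\alpha)|\widehat u|^2$, which is the original Colombini--De Giorgi--Spagnolo scheme; the crucial input is then the Glaeser-type bound on $|a_\eps'|$ rather than estimates on regularised square roots. Both routes yield the same threshold $s^*=1+\alpha/2$. Your method is more elementary and avoids the matrix algebra; the paper's diagonalisation is heavier here but is the template that generalises to systems and higher-order equations (as in \cite{GR:11,GR13}).

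Two minor remarks. First, the Glaeser inequality $|a_\eps'|^2\le 2a_\eps\|a_\eps''\|_\infty$ applies to the smooth non-negative function $a_\eps$ for \emph{every} $0<\alpha<2$, and the mollifier estimate $\|a_\eps''\|_\infty\le C\eps^{\alpha-2}$ holds uniformly in this range (using $\int\phi''=0$); so no separate ``integral lemma'' is actually needed for $\alpha\in(0,1]$. Second, for part (b) the paper does not argue by duality but simply re-runs the same Fourier-side estimate starting from data satisfying the Beurling growth bound $|\widehat{u_j}(\xi)|\le C\,e^{\eta\lambda_\xi^{1/(2s)}}$ from Corollary~\ref{Fourier character--1}; your duality argument is also valid since the adjoint problem has the identical structure.
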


The proofs of the above theorems will be given in Section \ref{SEC:reduction}.

\smallskip
Analogues of Parts (a) of the above theorems for the wave equation on $\mathbb R^{n}$ go back to Colombini, de Giorgi, and Spagnolo \cite{Colombini-deGiordi-Spagnolo-Pisa-1979}. For higher order hyperbolic equations in $\mathbb R$ the Gevrey well-posedness was considered in
\cite{ColKi:02} and \cite{KS06} under assumptions corresponding to Cases I.2 (a) and I.3 (a), which were extended to $\mathbb R^{n}$ in \cite{GR:11} and \cite{GR13}, respectively.
Other low regularity or multiple characteristics situations were considered in e.g. \cite{Bronshtein:TMMO-1980, Cicognani-Colombini:JDE-2013, Colombini-del-Santo-Reissig:BSM-2003}.
Equations with low regularity coefficients often come up in applications, see e.g.
\cite{Hormann-de-Hoop:AAM-2001, Hormann-de-Hoop:AA-2002}.
We refer to \cite{GR:11,GR13} for the history of the subject for hyperbolic equations on $\mathbb R^{n}$ with time-dependent coefficients, as well as for the sharpness of the orders from the theorems above in the case of $\mathbb R^{n}$.
The mathematical analysis of hyperbolic equations with discontinuous coefficients goes back to Hurd and Sattinger \cite{Hurd-Sattinger}. We refer to \cite{GR15b} for the historical review of this topic.






\section{Examples}
\label{SEC:examples}

In this section as an illustration we give several examples of the settings where our results are applicable. Of course, there are many other examples, here we collect the ones for which different types of partial differential equations have particular importance.
We first discuss self-adjoint, and then non-self-adjoint operators.

\subsection{Landau Hamiltonian in 2D}
\label{SEC:LH2d}

First, we describe the setting of the Landau Hamiltonian in 2D. Here, the results of this paper partially recover and also extend the results obtained in \cite{RT16a}.
More precisely, in \cite{RT16a} we considered the magnetic and electric fields of the operator separately, thus treating a more general model in the particular case of the Landau Hamiltonian. On the other hand, in \cite{RT16a} we obtained results corresponding to cases (I.1) and (II.1) only, not dealing with coefficients leading to the appearance of Gevrey type spaces. Therefore, the results of this paper extend those in \cite{RT16a} in the direction of H\"older propagation speeds as well as allowing more general Cauchy data and source terms.

We recall that the dynamics of a particle with charge $e$ and mass $m_{\ast}$ on the Euclidean $xy$--plane in the presence of the perpendicular constant homogeneous magnetic field is described by the  Hamiltonian operator
\begin{equation} \label{eq:Hamiltonian}
\mathcal H_{0}:=\frac{1}{2m_{\ast}} \p{i h \nabla-\frac{e}{c}\A}^{2},
\end{equation}
where $h$ denotes Planck's constant, $c$ is the speed of light and
$i$ the imaginary unit.
In the sequel we can set $m_{\ast}=e=c=h=1$.
With the symmetric gauge
$$
\A=-\frac{\mathrm r}{2}\times2\mathrm B=\p{- B y, B x} ,
$$
where $\mathrm r=(x, y) \in \mathbb R^{2}$,
and $2B>0$ the strength of the
magnetic field, one obtains the Landau Hamiltonian
\begin{equation} \label{eq:LandauHamiltonian}
\H:=\frac{1}{2} \p{\p{i\frac{\partial}{\partial x}-B
y}^{2}+\p{i\frac{\partial}{\partial y}+B x}^{2}},
\end{equation}
acting on the Hilbert space $L^{2}(\mathbb R^{2})$. The spectrum of
$\H$ consists of infinite number of eigenvalues (called the Euclidean Landau levels)
with
infinite multiplicity of the form
\begin{equation} \label{eq:HamiltonianEigenvalues}
\lambda_{n}=\p{2n+1}B, \,\,\, n=0, 1, 2, \dots \,,
\end{equation}
see \cite{F28, L30}.
Denoting the eigenspace of $\H$ corresponding to the eigenvalue $\lambda_{n}$ by
\begin{equation} \label{eq:HamiltonianEigenspaces}
\mathcal A_{n}(\mathbb R^{2})=\{\varphi\in L^{2}(\mathbb R^{2}), \,\, \H \varphi=\lambda_{n}\varphi\},
\end{equation}
its basis is given by (see \cite{ABGM15, HH13}):
{\small
\begin{equation*}
\label{eq:HamiltonianBasis} \left\{
\begin{split}
e^{1}_{k, n}(x,y)&=\sqrt{\frac{n!}{(n-k)!}}B^{\frac{k+1}{2}}\exp\Big(-\frac{B(x^{2}+y^{2})}{2}\Big)(x+iy)^{k}L_{n}^{(k)}(B(x^{2}+y^{2})), \,\,\, 0\leq k, {}\\
e^{2}_{j, n}(x,y)&=\sqrt{\frac{j!}{(j+n)!}}B^{\frac{n-1}{2}}\exp\Big(-\frac{B(x^{2}+y^{2})}{2}\Big)(x-iy)^{n}L_{j}^{(n)}(B(x^{2}+y^{2})), \,\,\, 0\leq j,
\end{split}
\right.
\end{equation*}}
where $L_{n}^{(\alpha)}$ are the Laguerre polynomials given by
$$
L^{(\alpha)}_{n}(t)=\sum_{k=0}^{n}(-1)^{k}C_{n+\alpha}^{n-k}\frac{t^{k}}{k!}, \,\,\, \alpha>-1.
$$
This basis appears also in many related subjects, such as
complex Hermite polynomials \cite{I16}, in
quantization \cite{ABG12,BG14,CGG10}, time-frequency analysis
\cite{A10}, partial differential equations \cite{G08}, planar
point processes \cite{HH13}, as well as in
the Feynman-Schwinger displacement
operator \cite{P86}. Their perturbations have been investigated in
\cite{K16, N96}, and the asymptotic behaviour of the eigenvalues was
analysed in \cite{S14, KP04, M91, PR07, PRV13, LR14, RT08}.

The results of this paper apply for the Cauchy problem \eqref{CPa} for the operator $\H$ from \eqref{eq:LandauHamiltonian}.

\subsection{Harmonic oscillator}
As a second example in any dimension $d\geq1$, we consider the harmonic oscillator of Quantum Mechanics,
$$
\H:=-\Delta+|x|^{2}, \,\,\, x\in\mathbb R^{d}.
$$
The operator $\H$ is essentially self-adjoint on $C_{0}^{\infty}(\mathbb R^{d})$ with eigenvalues
$$
\lambda_{k}=\sum_{j=1}^{d}(2k_{j}+1), \,\,\, k=(k_{1}, \cdots, k_{d})\in\mathbb N^{d},
$$
and with eigenfunctions
$$
u_{k}(x)=\prod_{j=1}^{d}P_{k_{j}}(x_{j}){\rm e}^{-\frac{|x|^{2}}{2}},
$$
which form an orthogonal system in $L^{2}(\mathbb R^{d})$. Here, $P_{n}(\cdot)$ is the $n$--th order Hermite polynomial, and
$$
P_{n}(t)=c_{n}{\rm e}^{\frac{|t|^{2}}{2}}\left(x-\frac{d}{dt}\right)^{n}{\rm e}^{-\frac{|t|^{2}}{2}},
$$
where $t\in\mathbb R$, and
$$
c_{n}=2^{-n/2}(n!)^{-1/2}\pi^{-1/4}.
$$
For more details on the associated spectral analysis, see for instance \cite{NiRo:10}.

\subsection{Higher dimensional Hamiltonian} \label{SEC:n}

Here we describe a higher dimensional example following \cite{RT16a}.
Let $x=(x_{1},\ldots,x_{2d})\in\mathbb R^{2d}$ and again setting all physical constants to be equal to $1$, in analogy to the case of $d=1$ in \eqref{eq:LandauHamiltonian}, let

\begin{equation} \label{eq:Hamiltonian2d}
\H:=\frac12 \p{i \nabla-\A}^{2},
\end{equation}
where
$$
\A=\p{- B_{1}x_{2}, B_{1}x_{1}, -B_{2}x_{4}, B_{2}x_{3},\ldots,-B_{d}x_{2d},B_{d}x_{2d-1}},
$$
corresponding to the magnetic fields of constant strengths $2B_{l}>0$, $l=1,\ldots,d$.
The essentially self-adjoint operator $\H$ on $C_{0}^{\infty}(\mathbb R^{2d})$ in the Hilbert space
$L^{2}(\mathbb R^{2d})=\otimes_{1}^{d}L^{2}(\mathbb R^{2})$ decomposes as
$$
\H=\mathcal H_{1}\otimes I^{\otimes (d-1)}+I\otimes \mathcal H_{2}\otimes I^{\otimes (d-2)}+\cdots+
 I^{\otimes (d-1)}\otimes \mathcal H_{d},
$$
with self-adjoint 2D operators $\mathcal H_{l}$ on $L^{2}(\mathbb R^{2})$ as in \eqref{eq:LandauHamiltonian}.
Let $k=(k_{1},\ldots, k_{d})\in\mathbb N_{0}^{d}$ be a multi-index.
Then in analogy to \eqref{eq:HamiltonianEigenvalues},
the spectrum of $\H$ consists of the infinitely degenerate eigenvalues
\begin{equation} \label{eq:HamiltonianEigenvalues2d}
\lambda_{k}=\sum_{l=1}^{d} B_{l}(2k_{l}+1),
\end{equation}
with eigenfunctions corresponding to \eqref{eq:HamiltonianBasis}.
In particular, in the isotropic case when $B_{l}=B>0$ for all $l$, for two multi-indices $k,k'\in \mathbb N_{0}^{d}$, if $|k|=|k'|$ then
$\lambda_{k}=\lambda_{k'}$ so that the spectrum of $\H$ consists of eigenvalues of the form $\lambda_{m}=B(2m+1)$ with $m\in\mathbb N_{0}.$
We refer e.g. to \cite{P09} and references therein for more details on the spectral analysis of this case.

\subsection{Regular elliptic boundary value problems}

Let $\H$ be a realisation in $L^{2}(\Omega)$ of a
regular elliptic boundary value problem, i.e. such that
the underlying differential
operator is uniformly elliptic and has smooth coefficients on
an open bounded set $\Omega\subset \mathbb R^{n}$, and that the
boundary conditions determining $\H$ are also regular in
some sense. Suppose that $\H$ is a positive elliptic
operator, so that it has a basis of eigenfunctions
in $L^{2}(\Omega)$.

\subsection{Sums of squares on compact Lie groups}
\label{SEC:exgroups}

Let $G$ be a compact Lie group and let $X_{1},\ldots,X_{k}$ be a basis of left-invariant vector fields satisfying the H\"ormander sums of squares condition. Let
$$\H=-\sum_{j=1}^{k}X_{j}^{2}$$
be the (positive) sub-Laplacian. Then $\H$ has a discrete spectrum which can be related to the spectrum of the bi-invariant Laplacian on $G$, see \cite{GR15} for the estimates, also involving the representations of $G$.

We refer to \cite{GR15} for a discussion on the spectral properties and
their history in this case. The cases (I.1)-(I.4) have been partially analysed in
\cite{GR15}, and the results in cases (II.1)-(II.2) extend them to the case of
less regular propagation speeds.

We refer to \cite{Ruzhansky-Turunen:BOOK} for questions related to the Fourier analysis on compact Lie groups.

\subsection{Weighted sub-Laplacians and sub-Riemannian Ornstein-Uhlenbeck operators on H-type and M\'etivier groups}

Let $G$ be a M\'etivier group and let $d$ and $\nabla_{X}$ be a homogeneous norm and the horizontal gradient on $G$, respectively. Let $\H_{\alpha}$ be the weighted sub-Laplacian associated to the Dirichlet form 
$$f\mapsto\int_{G}|\nabla_{X}f(x)|^{2}e^{-\frac12 d(x)^{\alpha}} dx.$$
If $G=\mathbb R^n$ and $d$ is the Euclidean norm, then for $\alpha=2$, the operator $\H_{2}$ is the classical Ornstein-Uhlenbeck operator. Such operators and their properties have been also intensively studied in sub-Riemannian settings. Thus, in \cite{I12} it was shown that if $G$ is an H-type group and $d$ is the Carnot-Carath\'eodory norm, then the operators $\H_{\alpha}$ have discrete spectrum for $\alpha>1$. If $G$ is a general M\'etivier group and $d$ is the Kaplan norm, then it was shown in \cite{BC16} that $\H_{\alpha}$ has discrete spectrum if and only if $\alpha>2$.
We refer to e.g. \cite{BLU07} for definitions of these groups and the corresponding norms.

\subsection{Operators on manifolds with boundary}

Let $M$ be a manifold with (possibly irregular) boundary $\partial M$.
Let $\H_{0}$ be an operator densely defined in $L^{2}(M)$, with discrete spectrum and the eigenfunctions forming a basis in $L^{2}(M)$.

If $M$ is a closed manifold (i.e. compact without boundary) and $\H_{0}$ is a positive elliptic pseudo-differential operator on $X$ then $\H=|\H_{0}|=\H_{0}$ in both the operator sense and in the sense explained in the introduction. In this case the basis of the eigenfunctions of $\H$ can be chosen to be orthonormal.

The operator does not have to be elliptic, for example, if we take a family $X_{1},\ldots,X_{k}$ of smooth vector fields on $M$ satisfying the H\"ormander condition, such that the necessarily positive spectrum of the operator
$$\H=\sum_{j=1}^{k}X_{j}X_{j}^{*}$$
corresponds to a basis in $L^{2}(M)$, then it satisfies our assumption. In the case of $M$ being a compact Lie group and left-invariant vector fields $X_{j}$, this recaptures the example in Section \ref{SEC:exgroups}.

Another example here may be the operator $\H=i\frac{d}{dx}$ on the manifold $M=[0,1]$ equipped with periodic boundary conditions $f(0)=f(1)$. This can be regarded as a special case of the situation above since in this case $M$ can be identified with the circle.

However, if we take the operator $\H_{0}=i\frac{d}{dx}$ on the manifold $M=[0,1]$ equipped with boundary conditions $hf(0)=f(1)$ for a fixed $h>0$, $h\not=1$, it is no longer self-adjoint.
Its eigenvalues are given by
$$ \lambda_{\xi}=-i\ln h+2\pi\xi,\quad \xi\in\mathbb Z,$$
corresponding to eigenfunctions
$$
e_{\xi}(x) =h^{x}e^{2\pi ix\xi},$$
which are not orthogonal.  We note here that
$\H=|\H_{0}|\not=\H_{0}$ makes sense in the symbolic sense of the introduction but not as a composition of $\H_{0}$ with $\H_{0}^{*}$ which can not be composed having different domains.
The function spaces, for example the Sobolev spaces $H^{s}_{\H}$ and $H^{s}_{\H_{0}}$ are comparable since the asymptotic distribution of $\lambda_{\xi}$ and $|\lambda_{\xi}|$ is the same.

The biorthogonal (nonharmonic) Fourier and symbolic analysis of such operators is still possible and was developed in \cite{RT16,RT16b} in a general setting, to which we refer for further details.

\subsection{Differential operators with strong regular boundary conditions}

We finish the list of examples with another non-self-adjoint operator, following \cite{RT16}.
Let ${\rm O}^{(m)}$ be an ordinary differential operator in
$L^{2}(0, 1)$ of order $m$ generated by the
differential expression
\begin{equation}
l(y)\equiv y^{(m)}(x)+\sum_{k=0}^{m-1}p_{k}(x)y^{(k)}(x), \quad
0<x<1, \label{EQ1A}
\end{equation}
with coefficients
$$
p_{k}\in C^{k}[0,1], \,\,\, k=0,1,\ldots,m-1,
$$
and boundary conditions
\begin{equation}
U_{j}(y)\equiv
V_{j}(y)+\sum\limits_{s=0}^{k_{j}}\int\limits_{0}^{1}y^{(s)}(t)\rho_{js}(t)dt=0,
\quad  j=1, \ldots, m, \label{EQ2A}
\end{equation}
where
$$
V_{j}(y)\equiv
\sum\limits_{s=0}^{j}[\alpha_{js}y^{(k_{s})}(0)+\beta_{js}y^{(k_{s})}(1)],
$$
with $\alpha_{js}$ and $\beta_{js}$ some real numbers, and $\rho_{js}\in
L^{2}(0, 1)$ for all $j$ and $s$.

Furthermore, we suppose that the boundary conditions (\ref{EQ2A})
are normed and strong regular in the sense considered by Shkalikov in \cite{Shkalikov}.
Then it can be shown that the eigenvalues have the same algebraic and
geometric multiplicities and, after a suitable adaption for our case, we have

\begin{thm}[\cite{Shkalikov}]
The eigenfunctions of the operator ${\rm O}^{(m)}$ with strong regular
boundary conditions (\ref{EQ2A}) form a Riesz basis in $L^{2}(0, 1)$.
\label{TH: Shkalikov}
\end{thm}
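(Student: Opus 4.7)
The plan is to follow the classical Birkhoff–Tamarkin–Shkalikov framework. The starting point is an asymptotic analysis of a fundamental system of solutions of the equation $l(y)=\lambda y$ for $|\lambda|$ large. Writing $\lambda=\rho^{m}$ and splitting the complex $\rho$-plane into $2m$ sectors $S_{k}$, one constructs, via successive approximations in an integral reformulation of $l(y)=\lambda y$, solutions with the asymptotics $y_{j}(x,\rho)\sim \esp^{\omega_{j}\rho x}(1+O(1/\rho))$, where $\omega_{1},\ldots,\omega_{m}$ are the $m$-th roots of unity, ordered in each sector so that $\operatorname{Re}(\omega_{1}\rho)\le\cdots\le \operatorname{Re}(\omega_{m}\rho)$. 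The smoothness assumption $p_{k}\in C^{k}[0,1]$ is exactly what is needed to extract enough terms in these expansions for the subsequent analysis of the boundary conditions.

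Next, I would substitute these asymptotics into the boundary conditions \eqref{EQ2A} to produce the characteristic determinant $\Delta(\rho)$ whose zeros are the eigenvalues. The strong regularity hypothesis on the forms $V_{j}$ is, by definition, the statement that in each sector the leading exponential terms of $\Delta(\rho)$ carry nonzero coefficients and pairwise distinct exponents. This yields that for $|\rho|$ sufficiently large the zeros of $\Delta$ are simple, asymptotically coincide (inside disks of uniformly bounded radius) with an arithmetic progression, and are separated by a uniform positive distance. In particular, we obtain an enumeration $\{\lambda_{\xi}\}$ of the spectrum with the corresponding eigenfunctions $\{e_{\xi}\}$ having explicit leading exponential profiles.

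The third step is to construct the Green's function of the boundary value problem and derive from it an operator-norm estimate of the type $\|({\rm O}^{(m)}-\lambda)^{-1}\|_{L^{2}\to L^{2}}\le C|\lambda|^{-1/m}$ along contours $\Gamma_{N}$ that enclose exactly the first $N$ eigenvalues while remaining bounded away from the spectrum. Representing the partial-sum spectral projector as $P_{N}=-\tfrac{1}{2\pi i}\oint_{\Gamma_{N}}({\rm O}^{(m)}-\lambda)^{-1}\,d\lambda$, this resolvent bound translates into a uniform estimate $\|P_{N}\|_{L^{2}\to L^{2}}\le C$ independent of $N$. By the classical criterion of Bari, a complete minimal system with uniformly bounded partial-sum projections is a Riesz basis; completeness is obtained in parallel from growth estimates for the entire function $\Delta$ together with the resolvent bounds.

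Finally, the integral terms $\int_{0}^{1}y^{(s)}(t)\rho_{js}(t)\,dt$ in \eqref{EQ2A} should be treated as compact perturbations of the ``pure'' boundary forms $V_{j}$; a standard Bari perturbation argument shows that the Riesz basis property established for the unperturbed boundary conditions survives their inclusion. The main technical obstacle is the sector-by-sector asymptotic analysis of $\Delta(\rho)$ under the strong regularity hypothesis and the resulting uniform bound on $\|P_{N}\|$: it is precisely this uniformity that distinguishes strong regularity from mere regularity (where one only obtains summation of the eigenfunction series in a weaker Abel–Riesz sense rather than a genuine Riesz basis).
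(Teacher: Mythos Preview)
The paper does not prove this theorem at all: it is stated as a citation from Shkalikov~\cite{Shkalikov} (and, in the special case without integral terms, from Naimark~\cite{Naimark}) and is used only as an example of an operator satisfying the paper's standing assumptions. There is therefore no ``paper's own proof'' to compare your proposal against.

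That said, your outline is a faithful sketch of the classical Birkhoff--Tamarkin--Shkalikov approach that the cited references actually carry out: asymptotics of a fundamental system in sectors of the $\rho$-plane, analysis of the characteristic determinant under the strong regularity hypothesis to obtain separated simple eigenvalues with arithmetic-progression asymptotics, resolvent estimates yielding uniformly bounded Riesz projections, and Bari's criterion. The treatment of the integral perturbation terms as compact and the distinction you draw between strong regularity (Riesz basis) and mere regularity (basis with brackets / Abel summability) are also correct in spirit. If you were asked to reproduce Shkalikov's argument, this would be a reasonable roadmap; for the purposes of the present paper, however, no proof is expected and the theorem is simply quoted.
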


In the monograph of Naimark \cite{Naimark} the spectral
properties of differential operators generated by the differential
expression (\ref{EQ1A}) with the boundary conditions (\ref{EQ2A})
without integral terms were considered. The statement as in Theorem
\ref{TH: Shkalikov} was established in this setting, with the asymptotic formula
for the Weyl eigenvalue counting function $N(\lambda)$ in the form
\begin{equation}
\label{EQ: DistrAsymp_4} N(\lambda)\sim C \lambda^{1/m} \,\,\,\,
\hbox{as} \,\,\,\, \lambda\rightarrow+\infty.
\end{equation}

\section{Main results, Part II: very weak solutions}
\label{Sec:part2}

We now describe the notion of very weak solutions and formulate the corresponding results for distributions $a\in\Dcal'([0,T])$ and $f\in\Dcal'([0,T])\bar\otimes\Dis_{\H}$.
The first main idea is to start from the distributional coefficient $a$ and the source term $f$ to regularise them by convolution with a suitable mollifier $\psi$ obtaining families of smooth functions $(a_{\eps})_\eps$ and $(f_{\eps})_\eps$, namely
\begin{equation}\label{aEQ:regs}
a_{\eps}=a\ast\psi_{\omega(\eps)}, \,\,\, f_{\eps}=f(\cdot)\ast\psi_{\omega(\eps)},
\end{equation}
where  $\psi_{\omega(\eps)}(t)=\omega(\eps)^{-1}\psi(t/\omega(\eps))$ and $\omega(\eps)$ is a positive function converging to $0$ as $\eps\to 0$ to be chosen later.
Here $\psi$ is a Friedrichs--mollifier, i.e.  $\psi\in C^\infty_0(\mathbb R)$, $\psi\ge 0$ and $\int\psi=1$.
It turns out that the net $(a_{\eps})_\eps$ is $C^\infty$-\emph{moderate}, in the sense that its $C^\infty$-seminorms can be estimated by a negative power of $\eps$. More precisely, we will make use of the following notions of moderateness.

In the sequel, the notation $K\Subset\mathbb R$ means that $K$ is
a compact set in $\mathbb R$.
\begin{defi}
\label{adef_mod_intro}
\leavevmode
\begin{itemize}
\item[(i)]
A net of functions $(f_\eps)_\eps\in C^\infty(\mathbb R)^{(0,1]}$ is said to be $C^\infty$-moderate if for all $K\Subset\mathbb R$ and for all $\alpha\in\mathbb N_{0}$ there exist $N\in\mathbb N_{0}$ and $c>0$ such that
$$
\sup_{t\in K}|\partial^\alpha f_\eps(t)|\le c\eps^{-N-\alpha},
$$
for all $\eps\in(0,1]$.
\item[(ii)] A net of functions $(u_\eps)_\eps\in C^\infty([0,T];{H}^{s}_\H)^{(0,1]}$ is $C^\infty([0,T];{H}^{s}_\H)$-moderate if there exist $N\in\mathbb N_{0}$ and $c_k>0$ for all $k\in\mathbb N_{0}$ such that
$$
\|\partial_t^k u_\eps(t,\cdot)\|_{H^{s}_\H}\le c_k\eps^{-N-k},
$$
for all $t\in[0,T]$ and $\eps\in(0,1]$.
\item[(iii)] We say that a net of functions $(u_\eps)_\eps\in C^\infty([0,T];\Dis_{(s)})^{(0,1]}$ is $C^\infty([0,T];\Dis_{(s)})$-moderate if there exists $\eta>0$ and, for all $p\in\mathbb N_{0}$ there exist $c_p>0$ and $N_{p}>0$ such that
$$
\|\esp^{-\eta\H^{\frac{1}{2s}}} \partial_t^p u_\eps(t,\cdot)\|_{\Sp} \le c_p \eps^{-N_{p}-p},
$$
for all $t\in[0,T]$ and $\eps\in(0,1]$.
\end{itemize}
\end{defi}

We note that the conditions of moderateness are natural in the sense that regularisations of distributions are moderate, namely we can regard
\begin{equation}\label{aEQ:incls}
\textrm{ compactly supported distributions } \mathcal{E}'(\mathbb R)\subset \{C^\infty \textrm{-moderate families}\}
\end{equation}
by the structure theorems for distributions.

Thus, while a solution to the Cauchy problems may not exist in the space of distributions on the left hand side of \eqref{aEQ:incls}, it may still exist (in a certain appropriate sense)
in the space on its right hand side. The moderateness assumption will be crucial allowing to recapture the solution as in \eqref{case_1_last-est} should it exist. However, we note that regularisation with standard Friedrichs mollifiers will not be
sufficient, hence the introduction of a family $\omega(\eps)$ in the above regularisations.

We can now introduce a notion of a `very weak solution' for the Cauchy problem
\begin{equation}\label{aCPa}
\left\{ \begin{split}
\partial_{t}^{2}u(t)+a(t)\H u(t)&=f(t), \; t\in [0,T],\\
u(0)&=u_{0}\in\Sp, \\
\partial_{t}u(0)&=u_{1}\in\Sp.
\end{split}
\right.
\end{equation}

\begin{defi}
\label{adef_vws}\leavevmode
Let $s\in\mathbb R$.
\begin{itemize}
\item[(i)]The net $(u_\eps)_\eps\subset C^\infty([0,T];{H}^{s}_\H)$ is {\em a very weak solution of $H^{s}$-type} of the
Cauchy problem \eqref{aCPa} if there exist
\begin{itemize}
\item[]
$C^\infty$-moderate regularisation $a_{\eps}$ of the coefficient $a$,
\item[]
$C^\infty([0,T];{H}^{s}_\H)$-moderate regularisation $f_{\eps}(t)$ of the source term $f(t)$,
\end{itemize}
such that $(u_\eps)_\eps$ solves the regularised problem
\begin{equation}\label{aCPbb}
\left\{ \begin{split}
\partial_{t}^{2}u_{\eps}(t)+a_{\eps}(t)\H u_{\eps}(t)&=f_{\eps}(t), \; t\in [0,T],\\
u_{\eps}(0)&=u_{0}\in\Sp, \\
\partial_{t}u_{\eps}(0)&=u_{1}\in\Sp,
\end{split}
\right.
\end{equation}
for all $\eps\in(0,1]$, and is $C^\infty([0,T];{H}^{s}_\H)$-moderate.
\item[(ii)] We say that the net $(u_\eps)_\eps\subset C^\infty([0,T]; \Dis_{(s)})$ is {\em a very weak solution of $\Dis_{(s)}$-type} of the
Cauchy problem \eqref{aCPa} if there exist
\begin{itemize}
\item[]
$C^\infty$-moderate regularisation $a_{\eps}$ of the coefficient $a$,
\item[]
$C^\infty([0,T]; \Dis_{(s)})$-moderate regularisation $f_{\eps}(t)$ of the source term $f(t)$,
\end{itemize}
such that $(u_\eps)_\eps$ solves the regularised problem
\eqref{aCPbb}
for all $\eps\in(0,1]$, and is $C^\infty([0,T]; \Dis_{(s)})$-moderate.
\end{itemize}
\end{defi}
We note that according to Theorem \ref{theo_case_1} the regularised Cauchy problem
\eqref{aCPbb} has a unique solution satisfying estimate \eqref{case_1_last-est}.

\smallskip
In \cite{GR15b} Claudia Garetto and the first-named author studied weakly hyperbolic second order equations with time-dependent irregular coefficients, assuming that the coefficients are distributions. For such equations, the authors of  \cite{GR15b} introduced the notion of a `very weak solution' adapted to
the type of solutions that exist for regular coefficients. We now apply a modification of this notion to the Cauchy problem \eqref{aCPa}. In fact, in our particular setting, the condition that the distribution $a$ is nonnegative implies that it has to be a Radon measure. However, we will not be making much use of this observation, especially since we could not make the same conclusion on the behaviour of the source term $f$ with respect to $t$.

In the case of the Landau Hamiltonian with irregular (distributional) electromagnetic fields the Sobolev type very weak solutions have been constructed in \cite{RT16a} where we proved the first part of the following result in that case.

\vspace{2mm}

In the following theorem we assume that $a$ is a nonnegative or a strictly positive distribution. The strict positivity means that there exists a constant $a_{0}>0$ such that $a-a_{0}$ is a positive distribution. In other words,
$$
a\geq a_{0}>0,
$$
where $a\geq a_{0}$ means that $a-a_{0}\geq0$, i.e. $\langle a-a_{0}, \psi\rangle\geq0$ for all $\psi\in C^\infty_0(\mathbb R)$, $\psi\geq 0$.

The main results of this part of the paper can be summarised as the following solvability statement complemented by the uniqueness and consistency in Theorems \ref{atheo_consistency-1} and \ref{atheo_consistency-2}.

\begin{thm}[Existence] \leavevmode
\label{atheo_vws} Let $s\in\mathbb R$.
\begin{itemize}
\item[(II.1)] Let the coefficient $a$ of the Cauchy problem \eqref{aCPa} be a positive distribution with compact support included in $[0,T]$, such that $a\ge a_{0}$ for some constant $a_{0}>0$.
Let the Cauchy data $u_0, u_1$ be in ${H}^{s}_\H$ and the source term $f\in\Dcal'([0,T])\bar\otimes {H}^{s}_\H$.
Then the Cauchy problem \eqref{aCPa} has {\rm a very weak solution of $H^{s}$-type}.
\item[(II.2)] Let the coefficient $a$ of the Cauchy problem \eqref{aCPa} be a nonnegative distribution
with compact support included in $[0,T]$, such that $a\ge 0$.
Let the Cauchy data $u_0, u_1$ be in $\Dis_{(s)}$ and the source term $f\in\Dcal'([0,T])\bar\otimes\Dis_{(s)}$.
Then the Cauchy problem \eqref{aCPa} has {\rm a very weak solution of $\Dis_{(s)}$-type}.
\end{itemize}
\end{thm}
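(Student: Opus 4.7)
The plan is to construct the very weak solutions by directly regularising both the distributional coefficient $a$ and the distributional source term $f$, solving the resulting smooth Cauchy problems with the help of the well-posedness results in Part I, and then verifying that the constructed nets satisfy the moderateness bounds of Definition \ref{adef_mod_intro}. Concretely, fix a Friedrichs mollifier $\psi$ and a scale function $\omega(\eps)\downarrow 0$ to be chosen, and set $a_\eps=a\ast\psi_{\omega(\eps)}$ and $f_\eps(t)=f\ast_t\psi_{\omega(\eps)}(t)$, where the convolution is taken in the time variable. Since $\psi\ge 0$, the mollified coefficient inherits positivity: $a_\eps\ge a_0>0$ in case (II.1) and $a_\eps\ge 0$ in case (II.2). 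Moreover $a_\eps,f_\eps$ are smooth in $t$, so the regularised Cauchy problem \eqref{aCPbb} has a unique solution $u_\eps$ by Theorem \ref{theo_case_1} in case (II.1) and by Theorem \ref{theo_case_3} (applied with $\ell$ arbitrarily large) in case (II.2); this already produces the candidate net.

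Next I would verify the moderateness of $(a_\eps)_\eps$ and $(f_\eps)_\eps$. By the structure theorem for compactly supported distributions, $a$ has finite order, so each derivative $\partial_t^\alpha a_\eps$ is bounded by a constant times $\omega(\eps)^{-N-\alpha}$ for some $N$ independent of $\alpha$; choosing $\omega(\eps)=\eps$ gives the $C^\infty$-moderateness in Definition \ref{adef_mod_intro}(i). The same argument, but reading the distributional order in $t$ with values in the Hilbert space $\Sp$, yields moderateness of $(f_\eps)_\eps$ in $C^\infty([0,T];H^s_\H)$ for (II.1). For (II.2), one expresses an element of $\mathcal D'([0,T])\bar\otimes\Dis_{(s)}$ as a finite-order derivative in $t$ with coefficients in $\Dis_{(s)}$ and applies the characterisation of $\Dis_{(s)}$ through the weighted norms $\|\esp^{-\eta\H^{1/(2s)}}\cdot\|_{\Sp}$ in Section \ref{SEC:Prelim}; this produces the bound required in Definition \ref{adef_mod_intro}(iii) for some fixed $\eta>0$, again with $\omega(\eps)=\eps$ (or a slower $\omega$ if needed).

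It remains to prove moderateness of $(u_\eps)_\eps$. For (II.1), applying the energy estimate \eqref{case_1_last-est} of Theorem \ref{theo_case_1} to the regularised problem, one checks that the constant $C$ in that inequality depends only on $\|a_\eps\|_{C^1([0,T])}$ and on $a_0$, and therefore is bounded above by a negative power of $\eps$ by the moderateness already established. This gives the bound $\|u_\eps(t)\|_{H^{s+1}_\H}+\|\partial_t u_\eps(t)\|_{H^s_\H}\le c\,\eps^{-N'}$. To control higher time derivatives, I differentiate the equation, writing
\[
\partial_t^{k+2}u_\eps=\partial_t^k f_\eps-\sum_{j=0}^{k}\binom{k}{j}\partial_t^{j}a_\eps\,\H\,\partial_t^{k-j}u_\eps,
\]
and proceed by induction on $k$: each application of $\H$ lowers the Sobolev index by $2$, but this is absorbed into the exponent $N$ of the moderate bound, and each factor $\partial_t^j a_\eps$ contributes another negative power of $\eps$. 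Iterating this identity $k$ times produces a bound $\|\partial_t^k u_\eps(t)\|_{H^s_\H}\le c_k\eps^{-N-k}$, which is exactly the $C^\infty([0,T];H^s_\H)$-moderateness required. For (II.2) the same scheme is carried out but with the Sobolev norm replaced by the weighted Gevrey-Beurling norm $\|\esp^{-\eta\H^{1/(2s)}}\cdot\|_{\Sp}$, relying on the energy estimates underlying Theorem \ref{theo_case_3} (whose detailed proof is given in Section \ref{SEC:reduction}); because $\H$ commutes with the weight and the weight dominates any polynomial factor of $\lambda_\xi$, the induction on $k$ closes in the ultradistributional norm.

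The main technical obstacle is the choice of $\omega(\eps)$ in case (II.2). The energy estimate from Theorem \ref{theo_case_3} controls the Gevrey norm of $u_\eps$ by an expression that depends on bounds for $a_\eps$ in $C^\ell$, and these bounds grow like $\omega(\eps)^{-N-\ell}$; I must ensure that after substituting the $\eps$-dependent smooth coefficient into the Gevrey estimate, the resulting growth in $\eps$ remains only polynomial, not super-polynomial, so that the moderateness definition is met with a fixed exponent $N_p$ for each $p$. For the strictly positive case (II.1) the simple choice $\omega(\eps)=\eps$ suffices because only $C^1$ bounds of $a_\eps$ enter the estimate, but for (II.2) the balance between the growth of Gevrey constants in $\ell$ and the rate at which $\omega(\eps)^{-\ell}$ blows up must be respected, which is what fixes the admissible class of $\omega$.
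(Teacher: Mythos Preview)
There is a genuine gap in your treatment of case (II.1). You assert that with the choice $\omega(\eps)=\eps$ ``the constant $C$ in that inequality depends only on $\|a_\eps\|_{C^1([0,T])}$ and on $a_0$, and therefore is bounded above by a negative power of $\eps$.'' The dependence on $\|a_\eps\|_{C^1}$ is correct, but the conclusion is not: if you trace through the proof of Theorem \ref{theo_case_1}, the constant arises from Gronwall's lemma applied to $\partial_t E\le C_1 E + C_2|F|^2$ with $C_1\sim \|\partial_t a_\eps\|_{L^\infty}$, so it has the form
\[
C\;\sim\;\exp\bigl(cT\,\|a_\eps\|_{C^1}\bigr)\;\le\;\exp\bigl(cT\,\omega(\eps)^{-L-1}\bigr).
\]
With $\omega(\eps)=\eps$ this is $\exp(cT\,\eps^{-L-1})$, which is super-polynomial in $\eps^{-1}$, so the net $(u_\eps)_\eps$ is \emph{not} $C^\infty([0,T];H^s_\H)$-moderate and the argument collapses. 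The obstacle you correctly identify in your last paragraph for (II.2) is thus already present, in exactly the same form, in (II.1); the statement ``for the strictly positive case (II.1) the simple choice $\omega(\eps)=\eps$ suffices'' is wrong.

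The paper resolves both cases by the same device: one does \emph{not} take $\omega(\eps)=\eps$, but rather a logarithmic scale, effectively $\omega(\eps)^{-L-1}\sim\log(1/\eps)$. Then the Gronwall factor becomes
\[
\exp\bigl(c\,\omega(\eps)^{-L-1}T\bigr)\;\sim\;\exp\bigl(c'\log(1/\eps)\bigr)\;=\;\eps^{-c'},
\]
which is polynomial in $\eps^{-1}$ and yields moderateness. The same logarithmic choice is used in case (II.2) to tame the factor $\exp(C\omega(\eps)^{-L-1}\nu(\xi)^{1/\sigma})$ coming from the quasi-symmetriser estimate of Theorem \ref{theo_case_3}. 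Your sketch for (II.2) names the difficulty but does not propose this (or any) resolution; without it, neither half of the theorem goes through. The rest of your outline --- regularisation, inherited positivity, structure theorem for moderateness of $a_\eps,f_\eps$, and induction on $k$ via differentiating the equation --- matches the paper.
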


In Theorem \ref{atheo_consistency-1} we show that the very weak solution is unique in an appropriate sense.

\smallskip
But now let us formulate the theorem saying that very weak solutions recapture the classical solutions in the case the latter exist. For example, this happens under conditions of Theorem \ref{theo_case_1} and Theorem \ref{theo_case_3} (b). So, we can compare the solution given by Theorem \ref{theo_case_1} and Theorem \ref{theo_case_3} (b) with the very weak solutions in Theorem \ref{atheo_vws} under assumptions when Theorem \ref{theo_case_1} and Theorem \ref{theo_case_3} (b) hold.

\smallskip
As usual, by $L^{\infty}_{1}([0,T])$ we denote the space of bounded functions on $[0,T]$ with the derivative also in $L^{\infty}$.

\begin{thm}[Consistency-1] \leavevmode
\label{atheo_consistency-2}
\begin{itemize}
\item[(II.1)] Assume that $a\in L^{\infty}_{1}([0,T])$ is such that $a(t)\ge a_0>0$. Let $s\in\mathbb R$ and consider the Cauchy problem
\begin{equation}\label{aConsistency:EQ:1-2}
\left\{ \begin{split}
\partial_{t}^{2}u(t)+a(t)\H u(t)&=f(t), \; t\in [0,T],\\
u(0)&=u_{0}\in\Sp, \\
\partial_{t}u(0)&=u_{1}\in\Sp,
\end{split}
\right.
\end{equation}
with $(u_0,u_1)\in {H}^{1+s}_\H\times {H}^{s}_\H$ and the source term $f\in C([0,T]; {H}^{s}_\H)$. Let $u$ be a very weak solution of $H^{s}$-type of
\eqref{aConsistency:EQ:1-2}. Then for any regularising families $a_{\eps}$ and $f_{\eps}$ in Definition \ref{adef_vws}, any representative $(u_\eps)_\eps$ of $u$ converges in $C([0,T];{H}^{1+s}_\H) \cap
C^1([0,T];{H}^{s}_\H)$ as $\eps\rightarrow0$
to the unique classical solution in $C([0,T];{H}^{1+s}_\H) \cap
C^1([0,T];{H}^{s}_\H)$ of the Cauchy problem \eqref{aConsistency:EQ:1-2}
given by Theorem \ref{theo_case_1}.
\item[(II.2)] Assume that $a\in C^\ell([0,T])$ with $\ell\ge 2$ is such that $a(t)\ge 0$. Let $1\le s<1+\frac{\ell}{2}$ and consider the Cauchy problem \eqref{aConsistency:EQ:1-2}
with $u_0, u_1\in\Dis_{(s)}$ and $f\in C([0,T]; \Dis_{(s)})$. Let $u$ be a very weak solution of $\Dis_{(s)}$-type of
\eqref{aConsistency:EQ:1-2}. Then for any regularising families $a_{\eps}$ and $f_{\eps}$ in Definition \ref{adef_vws}, any representative $(u_\eps)_\eps$ of $u$ converges in $C^2([0,T];\Dis_{(s)})$ as $\eps\rightarrow0$
to the unique classical solution in $C^2([0,T];\Dis_{(s)})$ of the Cauchy problem \eqref{aConsistency:EQ:1-2}
given by Theorem \ref{theo_case_3} (b).
\end{itemize}
\end{thm}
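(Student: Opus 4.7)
My plan is to compare the representative $(u_\epsilon)_\epsilon$ of the very weak solution to the classical solution $\tilde u$ by working on the Fourier side associated to $\H$, exploiting the additional regularity of $a$ assumed in the consistency statement ($L^\infty_1$ in (II.1) and $C^\ell$ in (II.2), each well beyond the mere distributional regularity used for existence). The first step is to verify $\epsilon$-uniform control of all data. Since $\psi\ge 0$ with $\int\psi=1$, convolving with $\psi_{\omega(\epsilon)}$ preserves the sup-norm bound on $a$ (and on $\partial_t a$ in case (II.1), on all derivatives up to order $\ell$ in case (II.2), since convolution commutes with $\partial_t$), as well as the lower bound $a\ge a_0$. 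Therefore Theorem \ref{theo_case_1} (resp.\ Theorem \ref{theo_case_3}(b)) applied to the regularised problem \eqref{aCPbb} gives the corresponding energy estimate for $u_\epsilon$ with constants independent of $\epsilon$, so $(u_\epsilon)_\epsilon$ is uniformly bounded in $C([0,T]; H^{s+1}_\H) \cap C^1([0,T]; H^s_\H)$ (resp.\ in $C^2([0,T]; \Dis_{(s)})$).

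For each fixed $\xi\in\ind$, the Fourier coefficients $\widehat{u_\epsilon}(t,\xi):=(u_\epsilon(t,\cdot), e_\xi)$ and $\widehat{\tilde u}(t,\xi)$ satisfy the scalar second-order ODEs
\begin{equation*}
\partial_t^2 \widehat{u_\epsilon}(t,\xi) + a_\epsilon(t)\lambda_\xi \widehat{u_\epsilon}(t,\xi) = \widehat{f_\epsilon}(t,\xi), \qquad \partial_t^2 \widehat{\tilde u}(t,\xi) + a(t)\lambda_\xi \widehat{\tilde u}(t,\xi) = \widehat{f}(t,\xi),
\end{equation*}
with the same initial data $(\widehat{u_0}(\xi), \widehat{u_1}(\xi))$. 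Since $a_\epsilon\to a$ uniformly on $[0,T]$ and $\widehat{f_\epsilon}(\cdot,\xi) \to \widehat{f}(\cdot,\xi)$ uniformly for each $\xi$, continuous dependence of ODE solutions on coefficients and on the forcing yields $\widehat{u_\epsilon}(\cdot,\xi)\to \widehat{\tilde u}(\cdot,\xi)$ and $\partial_t \widehat{u_\epsilon}(\cdot,\xi) \to \partial_t \widehat{\tilde u}(\cdot,\xi)$ uniformly in $t$, for each $\xi$.

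To upgrade this pointwise-in-$\xi$ convergence to convergence in the Sobolev norm of (II.1), I would split each Parseval sum at a threshold $N$: the low-frequency part $\{\lambda_\xi\le N\}$ is a finite sum converging uniformly in $t$ by the previous step, while the tail $\{\lambda_\xi > N\}$ is controlled by the $\xi$-wise energy identity combined with Gronwall, which yields
\begin{equation*}
\lambda_\xi^{s+1}|\widehat{u_\epsilon}(t,\xi)|^2 + \lambda_\xi^s|\partial_t \widehat{u_\epsilon}(t,\xi)|^2 \le C\Bigl(\lambda_\xi^{s+1}|\widehat{u_0}(\xi)|^2 + \lambda_\xi^s|\widehat{u_1}(\xi)|^2 + \lambda_\xi^s \int_0^T |\widehat{f_\epsilon}(s,\xi)|^2\,ds\Bigr)
\end{equation*}
with $C$ independent of $\epsilon, \xi$. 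The initial-data tail sums are small uniformly in $\epsilon$ for $N$ large since $u_0\in H^{s+1}_\H$ and $u_1\in H^s_\H$. For the source tail, Jensen's inequality $|\widehat{f_\epsilon}(s,\xi)|^2 \le (\psi_{\omega(\epsilon)} * |\widehat{f}(\cdot,\xi)|^2)(s)$ reduces matters to a tail estimate on $\widehat f$ that is uniform in $s'\in[0,T]$ by compactness of $\{f(s'):s'\in[0,T]\}$ in $H^s_\H$ (a finite $\delta$-net argument). Combined with the analogous tail control for the fixed $\tilde u$, this completes (II.1).

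Part (II.2) follows by the same three-step scheme using Theorem \ref{theo_case_3}(b) in place of Theorem \ref{theo_case_1}. The main obstacle, particularly in (II.2), will be to verify that the weakly hyperbolic energy estimate of Theorem \ref{theo_case_3}(b) delivers an $\epsilon$-uniform choice of the Gevrey-Beurling weight $\esp^{-\eta\H^{1/(2s)}}$; this is plausible because $a_\epsilon$ is uniformly bounded in $C^\ell$ and the Gevrey range $1\le s<1+\ell/2$ is fixed, but it requires careful tracking of constants through the proof of that estimate. Once that uniform weight choice is in hand, the Jensen-plus-compactness tail control of Step 3 carries over with the exponential weight absorbed into the defining Gevrey-Beurling seminorms.
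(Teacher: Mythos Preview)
Your proposal is correct and takes a genuinely different route from the paper. The paper argues by forming the difference $\widetilde u - u_\eps$, which solves the regularised equation with coefficient $a_\eps$, zero Cauchy data, and right-hand side $n_\eps = (f - f_\eps) + (a_\eps - a)\H\widetilde u$; it then applies the energy estimate of Theorem~\ref{theo_case_1} (resp.\ the quasi-symmetriser estimate of Theorem~\ref{theo_case_3}) to the difference, using that $a_\eps - a \to 0$ in $C([0,T])$ to conclude $n_\eps \to 0$. Your approach instead establishes $\eps$-uniform bounds on $u_\eps$ (from the $\eps$-uniform $C^1$ or $C^\ell$ control on $a_\eps$), proves pointwise-in-$\xi$ convergence of Fourier coefficients via ODE continuous dependence, and then upgrades to norm convergence by a frequency cutoff plus a uniform tail bound obtained from the $\xi$-wise energy inequality and a compactness argument on the range of $f$. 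One advantage of your route is that it sidesteps a delicate point in the paper's argument: the term $(a_\eps - a)\H\widetilde u$ in $n_\eps$ lies only in $C([0,T];H^{s-1}_\H)$ under the stated hypothesis $\widetilde u \in C([0,T];H^{s+1}_\H)$, so feeding $n_\eps$ directly into the $H^s$-level energy estimate loses a derivative, whereas your tail-splitting argument never needs $\H\widetilde u$ in $H^s_\H$. The paper's approach, on the other hand, is structurally simpler (no compactness or $\delta$-net step) and in principle yields an explicit rate of convergence tied to $\|a_\eps - a\|_\infty$ and $\|f_\eps - f\|$.
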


In a similar way, we can prove additional consistency `cases' of Theorem \ref{atheo_consistency-2}, corresponding to Theorem \ref{theo_case_2} (b) and
Theorem \ref{theo_case_4} (b):

\begin{thm}[Consistency-2] \leavevmode
\label{atheo_consistency-3}
\begin{itemize}
\item[(II.3)] Assume that $a(t)\ge a_0>0$ and that $a\in C^\alpha([0,T])$ with $0<\alpha<1$. Let $1\le s<1+\frac{\alpha}{1-\alpha}$ and consider the Cauchy problem
\begin{equation}\label{aConsistency:EQ:1-2-2}
\left\{ \begin{split}
\partial_{t}^{2}u(t)+a(t)\H u(t)&=f(t), \; t\in [0,T],\\
u(0)&=u_{0}\in\Sp, \\
\partial_{t}u(0)&=u_{1}\in\Sp,
\end{split}
\right.
\end{equation}
with $u_0, u_1\in\Dis_{(s)}$ and $f\in C([0,T];\Dis_{(s)})$. Let $u$ be a very weak solution of $\Dis_{(s)}$-type of
\eqref{aConsistency:EQ:1-2-2}. Then for any regularising families $a_{\eps}$ and $f_{\eps}$ in Definition \ref{adef_vws}, any representative $(u_\eps)_\eps$ of $u$ converges in $C^2([0,T]; \Dis_{(s)})$ as $\eps\rightarrow0$
to the unique classical solution in $C^2([0,T];\Dis_{(s)})$ of the Cauchy problem \eqref{aConsistency:EQ:1-2-2}
given by Theorem \ref{theo_case_2} (b).
\item[(II.4)] Assume that $a(t)\ge 0$ and that $a\in C^\alpha([0,T])$ with $0<\alpha<2$. Let $1\le s<1+\frac{\alpha}{2}$ and consider the Cauchy problem \eqref{aConsistency:EQ:1-2-2}
with $u_0, u_1\in\Dis_{(s)}$ and $f\in C([0,T];\Dis_{(s)})$. Let $u$ be a very weak solution of $\Dis_{(s)}$-type of
\eqref{aConsistency:EQ:1-2-2}. Then for any regularising families $a_{\eps}$ and $f_{\eps}$ in Definition \ref{adef_vws}, any representative $(u_\eps)_\eps$ of $u$ converges in $C^2([0,T];\Dis_{(s)})$ as $\eps\rightarrow0$
to the unique classical solution in $C^2([0,T];\Dis_{(s)})$ of the Cauchy problem \eqref{aConsistency:EQ:1-2-2}
given by Theorem \ref{theo_case_4} (b).
\end{itemize}
\end{thm}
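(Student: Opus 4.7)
The plan is to mirror the structure of the proof of Theorem \ref{atheo_consistency-2}, adapting it from the Sobolev/$C^\ell$-regular setting to the H\"older/ultradistributional setting. Let $u$ denote the unique classical solution in $C^2([0,T];\Dis_{(s)})$ provided by Theorem \ref{theo_case_2} (b) for case (II.3) or by Theorem \ref{theo_case_4} (b) for case (II.4), and let $(u_\eps)_\eps$ be an arbitrary representative of a very weak solution of $\Dis_{(s)}$-type associated with regularisations $a_\eps = a * \psi_{\omega(\eps)}$ and $f_\eps = f * \psi_{\omega(\eps)}$. Introduce the difference $U_\eps := u_\eps - u$, which by linearity satisfies the Cauchy problem
\begin{equation*}
\left\{
\begin{split}
\partial_t^2 U_\eps(t) + a_\eps(t) \H U_\eps(t) &= g_\eps(t), \quad t\in[0,T],\\
U_\eps(0) &= 0,\\
\partial_t U_\eps(0) &= 0,
\end{split}
\right.
\end{equation*}
where $g_\eps(t) := (f_\eps(t)-f(t)) + (a(t)-a_\eps(t))\,\H u(t)$ is viewed as an element of $C([0,T];\Dis_{(s)})$.

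The first step is to quantify convergence of the regularisations: since $a\in C^\alpha([0,T])$, standard mollifier estimates give $\|a_\eps - a\|_{L^\infty} = O(\omega(\eps)^\alpha)$, and an analogous convergence $f_\eps \to f$ holds in $C([0,T];\Dis_{(s)})$ with a rate controlled by $\omega(\eps)$ and by the modulus of continuity of $f$ in $t$. Because $u\in C^2([0,T];\Dis_{(s)})$, the expansion on the Fourier side (via Section \ref{SEC:Prelim}) gives $\|\esp^{-\eta \H^{1/(2s)}} \H u(t)\|_{\Sp} < \infty$ uniformly in $t$ for some $\eta>0$; combined with the convergence of $a_\eps$, this yields that $g_\eps \to 0$ in $C([0,T]; \Dis_{(s')})$ for a slightly larger admissible ultradistribution space (i.e. with a larger $\eta$), with an explicit polynomial rate in $\omega(\eps)$.

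The second step is to apply the well-posedness estimate that underlies Theorem \ref{theo_case_2} (b), respectively Theorem \ref{theo_case_4} (b), to the problem for $U_\eps$ with coefficient $a_\eps$. Since $a_\eps$ satisfies the same sign/lower-bound hypotheses as $a$ (positivity preserved by mollification with $\psi\ge 0$, and strict positivity preserved in case (II.3)) and since the constants in those energy estimates depend only on the H\"older norm of the coefficient (not on its $C^\infty$-seminorms), applying the $\Dis_{(s)}$-energy inequality to $U_\eps$ yields a bound
\begin{equation*}
\|U_\eps\|_{C^2([0,T];\Dis_{(s)})} \le C\, \|g_\eps\|_{C([0,T];\Dis_{(s')})},
\end{equation*}
possibly after a mild enlargement of the $\eta$ parameter. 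Combined with the previous step and the choice of $\omega(\eps)$ dictated by the definition of a very weak solution, this forces $U_\eps \to 0$ in $C^2([0,T];\Dis_{(s)})$, proving convergence to the classical solution.

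The main obstacle I expect is bookkeeping the parameter $\eta$ in the $\esp^{-\eta \H^{1/(2s)}}$ weights: the classical solution $u$ is regular with some $\eta_0$, the regularisations $a_\eps,f_\eps$ are controlled in moderate norms at possibly different exponents, and the energy estimate for the difference consumes a bit of $\eta$ through the H\"older/weakly-hyperbolic inequalities (essentially because the constants in Theorems \ref{theo_case_2} and \ref{theo_case_4} depend on the eigenvalue parameter via the bound $s<1+\alpha/(1-\alpha)$ or $s<1+\alpha/2$). One must therefore work in a nested sequence of Beurling spaces and verify that the strict inequalities in \eqref{EQ:Case2-s} and \eqref{EQ:s2} leave enough room for the losses to close on the Beurling side. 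Once this is organised, the conclusion follows from the smallness of $g_\eps$ together with the moderateness of $(u_\eps)_\eps$, which implies that any representative is forced to converge since the limit is uniquely determined by the classical well-posedness.
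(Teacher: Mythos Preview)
Your proposal is correct and follows essentially the same route as the paper. The paper does not give a separate proof of this theorem: it states just before Theorem \ref{atheo_consistency-3} that the argument is ``in a similar way'' to Theorem \ref{atheo_consistency-2}, whose proof sets up the same difference equation (with source term called $n_\eps$ there, identical up to sign to your $g_\eps=(f_\eps-f)+(a-a_\eps)\H u$), applies the energy estimate from the relevant well-posedness theorem with coefficient $a_\eps$, and uses $n_\eps\to 0$ together with zero initial data to conclude. Your remark about tracking the $\eta$ parameter across nested Beurling spaces is the correct technical point corresponding to the paper's ``slight modifications'' in passing from the Sobolev to the ultradistributional case.
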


The proofs of these results will be given in Section \ref{SEC:proofs2}. The uniqueness of the very weak solutions will be formulated in Theorem \ref{atheo_consistency-1}.

\section{$\H$--Fourier analysis}
\label{SEC:Prelim}

In this section we recall the necessary elements of the global Fourier analysis
that has been developed in \cite{RT16} (also see \cite{RT16b}, and its applications to the spectral properties of operators in \cite{DRT}). The space $\Cs_{\H}^{\infty}:={\rm Dom}({\H}^{\infty})$ is called the space of test
functions for ${\H}$.
Here we define
$$
{\rm Dom}({\H}^{\infty}):=\bigcap_{k=1}^{\infty}{\rm Dom}({\H}^{k}),
$$
where ${\rm Dom}({\H}^{k})$ is the domain of the operator ${\H}^{k}$, in turn defined as
$$
{\rm Dom}({\H}^{k}):=\{f\in\Sp: \,\,\, {\H}^{j}f\in {\rm Dom}({\H}), \,\,\, j=0,
\,1, \, 2, \ldots,
k-1\}.
$$
The Fr\'echet topology of $\Cs_{{\H}}^{\infty}$ is given by the family of semi-norms
\begin{equation}\label{EQ:L-top}
\|\varphi\|_{\Cs^{k}_{{\H}}}:=\max_{j\leq k}
\|{\H}^{j}\varphi\|_{\Sp}, \quad k\in\mathbb N_0,
\; \varphi\in\Cs_{{\H}}^{\infty}.
\end{equation}

Analogously to the operator $\H^*$ ($\Sp$-conjugate to $\H$), we introduce the space $\Cs_{\H^*}^{\infty}:={\rm Dom}((\H^*)^{\infty})$ of test
functions for $\H^*$, and we define
$$
{\rm Dom}((\H^*)^{\infty}):=\bigcap_{k=1}^{\infty}{\rm Dom}((\H^*)^{k}),
$$
where ${\rm Dom}((\H^*)^{k})$ is the domain of the operator $(\H^*)^{k}$, in turn defined as
$$
{\rm Dom}((\H^*)^{k}):=\{f\in\Sp: \,\,\, (\H^*)^{j}f\in {\rm Dom}({\H^*}), \,\,\, j=0,
\,1, \, 2, \ldots,
k-1\}.
$$
The Fr\'echet topology of $C_{{\H^*}}^{\infty}$ is given by the family of semi-norms
\begin{equation}\label{EQ:L-top2}
\|\varphi\|_{\Cs^{k}_{{\H^*}}}:=\max_{j\leq k}
\|(\H^*)^{j}\varphi\|_{\Sp}, \quad k\in\mathbb N_0,
\; \varphi\in C_{{\H^*}}^{\infty}.
\end{equation}

The space $$\Dis_{{\H}}:=\mathcal L(\Cs_{\H^*}^{\infty},
\mathbb C)$$ of linear continuous functionals on
$\Cs_{\H^*}^{\infty}$ is called the space of
${\H}$-distributions.
We can understand the continuity here in terms of the topology
\eqref{EQ:L-top2}.
For
$w\in\Dis_{{\H}}$ and $\varphi\in\Cs_{\H^*}^{\infty}$,
we shall write
$$
w(\varphi)=\langle w, \varphi\rangle.
$$
For any $\psi\in\Cs_{\H}^{\infty}$, the functional
$$
\Cs_{\H^*}^{\infty}\ni \varphi\mapsto (\psi, \varphi)
$$
is an ${\H}$-distribution, which gives an embedding $\psi\in\Cs_{{\H}}^{\infty}\hookrightarrow\Dis_{\H}$.

Now the space $$\Dis_{{\H^*}}:=\mathcal L(\Cs_{\H}^{\infty},
\mathbb C)$$ of linear continuous functionals on
$\Cs_{\H}^{\infty}$ is called the space of
${\H^*}$-distributions.
We can understand the continuity here in terms of the topology
\eqref{EQ:L-top}.
For
$w\in\Dis_{{\H^*}}$ and $\varphi\in\Cs_{\H}^{\infty}$,
we shall also write
$$
w(\varphi)=\langle w, \varphi\rangle.
$$
For any $\psi\in\Cs_{\H^*}^{\infty}$, the functional
$$
\Cs_{\H}^{\infty}\ni \varphi\mapsto (\psi, \varphi)
$$
is an ${\H^*}$-distribution, which gives an embedding $\psi\in\Cs_{{\H^*}}^{\infty}\hookrightarrow\Dis_{\H^*}$.

Since the system of eigenfunctions $\{e_{\xi}: \; \xi\in\ind\}$ of the operator $\H$ is a basis in $\Sp$ then its biorthogonal system $\{e_{\xi}^{\ast}: \; \xi\in\ind\}$ is also a basis in $\Sp$ (see e.g. Bari \cite{bari}, as well as Gelfand \cite{Gelfand:analysis-PDEs-UMN-1963}). Note that the function $e_{\xi}^{\ast}$ is an eigenfunction of the operator $\H^*$ corresponding to the eigenvalue $\overline{\lambda}_{\xi}$ for each $\xi\in\ind$.
They satisfy the orthogonality relations
$$(e_{\xi},e_{\eta}^{*})=\delta_{\xi\eta},$$
where $\delta_{\xi\eta}$ is the Kronecker delta.

Let $\mathcal S(\ind)$ denote the space of rapidly decaying
functions $\varphi:\ind\rightarrow\mathbb C$. That is,
$\varphi\in\mathcal S(\ind)$ if for any $m<\infty$ there
exists a constant $C_{\varphi, m}$ such that
$$
|\varphi(\xi)|\leq C_{\varphi, m}\langle\xi\rangle^{-m}
$$
holds for all $\xi\in\ind$, where we denote
$$\langle\xi\rangle:=(1+|\lambda_{\xi}|)^{1/2}.$$

The topology on $\mathcal
S(\ind)$ is given by the seminorms $p_{k}$, where
$k\in\mathbb N_{0}$ and
$$
p_{k}(\varphi):=\sup_{\xi\in\ind}\langle\xi\rangle^{k}|\varphi(\xi)|.
$$

We now define the $\H$-Fourier transform on $\Cs_{\H}^{\infty}$ as the mapping
$$
(\mathcal F_{\H}f)(\xi)=(f\mapsto\widehat{f}):
\Cs_{\H}^{\infty}\rightarrow\mathcal S(\ind)
$$
by the formula
\begin{equation}
\label{FourierTr}
\widehat{f}(\xi):=(\mathcal F_{\H}f)(\xi)=(f, e_{\xi}^{*}),
\end{equation}
and define the $\H^*$-Fourier transform on $\Cs_{\H^*}^{\infty}$ as the mapping
$$
(\mathcal F_{\H^*}g)(\xi)=(g\mapsto\widehat{g}_{\ast}):
\Cs_{\H^{*}}^{\infty}\rightarrow\mathcal S(\ind)
$$
by the formula
\begin{equation}
\label{FourierTr}
\widehat{g}_{*}(\xi):=(\mathcal F_{\H^{*}}g)(\xi)=(g, e_{\xi}).
\end{equation}

The $\H$-Fourier transform
$\mathcal F_{\H}$ is a bijective homeomorphism from $\Cs_{{\H}}^{\infty}$ to $\mathcal
S(\ind)$.
Its inverse  $$\mathcal F_{\H}^{-1}: \mathcal S(\ind)
\rightarrow \Cs_{\H}^{\infty}$$ is given by
\begin{equation}
\label{InvFourierTr} (\mathcal F^{-1}_{{\H}}h)=\sum_{\xi\in\ind} h(\xi) e_{\xi},\quad
h\in\mathcal S(\ind),
\end{equation}
so that the Fourier inversion formula becomes
\begin{equation}
\label{InvFourierTr0}
f=\sum_{\xi\in\ind} \widehat{f}(\xi)e_{\xi}
\quad \textrm{ for all } f\in\Cs_{{\H}}^{\infty}.
\end{equation}

Also the $\H^*$-Fourier transform
$\mathcal F_{\H^*}$ is a bijective homeomorphism from $\Cs_{{\H^*}}^{\infty}$ to $\mathcal
S(\ind)$.
Its inverse  $$\mathcal F_{\H^*}^{-1}: \mathcal S(\ind)
\rightarrow \Cs_{\H^*}^{\infty}$$ is given by
\begin{equation}
\label{InvFourierTr2} (\mathcal F^{-1}_{{\H^*}}g)=\sum_{\xi\in\ind} g(\xi) e^{*}_{\xi},\quad
g\in\mathcal S(\ind),
\end{equation}
so that the $\H^{*}$-Fourier inversion formula becomes
\begin{equation}
\label{InvFourierTr0}
h=\sum_{\xi\in\ind} \widehat{h}_{*}(\xi)e^{*}_{\xi}
\quad \textrm{ for all } h\in \Cs_{{\H{*}}}^{\infty}.
\end{equation}

The Plancherel's identity takes the form
\begin{equation}\label{EQ:Plancherel}
\|f\|_{\Sp}=\p{\sum_{\xi\in\ind}
\widehat{f}(\xi)\overline{\widehat{f}_{*}(\xi)}}^{1/2}.
\end{equation}

We note that since systems $\{e_\xi\}$ and $\{e^{\ast}_\xi\}$ are Riesz bases,
we can also compare $\Sp$--norms of functions with sums of squares of Fourier coefficients.
The following statement follows from the work of Bari \cite[Theorem 9]{bari}:

\begin{lemma}\label{LEM: FTl2}
There exist constants $k,K,m,M>0$ such that for every $f\in\Sp$
we have
$$
m^2\|f\|_{\Sp}^2 \leq \sum_{\xi\in\ind} |\widehat{f}(\xi)|^2\leq M^2\|f\|_{\Sp}^2
$$
and
$$
k^2\|f\|_{\Sp}^2 \leq \sum_{\xi\in\ind} |\widehat{f}_*(\xi)|^2\leq K^2\|f\|_{\Sp}^2.
$$
\end{lemma}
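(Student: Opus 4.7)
The lemma is essentially a translation of the Riesz basis characterization into Fourier-analytic language, so the plan is to unpack the relevant definitions and invoke Bari's theorem as cited.

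First, I would recall the standard characterization of Riesz bases, which is one of the equivalences in Bari's theorem: a complete system $\{e_\xi\}$ in a Hilbert space $\Sp$ is a Riesz basis if and only if there exist constants $0 < A \le B < \infty$ such that
\begin{equation*}
A \sum_{\xi\in\ind} |c_\xi|^2 \;\leq\; \Bigl\|\sum_{\xi\in\ind} c_\xi e_\xi\Bigr\|_{\Sp}^2 \;\leq\; B \sum_{\xi\in\ind} |c_\xi|^2
\end{equation*}
for every finite sequence $(c_\xi)$, and hence by density for every $(c_\xi)\in\ell^2(\ind)$. Equivalently, there is a bounded invertible operator $T$ on $\Sp$ and an orthonormal basis $\{\phi_\xi\}$ with $e_\xi = T\phi_\xi$.

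Next, I would use the biorthogonality to identify the Fourier coefficients with the expansion coefficients. Since $\{e_\xi\}$ is a basis, any $f\in\Sp$ admits a unique expansion $f=\sum_\xi c_\xi e_\xi$; pairing with $e_\eta^\ast$ and using $(e_\xi,e_\eta^\ast)=\delta_{\xi\eta}$ gives $c_\xi = (f,e_\xi^\ast) = \widehat f(\xi)$. Substituting into the Riesz basis inequality yields
\begin{equation*}
A \sum_{\xi\in\ind} |\widehat f(\xi)|^2 \;\leq\; \|f\|_{\Sp}^2 \;\leq\; B \sum_{\xi\in\ind} |\widehat f(\xi)|^2,
\end{equation*}
from which the first pair of inequalities follows with $m^2 = 1/B$ and $M^2=1/A$.

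For the second pair, the key observation is that the biorthogonal system $\{e_\xi^\ast\}$ of a Riesz basis is itself a Riesz basis. This is standard: writing $e_\xi = T\phi_\xi$, the biorthogonality relations force $e_\xi^\ast = (T^\ast)^{-1}\phi_\xi$, and since $(T^\ast)^{-1}$ is bounded and invertible, $\{e_\xi^\ast\}$ inherits the Riesz basis property with constants $A',B'>0$. Then every $f\in\Sp$ expands as $f = \sum_\xi d_\xi e_\xi^\ast$, and pairing with $e_\eta$ identifies $d_\xi = (f,e_\xi) = \widehat f_\ast(\xi)$. The Riesz basis inequality for $\{e_\xi^\ast\}$ then delivers the desired bounds with $k^2 = 1/B'$ and $K^2=1/A'$.

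There is no real obstacle here: the whole proof is a matter of citing Bari's characterization, noting that the dual system of a Riesz basis is again a Riesz basis, and combining these with the biorthogonality identities $\widehat f(\xi)=(f,e_\xi^\ast)$ and $\widehat f_\ast(\xi)=(f,e_\xi)$. The only subtle point worth stating cleanly is the passage from the expansion coefficients to the pairings with the biorthogonal system, which is what turns the abstract Riesz basis inequality into the Plancherel-type two-sided estimate claimed in the lemma.
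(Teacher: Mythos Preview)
Your proposal is correct and matches the paper's approach: the paper does not give a detailed proof but simply states that the lemma ``follows from the work of Bari [Bar51, Theorem 9]'', having noted just before that $\{e_\xi\}$ and $\{e_\xi^\ast\}$ are Riesz bases. Your write-up is precisely the unpacking of that citation---the Riesz basis inequality, the identification of expansion coefficients with $\widehat f(\xi)$ and $\widehat f_\ast(\xi)$ via biorthogonality, and the fact that the biorthogonal system of a Riesz basis is again a Riesz basis---so there is nothing to compare beyond noting that you have supplied the details the paper leaves to the reference.
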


Hence, Lemma \ref{LEM: FTl2} shows that
$$
\|f\|_{1, \Sp}:=\sqrt{(f, f)}=\p{\sum_{\xi\in\ind}
\widehat{f}(\xi)\overline{\widehat{f}_{*}(\xi)}}^{1/2}
$$
and
$$
\|f\|_{2, \Sp}:=\p{\sum_{\xi\in\ind} |\widehat{f}(\xi)|^2}^{1/2}
$$
and
$$
\|f\|_{3, \Sp}:=\p{\sum_{\xi\in\ind} |\widehat{f}_*(\xi)|^2}^{1/2}
$$
are equivalent norms. Indeed, we could use any of them.

\vspace{3mm}

Now we are going to introduce Sobolev spaces induced by the operator $\H$. For this aim we will use $\|\cdot\|_{2, \Sp}$--norm and, briefly, write $\|\cdot\|_{\Sp}$. In fact, it does not matter what norm we use because, as a result, we get equivalent Sobolev norms.

In general, given a linear continuous operator $L:\Cs^{\infty}_{\H}\to
\Cs^{\infty}_{\H}$
(or even $L:\Cs^{\infty}_{\H}\to\Dis_{\H}$), under the condition that $e_{\xi}$ does not have zeros, we can define its symbol by
$\sigma_{L}(\xi):=e_{\xi}^{-1} (L e_\xi)$.
In this case it holds that
\begin{equation}\label{EQ:T-op}
Lf=\sum_{\xi\in\ind} \sigma_{L}(\xi) \, \widehat{f}(\xi) \, e_\xi.
\end{equation}
The correspondence between operators and symbols is one-to-one.
The quantization \eqref{EQ:T-op} has been extensively studied in
\cite{Ruzhansky-Turunen:BOOK,Ruzhansky-Turunen:IMRN} in the setting of compact Lie groups, and in
\cite{RT16} in the setting of (non-self-adjoint) boundary value problems, to which we
may refer for its properties and for the corresponding symbolic calculus.
The condition that $e_{\xi}$ do not have zeros can be removed in some sense, see \cite{RT16b}.
However, in this paper we do not need such technicalities since we already know the symbols of all the appearing operators.



\smallskip
Consequently, we can also define Sobolev spaces $H^s_\H$ associated to
$\H$. Thus, for any $s\in\Rr$, we set
\begin{equation}\label{EQ:HsL}
H^s_\H:=\left\{ f\in\Dis_{\H}: \H^{s/2}f\in
\Sp\right\},
\end{equation}
with the norm $\|f\|_{H^s_\H}:=\|\H^{s/2}f\|_{\Sp}$, which, using
Lemma \ref{LEM: FTl2}, we understand as
\begin{equation*}\label{EQ:Hsub-norm}
\|f\|_{H^s_\H}:=\|\H^{s/2}f\|_{\Sp}:=
\p{\sum_{\xi\in\ind} |\sigma_\H(\xi)|^{s} |\widehat{f}(\xi)|^{2}}^{1/2},
\end{equation*}
justifying the expression \eqref{EQ:Sob1} since $\sigma_\H(\xi)=\lambda_{\xi}$.

The (Roumieu) Gevrey space $\gamma^s_\H$ mentioned in \eqref{DEF:GevL}
is defined by the formula

\begin{multline}\label{DEF:GevL-2}
f\in \gamma^s_\H \Longleftrightarrow
\exists A>0: \\
\| \esp^{A\H^{\frac{1}{2s}}} f\|_{\Sp}^2 := \sum_{\xi\in\ind}
|\esp^{A |\sigma_{\H}(\xi)|^{\frac{1}{2s}}} \widehat{f}(\xi)|^2
= \sum_{\xi\in\ind}
\esp^{2A|\sigma_{\H}(\xi)|^{\frac{1}{2s}}}
|\widehat{f}(\xi)|^2
<\infty.
\end{multline}

Also, define the Beurling Gevrey space $\gamma^{(s)}_\H$ by the formula

\begin{multline}\label{DEF:GevL-2}
g\in \gamma^{(s)}_\H \Longleftrightarrow
\forall B>0: \\
\| \esp^{B\H^{\frac{1}{2s}}} g\|_{\Sp}^2 := \sum_{\xi\in\ind}
|\esp^{B |\sigma_{\H}(\xi)|^{\frac{1}{2s}}} \widehat{g}(\xi)|^2
= \sum_{\xi\in\ind}
\esp^{2B|\sigma_{\H}(\xi)|^{\frac{1}{2s}}}
|\widehat{g}(\xi)|^2
<\infty.
\end{multline}


In the case when $\H$ is the Laplacian (or, more generally, a positive elliptic pseudo-differential operator) on a closed manifold it was shown in \cite{DR16} that these spaces coincide with the usual Gevrey spaces defined in local coordinates.

We denote by $\Dis_{s}$ and $\Dis_{(s)}$ the spaces of linear continuous functionals on $\gamma^s_\H$ and $\gamma^{(s)}_\H$, respectively. We call them the Gevrey Roumieu ultradistributions and the Gevrey Beurling ultradistributions, respectively.
Then, Proposition 13 in \cite{GR:11} can be easily adapted to our case. Hence, the following Fourier characterisations of duals of $\gamma^{s}_{\H}$ and $\gamma^{(s)}_{\H}$ are valid:

\begin{cor}
\label{Fourier character--1}
We have $u\in\Dis_{s}$ if and only if for any $\delta>0$ there exists $C_{\delta}$ such that
$$
|\widehat{u}(\xi)|\leq C_{\delta} e^{\delta|\sigma_{\H}(\xi)|^{\frac{1}{2s}}}
$$
for all $\xi\in\ind$. Similarly, a real functional $u$ belongs to $\Dis_{(s)}$ if and only if there are constants $\eta>0$ and $C>0$ such that
$$
|\widehat{u}(\xi)|\leq C e^{\eta|\sigma_{\H}(\xi)|^{\frac{1}{2s}}}
$$
for all $\xi\in\ind$.
\end{cor}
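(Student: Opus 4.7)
The plan is to adapt Proposition~13 of \cite{GR:11} to the biorthogonal (non-harmonic) Fourier setting here, exploiting the locally convex structures of $\gamma^s_\H$ as an inductive limit and of $\gamma^{(s)}_\H$ as a Fr\'echet (projective limit) space, together with the Fourier characterisations in \eqref{DEF:GevL-2}. The Fourier coefficient of an ultradistribution $u$ is understood via the duality pairing extending Section~\ref{SEC:Prelim}, so that $\widehat{u}(\xi):=\langle u,e_\xi^{*}\rangle$ whenever $e_\xi^{*}$ is an admissible test element.

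For the Roumieu case, I realise $\gamma^s_\H=\bigcup_{A>0}\gamma^{s,A}_\H$ as an inductive limit, where $\gamma^{s,A}_\H:=\{f\in\Sp : \|\esp^{A\H^{1/(2s)}}f\|_\Sp<\infty\}$. Continuity of $u\in\Dis_s$ on this inductive limit is equivalent to: for every $\delta>0$ there exists $C_\delta$ with $|u(f)|\le C_\delta\,\|\esp^{\delta\H^{1/(2s)}}f\|_\Sp$. Testing against $f=e_\xi^{*}$, on which $\H^{1/(2s)}$ acts as $|\sigma_\H(\xi)|^{1/(2s)}$, and using the uniform bound $\|e_\xi^{*}\|_\Sp\le K$ from Lemma~\ref{LEM: FTl2}, I obtain $|\widehat{u}(\xi)|\le C_\delta\,\esp^{\delta|\sigma_\H(\xi)|^{1/(2s)}}$. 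Conversely, starting from this growth bound I define a candidate $u$ on $\gamma^s_\H$ by $u(f):=\sum_{\xi\in\ind}\widehat{u}(\xi)\,\widehat{f}(\xi)$. If $f\in\gamma^s_\H$ with $\sum_\xi \esp^{2A|\sigma_\H(\xi)|^{1/(2s)}}|\widehat{f}(\xi)|^{2}<\infty$ for some $A>0$, I pick any $\delta<A$ and split $\widehat{u}(\xi)\widehat{f}(\xi)=\bigl(\widehat{u}(\xi)\esp^{-A|\sigma_\H(\xi)|^{1/(2s)}}\bigr)\bigl(\esp^{A|\sigma_\H(\xi)|^{1/(2s)}}\widehat{f}(\xi)\bigr)$; the first factor is dominated by $C_\delta\,\esp^{-(A-\delta)|\sigma_\H(\xi)|^{1/(2s)}}$ and hence is $\ell^{2}$ whenever $\lambda_\xi\to\infty$ fast enough, and Cauchy--Schwarz then delivers absolute convergence and continuity on $\gamma^{s,A}_\H$.

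The Beurling case is dual and proceeds identically, using now the Fr\'echet topology of $\gamma^{(s)}_\H$ defined by the family of semi-norms $\|\esp^{B\H^{1/(2s)}}f\|_\Sp$, $B>0$. Continuity of $u\in\Dis_{(s)}$ is equivalent to the existence of \emph{some} pair $(\eta,C)$ with $|u(f)|\le C\,\|\esp^{\eta\H^{1/(2s)}}f\|_\Sp$; inserting $f=e_\xi^{*}$ once more yields $|\widehat{u}(\xi)|\le C\,\esp^{\eta|\sigma_\H(\xi)|^{1/(2s)}}$. For the converse, given this bound with a fixed $\eta$, I pick any $B>\eta$ and re-run the Cauchy--Schwarz split above to verify continuity of $u$ with respect to the $B$-semi-norm.

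The main delicate point, and the one I would address most carefully, is making sense of the pairing $\widehat{u}(\xi)=\langle u,e_\xi^{*}\rangle$ when $\H$ is non-self-adjoint: $e_\xi^{*}$ is then an eigenfunction of $\H^{*}$ rather than of $\H$, and one must justify that it is admissible as a test element for $u\in\Dis_s$ or $\Dis_{(s)}$. Since the Gevrey norms only see $|\sigma_\H(\xi)|=|\lambda_\xi|=|\bar\lambda_\xi|$, this reduces to the Riesz-basis statement of Lemma~\ref{LEM: FTl2} placing $\{e_\xi^{*}\}$ in the correct test space. A secondary technicality is the convergence of the auxiliary series $\sum_\xi\esp^{-c|\sigma_\H(\xi)|^{1/(2s)}}$ used in the Cauchy--Schwarz bound, which holds in all the concrete examples of Section~\ref{SEC:examples} via Weyl-type asymptotics. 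Once these two points are resolved, the corollary reduces to a direct transcription of Proposition~13 of \cite{GR:11}.
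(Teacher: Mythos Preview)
Your proposal is correct and follows exactly the route the paper indicates: the paper's own ``proof'' of this corollary is simply the sentence that Proposition~13 in \cite{GR:11} can be easily adapted, and your sketch is precisely such an adaptation, carried out in the appropriate inductive/projective limit framework. You in fact go further than the paper by explicitly flagging the two genuine technicalities (the admissibility of $e_\xi^{*}$ as a test element in the non-self-adjoint setting, and the summability of $\sum_\xi \esp^{-c|\sigma_\H(\xi)|^{1/(2s)}}$), neither of which the paper addresses.
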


Again, let us use the Plancherel identity. The Gevrey Roumieu ultradistributions $\Dis_{s}$ and the Gevrey Beurling ultradistributions $\Dis_{(s)}$ can be characterized by
\begin{multline}\label{DEF:Distr-GevL-1}
u\in\Dis_{s} \Longleftrightarrow
\forall \delta>0: \\
\| \esp^{-\delta\H^{\frac{1}{2s}}} u\|_{\Sp}^2 := \sum_{\xi\in\ind}
|\esp^{-\delta |\sigma_{\H}(\xi)|^{\frac{1}{2s}}} \widehat{u}(\xi)|^2
= \sum_{\xi\in\ind}
\esp^{-2\delta|\sigma_{\H}(\xi)|^{\frac{1}{2s}}}
|\widehat{u}(\xi)|^2
<\infty,
\end{multline}
and
\begin{multline}\label{DEF:Distr-GevL-2}
u\in\Dis_{(s)} \Longleftrightarrow
\exists \eta>0: \\
\| \esp^{-\eta\H^{\frac{1}{2s}}} u\|_{\Sp}^2 := \sum_{\xi\in\ind}
|\esp^{-\eta |\sigma_{\H}(\xi)|^{\frac{1}{2s}}} \widehat{u}(\xi)|^2
= \sum_{\xi\in\ind}
\esp^{-2\eta|\sigma_{\H}(\xi)|^{\frac{1}{2s}}}
|\widehat{u}(\xi)|^2
<\infty,
\end{multline}
respectively.

\section{Proofs of Part I: Theorems \ref{theo_case_1}--\ref{theo_case_4}}
\label{SEC:reduction}

The operator $\H$  has the symbol
\begin{equation}
\sigma_{\H}(\xi)=\lambda_{\xi},
\end{equation}
taking the $\H$-Fourier transform of
\eqref{CPa}, we obtain the collection of Cauchy problems for
$\H$--Fourier coefficients:
\begin{equation}\label{CPa-FC}
\partial_{t}^{2}\widehat{u}(t,\xi)+a(t)\sigma_{\H}(\xi)\widehat{u}(t,\xi)=\widehat{f}(t, \xi),
\; \xi\in\ind.
\end{equation}
The main point of our further analysis is that we can make an individual
treatment of the equations in \eqref{CPa-FC}.
Thus, let us fix $\xi\in\ind$,
we then study the Cauchy problem
\begin{equation}\label{EQ:WE-v}
\partial_{t}^{2} \widehat{u}(t,\xi)+ a(t) \sigma_{\H}(\xi)
 \widehat{u}(t,\xi)=\widehat{f}(t, \xi),\quad
 \widehat{u}(0,\xi)=\widehat{u}_{0}(\xi), \;
 \partial_{t}\widehat{u}(0,\xi)=\widehat{u}_{1}(\xi),
\end{equation}
with $\xi$ being a parameter, and want to derive estimates
for $\widehat{u}(t,\xi)$. Combined with the characterisation \eqref{EQ:Sob1} of
Sobolev spaces this will yield the well-posedness
results for the original Cauchy problem \eqref{CPa}.

By setting
\begin{equation}\label{EQ:SETTING}
\nu^2(\xi):=|\sigma_{\H}(\xi)|,
\end{equation}
the equation in \eqref{EQ:WE-v} can be written as
\begin{equation}
\label{eq_xi}
\partial_{t}^{2} \widehat{u}(t,\xi)+a(t)\nu^2(\xi)
\widehat{u}(t,\xi)=\widehat{f}(t, \xi).
\end{equation}

\vspace{3mm}

We now proceed with a standard reduction to a first order system of this equation and define the corresponding energy. The energy estimates will be given in terms of $t$ and $\nu(\xi)$.

We can now do the natural energy construction for \eqref{eq_xi}. We use the transformation
\[
\begin{split}
V_1&:=i\nu(\xi)\widehat{u},\\
V_2&:= \partial_t \widehat{u}.
\end{split}
\]
It follows that the equation \eqref{eq_xi} can be written as the first order system
\begin{equation}\label{EQ:system}
\partial_t V(t,\xi)=i\nu(\xi) A(t)V(t,\xi)+F(t, \xi),
\end{equation}
where $V$ is the column vector with entries $V_1$ and $V_2$,
$$
F(t, \xi)=\left(
    \begin{array}{c}
      0\\
      \widehat{f}(t, \xi)
     \end{array}
  \right),
$$
and
$$
A(t)=\left(
    \begin{array}{cc}
      0 & 1\\
      a(t) & 0 \\
           \end{array}
  \right).
$$
The initial conditions $\widehat{u}(0,\xi)=\widehat{u}_{0}(\xi)$,
$\partial_{t}\widehat{u}(0,\xi)=\widehat{u}_{1}(\xi)$
are transformed into
\[
V(0,\xi)=\left(
    \begin{array}{c}
      i\nu(\xi) \widehat{u}_0(\xi)\\
      \widehat{u}_{1}(\xi)
     \end{array}
  \right).
\]
Note that the matrix $A$ has eigenvalues $\pm\sqrt{a(t)}$ and symmetriser
$$
S(t)=\left(
    \begin{array}{cc}
      a(t) & 0\\
      0 & 1 \\
           \end{array}
  \right).
$$
By definition of the symmetriser we have that
$$
SA-A^\ast S=0.
$$
It is immediate to see that
\begin{equation}
\label{est_sym}
\min_{t\in[0,T]}(a(t),1)|V|^2\le (SV,V)\le \max_{t\in[0,T]}(a(t),1)|V|^2,
\end{equation}
where $(\cdot,\cdot)$ and $|\cdot|$ denote the inner product and the norm in $\C$, respectively.

\subsection{Case I.1: Proof of Theorem \ref{theo_case_1}}
Since $a(t)>0$, $a\in C^1([0,T])$, it is clear that there exist constants $a_0>0$ and $a_1>0$ such
that
\[
a_0=\min_{t\in[0,T]}a(t)
\; \textrm{ and } \;
a_1=\max_{t\in[0,T]}{a(t)}.
\]
Hence \eqref{est_sym} implies,
\begin{equation}
\label{est_sym_1}
c_0|V|^2=\min(a_0,1)|V|^2\le (SV,V)\le \max(a_1,1)|V|^2=c_1|V|^2,
\end{equation}
with $c_0,c_1>0$.
We then define the energy
$E(t,\xi):=(S(t)V(t,\xi),V(t,\xi)),$ and
\begin{align*}
\partial_t E(t,\xi)&=(\partial_t SV,V)+(S \partial_t V,V)
+(SV, \partial_t V) \\
&=(\partial_t SV,V)+i \nu(\xi) (SAV,V)+(SF,V)-i \nu(\xi) (SV,AV)+(SV,F) \\
&=(\partial_t SV,V)+i \nu(\xi) ((SA-A^{*}S)V,V)+2\mathrm{Re}(SF,V) \\
&=(\partial_t SV,V)+2\mathrm{Re}(SF,V) \\
&\le (\Vert \partial_t S\Vert+1) |V|^2+ \Vert SF \Vert^{2} \\
&\le \mathrm{max}(\Vert \partial_t S\Vert+1, \Vert S \Vert^{2}) (|V|^2+|F|^{2}) \\
&\le C_{1} E(t,\xi)+ C_{2} |F|^{2}
\end{align*}
with some constants $C_{1}$ and $C_{2}$. An application of Gronwall's lemma combined with the estimates \eqref{est_sym_1} implies
\begin{equation}
\label{E_1}
|V|^{2}\leq c_{0}^{-1} E(t,\xi)\leq C_{1} |V_{0}|^{2}+ C_{2} \sup_{0\leq t\leq T}|F(t, \xi)|^{2},
\end{equation}
which is valid for all $t\in[0, T]$ and $\xi$ with `new' constants $C_{1}$ and $C_{2}$ depending on $T$. Hence
\[
\nu^{2}(\xi) |\widehat{u}(t,\xi)|^2+|\partial_t \widehat{u}(t,\xi)|^2
\le C_1'( \nu^{2}(\xi) |\widehat{u}_0(\xi)|^2+|\widehat{u}_1(\xi)|^2+\sup_{0\leq t\leq T}|\widehat{f}(t, \xi)|^{2}).
\]
Recalling the notation
$\nu^{2}(\xi)=|\sigma_{\H}(\xi)|$, this means
\begin{equation}
\label{case_1_est_mn} |\sigma_{\H}(\xi)|
|\widehat{u}(t,\xi)|^2+|\partial_t \widehat{u}(t,\xi)|^2
\le C_1'(|\sigma_{\H}(\xi)|
|\widehat{u}_{0}(\xi)|^2+|\widehat{u}_{1}(\xi)|^2+\sup_{0\leq t\leq T}|\widehat{f}(t, \xi)|^{2})
\end{equation}
for all $t\in[0,T]$, $\xi\in\ind$, with the
constant $C_1'$ independent of $\xi$.
Now we recall that by Plancherel's equality, we have
$$
\|\partial_t u(t,\cdot)\|_{\Sp}^2=\sum_{\xi\in\ind}|\partial_t \widehat{u}(t,\xi)|^{2},
$$
$$
\|\H^{1/2} u(t,\cdot)\|_{\Sp}^2=\sum_{\xi\in\ind} |\sigma_\H(\xi)|
|\widehat{u}(t,\xi)|^{2}
$$
and
$$
\|f\|_{C([0, T], \Sp)}^2=\sum_{\xi\in\ind} \|\widehat{f}(\cdot, \xi)\|_{C[0, T]}^{2},
$$
where
$$
\|\widehat{f}(\cdot, \xi)\|_{C[0, T]}^{2}=\sup_{0\leq t\leq T}| \widehat{f}(t, \xi)|^{2}.
$$
Hence, the estimate \eqref{case_1_est_mn} implies that
\begin{equation}
\label{case_1_last}
\|\H^{1/2} u(t,\cdot)\|_{\Sp}^2+\|\partial_t u(t,\cdot)\|_{\Sp}^2\leq
C (\|\H^{1/2} u_0\|_{\Sp}^2+\|u_1\|_{\Sp}^2+\|f\|_{C([0, T], \Sp)}^2),
\end{equation}
where the constant $C>0$ does not depend on $t\in[0,T]$. More
generally, multiplying \eqref{case_1_est_mn} by powers of
$|\sigma_{\H}(\xi)|$, for any $s$, we get
\begin{multline}
\label{case_1_est_mn2} |\sigma_{\H}(\xi)|^{1+s}
|\widehat{u}(t,\xi)|^2+|\sigma_{\H}(\xi)|^{s} |\partial_t
\widehat{u}(t,\xi)|^2 \\
\le C_1'(|\sigma_{\H}(\xi)|^{1+s}
|\widehat{u}_0(\xi)|^2+|\sigma_{\H}(\xi)|^{s}
|\widehat{u}_1(\xi)|^2
+|\sigma_{\H}(\xi)|^{s}\sup_{0\leq t\leq T}|\widehat{f}(t, \xi)|^{2}).
\end{multline}
Taking the sum over $\xi$ as above, this yields
the estimate \eqref{case_1_last-est}.

\subsection{Case I.2: Proof of Theorem \ref{theo_case_2} (a)}

Now, assume that $a(t)\ge a_0>0$ but here the regularity of $a$ is less than $C^1$, i.e.,  $a\in C^\alpha([0,T])$,
with $0<\alpha<1$. Following the notation \eqref{EQ:SETTING} and
as in \cite{GR15} we look for a solution of the system \eqref{EQ:system}, i.e. of
\begin{equation}
\label{system_A}
\partial_t V(t,\xi)={\rm i}\nu(\xi) A(t, \xi)V(t,\xi)+F(t, \xi)
\end{equation}
with
$$
A(t, \xi)=\left(
    \begin{array}{cc}
      0 & 1\\
      a(t)
      & 0 \\
           \end{array}
  \right)
$$
and
$$
F(t, \xi)=\left(
    \begin{array}{c}
      0\\
      \widehat{f}(t, \xi)
     \end{array}
  \right)
$$
of the following form
\[
V(t,\xi)={\rm e}^{-\rho(t)\nu^{1/s}(\xi)}(\det H)^{-1}HW,
\]
where
$\rho\in C^1([0,T])$ is a real-valued function which will be suitably chosen in the sequel,
$W=W(t,\xi)$ is to be determined,
\[
H(t)=\left(
    \begin{array}{cc}
      1 & 1 \\
      -\lambda(t)
      & \lambda(t) 
         \end{array}
  \right),
\]
and, for $\varphi\in C^\infty_c(\mathbb{R})$, $\varphi\ge 0$ with integral $1$,
\begin{equation}
\label{def_lambdaj}
\lambda(t, \eps)=(\sqrt{a}\ast\varphi_\eps)(t),
\end{equation}
where $\varphi_\eps(t)=\frac{1}{\eps}\varphi(t/\eps).$
By construction, $\lambda$ is smooth in $t\in[0,T]$, and
$$
|\lambda(t, \eps)|\ge\sqrt{a_0},
$$
for all $t\in[0,T]$ and $\eps\in(0,1]$,
$$
|\lambda(t, \eps)-\sqrt{a(t)}|\le C \eps^\alpha
$$
uniformly in $t$ and $\eps$.
By substitution in \eqref{system_A} we get
\begin{align*}
\esp^{-\rho(t)\nu^{1/s}(\xi)}(\det H)^{-1}H\partial_tW &+\esp^{-\rho(t)
\nu^{1/s}(\xi)}(-\rho'(t)\nu^{1/s}(\xi))(\det H)^{-1}HW \\
&-\esp^{-\rho(t)\nu^{1/s}(\xi)}\frac{\partial_t\det H}{(\det H)^2}HW\\
&+\esp^{-\rho(t)\nu^{1/s}(\xi)}(\det H)^{-1}(\partial_tH)W\\
&={\rm i}\nu(\xi) \esp^{-\rho(t)\nu^{1/s}(\xi)}(\det H)^{-1}AHW+F.
\end{align*}
Multiplying both sides of the previous equation by $\esp^{\rho(t)\nu^{1/s}(\xi)}(\det H)H^{-1}$ we get
$$
\partial_tW-\rho'(t)\nu^{1/s}(\xi)W-\frac{\partial_t\det H}{\det H}W + H^{-1}(\partial_t H)W = {\rm i}\nu(\xi) H^{-1}AHW+\esp^{\rho(t)\nu^{1/s}(\xi)}(\det H)H^{-1}F.
$$
Hence,
\begin{equation}
\label{energy}
\begin{split}
\partial_t |W(t,\xi)|^2&=2{\rm Re} (\partial_t W(t,\xi),W(t,\xi))\\
&=2\rho'(t)\nu^{1/s}(\xi)|W(t,\xi)|^2\\
&\,\,\,\,\,\,\,\,\,\,\,\,\,\,\,\,\,\,+2\frac{\partial_t\det H}{\det H}|W(t,\xi)|^2\\
&\,\,\,\,\,\,\,\,\,\,\,\,\,\,\,\,\,\,-2{\rm Re}(H^{-1}\partial_t HW,W)\\
&\,\,\,\,\,\,\,\,\,\,\,\,\,\,\,\,\,\,-2\nu(\xi) {\rm Im} (H^{-1}AHW,W)\\
&\,\,\,\,\,\,\,\,\,\,\,\,\,\,\,\,\,\,+2\esp^{\rho(t)\nu^{1/s}(\xi)}{\rm Re}((\det H)H^{-1}F,W).
\end{split}
\end{equation}
It follows that
\begin{equation}
\label{energy_case2}
\begin{split}
\partial_t |W(t,\xi)|^2&\le 2\rho'(t)\nu^{1/s}(\xi)|W(t,\xi)|^2\\
&\,\,\,\,\,\,\,\,\,\,\,\,\,\,\,\,\,\,+2\biggl |\frac{\partial_t\det H}{\det H}\biggr||W(t,\xi)|^2\\
&\,\,\,\,\,\,\,\,\,\,\,\,\,\,\,\,\,\,+2\Vert H^{-1}\partial_t H\Vert|W(t,\xi)^2|\\
&\,\,\,\,\,\,\,\,\,\,\,\,\,\,\,\,\,\,+\nu(\xi) \Vert H^{-1}AH-(H^{-1}AH)^\ast\Vert |W(t,\xi)|^2\\
&\,\,\,\,\,\,\,\,\,\,\,\,\,\,\,\,\,\,+2\esp^{\rho(t)\nu^{1/s}(\xi)}\Vert(\det H)H^{-1}\Vert |F||W|.
\end{split}
\end{equation}

Now we want to show that for all $T>0$ there exist constants $c_1,c_2>0$ such that
\begin{equation}
\label{case_2_1}
\biggl |\frac{\partial_t\det H}{\det H}\biggr |\le c_1\eps^{\alpha-1},
\end{equation}
\begin{equation}
\label{case_2_2}
\Vert H^{-1}\partial_t H\Vert\le c_2\eps^{\alpha-1},
\end{equation}
\begin{equation}
\label{case_2_3}
\Vert H^{-1}AH-(H^{-1}AH)^\ast\Vert\le c_3\eps^\alpha,
\end{equation}
\begin{equation}
\label{case_2_4}
\Vert (\det H) H^{-1}\Vert\le c_4\eps^\alpha,
\end{equation}
for all $t\in[0,T]$ and $\eps\in(0,1]$.

By the definition
\begin{equation}\label{root_est_1}
\biggl|\frac{\partial_t\det H}{\det H}\biggr|=\biggl |\frac{2\partial_t(\lambda(t, \eps))}{2\lambda(t, \eps)}\biggr|\leq\frac{1}{\sqrt{a_{0}}}|\partial_t\lambda(t, \eps)|.
\end{equation}
For large enough $R$ we have that
\begin{multline}\label{root_est_2}
|\partial_t\lambda(t, \eps)|=\eps^{-1}\biggl|\int\limits_{-R}^{R}\sqrt{a}(\tau)\varphi_{\eps}^{'}(t-\tau)d\tau\biggr|=\biggl|\eps^{-1}\int\limits_{-R}^{R}\sqrt{a}(t-\eps\tau)\varphi^{'}(\tau)d\tau\biggr|\\
=\biggl|\eps^{-1}\int\limits_{-R}^{R}(\sqrt{a}(t-\eps\tau)-\sqrt{a}(t))\varphi^{'}(\tau)d\tau+\eps^{-1}\sqrt{a}(t)\int\limits_{-R}^{R}\varphi^{'}(\tau)d\tau\biggr|\leq C \eps^{-1}\eps^{\alpha}
\end{multline}
for some constant $C$. From \eqref{root_est_1} and \eqref{root_est_2} we conclude \eqref{case_2_1}. Since
\begin{equation}\label{root_est_3}
\begin{split}
\Vert H^{-1}\partial_t H\Vert&=\biggl|\biggl| (\lambda(t, \eps)+\lambda(t, \eps))^{-1}\left(
    \begin{array}{cc}
      -\partial_{t}\lambda(t, \eps) & -\partial_{t} \lambda(t, \eps) \\
      \partial_{t} \lambda(t, \eps) & \partial_{t} \lambda(t, \eps)
         \end{array}
  \right)\biggr|\biggr|\\
  &\leq |2\lambda(t, \eps)|^{-1}(2|\partial_{t} \lambda(t, \eps)|),
\end{split}
\end{equation}
by using \eqref{root_est_2}, we get the estimate \eqref{case_2_2}.

Finally, by the direct calculations, we have
\begin{multline*}
\Vert H^{-1}AH-(H^{-1}AH)^\ast\Vert=\biggr|\biggr| \frac{1}{\lambda {\rm e}^{i r(\xi)}}\left(
    \begin{array}{cc}
      0 & -(a-\lambda^{2}){\rm e}^{2 i r(\xi)} \\
      (a-\lambda^{2}){\rm e}^{2 i r(\xi)} & 0
         \end{array}
  \right)\biggr|\biggr|\\
  =\biggr|\frac{a-\lambda^{2}}{\lambda}\biggr|\le c_3\eps^\alpha,
\end{multline*}
and $\Vert (\det H) H^{-1}\Vert\le |\lambda| \le c_4\eps^\alpha$.

Hence, combining \eqref{case_2_1}, \eqref{case_2_2}, \eqref{case_2_3} and \eqref{case_2_4} with the energy \eqref{energy_case2} we obtain
\begin{multline*}
\partial_t |W(t,\xi)|^2\le (2\rho'(t)\nu^{1/s}(\xi)+c_1\eps^{\alpha-1}+c_2\eps^{\alpha-1}
+c_3\eps^{\alpha}\nu(\xi))
|W(t,\xi)|^2\\+c_{4}\esp^{(\rho(t)-\mu)\nu^{1/s}(\xi)}\varepsilon^{\alpha} |W(t,\xi)|,
\end{multline*}
since
$$
|F|\leq C \esp^{-\mu\nu^{1/s}(\xi)}
$$
for some constants $C$, $\mu>0$. Without loss of generality, we can assume $\nu(\xi) >0$.
Hence, by setting $\eps:=\nu^{-1}(\xi)$ we get
\begin{multline*}
\partial_t |W(t,\xi)|^2\le (2\rho'(t)\nu^{1/s}(\xi)+c'_1 \nu^{1-\alpha}(\xi) +c'_3\nu^{1-\alpha}(\xi))|W(t,\xi)|^2\\+c_{4}\nu^{-\alpha}(\xi)\esp^{(\rho(t)-\mu)\nu^{1/s}(\xi)} |W(t,\xi)|.
\end{multline*}
Set now $\rho(t)=\rho(0)-\kappa t$ with $\rho(0)$ and $\kappa>0$ to be chosen later. Assuming $|W(t,\xi)|\geq1$ (for the case $|W(t,\xi)|\leq1$ the same discussions are valid) and taking $\frac{1}{s}> 1-\alpha$ we have
\begin{multline*}
\partial_t |W(t,\xi)|^2\le (-2\kappa\nu^{1/s}(\xi)+c'_1 \nu^{1-\alpha}(\xi) +c'_3\nu^{1-\alpha}(\xi)+c_{4}\nu^{-\alpha}(\xi)\esp^{(\rho(0)-\mu)\nu^{1/s}(\xi)}) |W(t,\xi)|^{2}.
\end{multline*}
At this point, setting $\rho(0)<\mu$, for sufficiently large $\nu(\xi)$
we conclude that
$$
\partial_t|W(t,\xi)|^2\le 0,
$$
for $t\in[0,T]$ and, for example, without loss of generality, for $\nu(\xi) \ge 1$. Passing now to $V$ we get
\begin{equation}
\label{last_estimate}
\begin{split}
|V(t,\xi)|
&\le\esp^{-\rho(t)\nu^{1/s}(\xi)}\frac{1}{\det H(t)}\Vert H(t)\Vert|W(t,\xi)| \\
&\le
\esp^{-\rho(t)\nu^{1/s}(\xi)}\frac{1}{\det H(t)}\Vert H(t)\Vert|W(0,\xi)|\\
&\le\esp^{(-\rho(t)+\rho(0))\nu^{1/s}(\xi)}\frac{\det H(0)}{\det H(t)}\Vert H(t)\Vert \Vert H^{-1}(0)\Vert |V(0,\xi)|,
\end{split}
\end{equation}
where
\[
\frac{\det H(0)}{\det H(t)}\Vert H(t)\Vert \Vert H^{-1}(0)\Vert\le c'.
\]
This is due to the fact that $\det H(t)$ is a bounded function with $\det H(t)=\lambda_2(t)-\lambda_1(t)\ge 2\sqrt{a_0}$ for all $t\in[0,T]$ and $\eps\in(0,1]$, $\Vert H(t)\Vert \le c$ and $\Vert H^{-1}(0)\Vert\le c$ for all $t\in[0,T]$ and $\eps\in(0,1]$. Concluding, there exists a constant $c'>0$ such that
\[
|V(t,\xi)|\le c'\esp^{(-\rho(t)+\rho(0))\nu^{1/s}(\xi)}|V(0,\xi)|,
\]
for all $\nu(\xi) \ge 1$ and $t\in[0,T]$. It is now clear that choosing $\kappa>0$ small enough
we have that if $|V(0,\xi)|\le c\,\esp^{-\delta\nu^{1/s}(\xi)}$, $c,\delta>0$,
the same kind of an estimate holds for $V(t,\xi)$. We finally go back to $\xi$ and $\widehat{v}(t,\xi)$.
The previous arguments lead to
\[
\nu^{2}(\xi) |\widehat{v}(t,\xi)|^2+|\partial_t \widehat{v}(t,\xi)|^2
\le c'\esp^{(-\rho(t)+\rho(0))\nu^{1/s}(\xi)}\nu^{2}(\xi)|\widehat{v_0}(\xi)|^2+
c'\esp^{(-\rho(t)+\rho(0))\nu^{1/s}(\xi)}|\widehat{v_1}(\xi)|^2.
\]
Since
the initial data are both in $\gamma^s_\H$ we obtain that
\begin{equation}
\label{fin_est_case_2}
\nu^{2}(\xi) |\widehat{v}(t,\xi)|^2+|\partial_t \widehat{v}(t,\xi)|^2
\le c'\esp^{
\kappa T\nu^{1/s}(\xi)}(C_0\esp^{-A_0\nu^{1/s}(\xi)}+C_1\esp^{-A_1\nu^{1/s}(\xi)}),
\end{equation}
for suitable constants $C_0,C_1, A_0,A_1>0$ and $\kappa$ small enough, for $t\in[0,T]$ and all $\nu(\xi) \ge 1$.
The estimate \eqref{fin_est_case_2} implies that under the hypothesis of Case I.2  and for
\[
1\le s<1+\frac{\alpha}{1-\alpha},
\]
the solution $u(t, \cdot)\in\gamma^s_\H$ if the initial data are elements of $\gamma^s_\H$ and the source term is from $C([0, T]; \gamma^s_\H)$.

\subsection{Case I.2: Proof of Theorem \ref{theo_case_2} (b)} The proof  of
Theorem \ref{theo_case_2} (b) follows in an analogous way to deriving the proof of Theorem \ref{theo_case_3} (b) from the proof of Theorem \ref{theo_case_2} (a), and by recalling the characterisation of $\Dis_{(s)}$. So we refer to the proof of Theorem \ref{theo_case_3} (b) for more details.

\subsection{Case I.3: Proof of Theorem \ref{theo_case_3} (a)}
\label{ProofCase3}
We now assume that $a(t)\ge 0$ is of class $C^\ell$ on $[0,T]$ with $\ell\ge 2$.
Adopting the notations of the previous cases we want to study the well-posedness of the system
\eqref{EQ:system}:
it follows that the equation \eqref{eq_xi} can be written as the first order system
\[
\partial_t V(t,\xi)=i \nu(\xi) A(t)V(t,\xi)+F(t, \xi),
\]
where $V$ is the column vector with entries $V_1$ and $V_2$,
$$
F(t, \xi)=\left(
    \begin{array}{c}
      0\\
      \widehat{f}(t, \xi)
     \end{array}
  \right),
$$
and
$$
A(t)=\left(
    \begin{array}{cc}
      0 & 1\\
      a(t) & 0 \\
           \end{array}
  \right).
$$
The initial conditions are
\[
V(0,\xi)=\left(
    \begin{array}{c}
      i\nu(\xi) \widehat{u_0}(\xi)\\
      \widehat{u}_{1}(\xi)
     \end{array}
  \right).
\]
Let $Q_\eps$  be a so-called quasi--symmetriser of $A$, defined by
\[
Q_{\eps}(t):=\left(
    \begin{array}{cc}
      a(t) & 0\\
    0 & 1 \\
           \end{array}
  \right) + \eps^2 \left(
    \begin{array}{cc}
      1 & 0\\
      0 & 0 \\
           \end{array}
  \right).
\]
The general technique of using quasi-symmetrisers in weakly hyperbolic problems goes back to D'Ancona and Spagnolo \cite{DS}. For its adaptation to the situation similar to the one under our consideration we can also refer to \cite{GR13}.

\smallskip
Now let us introduce the energy $E_\eps(t,\xi)=(Q_\eps(t) V(t,\xi),V(t,\xi))$. By direct computations we get
\begin{equation}\label{Energy-Estimate-1}
\partial_t E_\eps(t,\xi)=(\partial_t Q_\eps V,V)+i \nu(\xi)((Q_\eps A-A^\ast Q_\eps) V,V)+(Q_\eps F,V)+(Q_\eps V,F).
\end{equation}
Let us calculate $Q_{\eps}A-A^{*}Q_{\eps}.$ By the direct calculations we get
\begin{equation}
\begin{split}
Q_{\eps}A-A^{*}Q_{\eps}&=\left(
    \begin{array}{cc}
      a+\eps^{2} & 0\\
    0 & 1 \\
           \end{array}
  \right) \left(
    \begin{array}{cc}
      0 & 1\\
      a & 0 \\
           \end{array}
  \right)\\
  &\,\,\,\,\,\,\,\,\,\,\,\,\,\,\,\,\,\,\,\,\,\,\,\,\,
  -\left(
    \begin{array}{cc}
      0 & a\\
      1 & 0 \\
           \end{array}
  \right) \left(
    \begin{array}{cc}
      a+\eps^{2} & 0\\
    0 & 1 \\
           \end{array}
  \right)=  \left(
    \begin{array}{cc}
      0 & \eps^{2}\\
    -\eps^{2} & 0 \\
           \end{array}
  \right).
\end{split}
\end{equation}
This implies
$$
((Q_\eps A-A^\ast Q_\eps) V,V)=\eps^{2}\overline{V_{1}}V_{2}-\eps^{2} V_{1}\overline{V_{2}}
$$
for all $V\in\mathbb C^{2}$. By estimating
$$
|((Q_{\eps} A-A^\ast Q_\eps)V,V)|\leq 2 \eps^{2}|V_{1}| |V_{2}|\leq 2 \eps\sqrt{a+\eps^{2}}|V_{1}| |V_{2}|\leq \eps((a+\eps^{2})V_{1}^{2}+V_{2}^{2})=\eps (Q_\eps V,V),
$$
finally, we can write
\begin{equation}\label{Energy-Estimate-2}
|((Q_{\eps} A-A^\ast Q_\eps)V,V)|\le \eps (Q_\eps V,V),
\end{equation}
for all $\eps\in(0,1)$, $t\in[0,T]$ and $V\in\C^2$.

Now to estimate \eqref{Energy-Estimate-1} we prove first that there exists a constant $C\geq1$ such that
\begin{equation}\label{Energy-Estimate-0}
C^{-1}\eps^{2}|V|^2\le (Q_\eps V,V)\le C |V|^2,
\end{equation}
for all $\eps\in(0,1]$, $t\in[0,T]$ and all non-zero continuous functions $V:[0,T]\times\ind\to \C^{2}$.

By recalling components of $V=(V_{1}, V_{2})$, we have
$$
(Q_\eps V,V)=(a+\eps^{2})V_{1}^{2}+V_{2}^{2}\leq C (V_{1}^{2}+V_{2}^{2})=C |V|^2,
$$
and
$$
(Q_\eps V,V)=(a+\eps^{2})V_{1}^{2}+V_{2}^{2}\geq \eps^{2}V_{1}^{2}+V_{2}^{2}\geq C^{-1} (\eps^{2}V_{1}^{2}+\eps^{2}V_{2}^{2})= C^{-1}\eps^{2}|V|^2
$$
for some constant $C\geq1$.

Note, that for all $\eps\in(0,1]$, $t\in[0,T]$ and all non-zero continuous functions $V:[0,T]\times\ind\to \C^{2}$, we get
\begin{equation}\label{Energy-Estimate-3}
\int_{0}^T\frac{|(\partial_t Q_\eps(t) V(t,\xi),V(t,\xi))|}{(Q_\eps(t) V(t,\xi), V(t,\xi))}\, dt\le C\eps^{-2/\ell},
\end{equation}
for some $C_{1}>0$. For more details on the estimate \eqref{Energy-Estimate-3}, see \cite{GR13}, or \cite{KS06}.

Since $f\in C([0, T]; \gamma^s_\H)$, we have $|\widehat{f}(t,\xi)|\leq C {\rm e}^{-\mu \nu^{1/s}(\xi)}$ for all $t\in [0, T]$ and $\xi$, and for some constants $C$, $\mu>0$, we obtain
$$
|(Q_\eps F,V)+(Q_\eps V,F)|\leq 2 \Vert Q_\eps \Vert |F| |V| \leq C_{1} {\rm e}^{-\mu  \nu^{1/s}(\xi)}|V|
$$
for some constant $C_{1}$. Assuming $|V|\geq1$ ($|V|\leq1$ can be considered in a similar way) and by using \eqref{Energy-Estimate-2} and \eqref{Energy-Estimate-3} in \eqref{Energy-Estimate-1}, and by Gronwall's lemma, we get
\begin{equation}
\label{EE_case_3}
E_\eps(t,\xi)\le E_\eps(0,\xi){\rm e}^{c(\eps^{-2/\ell}+\eps\nu(\xi))},
\end{equation}
for some constant $c>0$, uniformly in $t$, $\xi$ and $\eps$. By setting $\eps^{-2/\ell}=\eps\nu(\xi)$ we arrive at
$$
E_\eps(t,\xi)\le E_\eps(0,\xi)C_T{\rm e}^{C_T \nu^{\frac{1}{\sigma}}(\xi)},
$$
with $\sigma=1+\frac{\ell}{2}$. An application of \eqref{Energy-Estimate-0} yields the estimate
$$
C^{-1}\eps^2|V(t,\xi)|^2\le E_\eps(t,\xi)\le E_\eps(0,\xi)C_T{\rm e}^{C_T\nu^{\frac{1}{\sigma}}(\xi)}
\le C|V(0,\xi)|^2C_T{\rm e}^{C_T\nu^{\frac{1}{\sigma}}(\xi)}
$$
which implies
$$
|V(t,\xi)|\le C_2\nu^{\frac{\ell}{2\sigma}}(\xi){\rm e}^{C\nu^{\frac{1}{\sigma}}(\xi)}|V(0,\xi)|,
$$
for some $C_2>0$, for all $t\in[0,T]$ and for all $\xi$. We now go back to $\widehat{u}(t,\xi)$. Hence, we get
\begin{equation}\label{est_1}
|\widehat{u}(t,\xi)|^2\le C^2 \nu^{\frac{\ell}{\sigma}}(\xi){\rm e}^{2C\nu^{\frac{1}{\sigma}}(\xi)}
(\nu^{2}(\xi)|\widehat{u}_0(\xi)|^2+|\widehat{u}_1(\xi)|^2).
\end{equation}
Recall that the initial data $u_0$ and $u_1$ are elements of $\gamma^s_\H$ and, therefore,
there exist constants $A',C'>0$ such that
\begin{equation}
\label{est_2}
|\esp^{A'|\sigma_{\H}(\xi)|^{\frac{1}{2s}}} \widehat{u}_0(\xi)|\le C',\qquad
|\esp^{A'|\sigma_{\H}(\xi)|^{\frac{1}{2s}}}\widehat{u}_1(\xi)|\le C'.
\end{equation}
Inserting \eqref{est_2} in \eqref{est_1}, taking $s<\sigma$  and $\nu(\xi)$
large enough we conclude that there exist constants $C^{''}>0$ such that
$$
|\esp^{A'|\sigma_{\H}(\xi)|^{\frac{1}{2s}}} \widehat{u}(t,\xi)|^2\le C^{''},
$$
for all $t\in[0,T]$. It follows that
$$
\sum_{\xi\in\ind} |\esp^{\frac{A'}{2}|\sigma_{\H}(\xi)|^{\frac{1}{2s}}} \widehat{u}(t,\xi)|^2<\infty,
$$
i.e. $u(t, \cdot)\in\gamma^s_\H$ provided that
$$
1\le s<\sigma=1+\frac{\ell}{2}.
$$

\subsection{Case I.3: Proof of Theorem \ref{theo_case_3} (b)} Not changing anything in the proof of Theorem \ref{theo_case_3} (a), analogously, in this case we have estimate \eqref{est_1}. Recall the characterisation of $\Dis_{(s)}$. Since $u_0$ and $u_1$ are elements of $\Dis_{(s)}$, $f\in C([0, T]; \Dis_{(s)})$, and, therefore, by Corollary \ref{Fourier character--1}
there exist constants $A_{1},C_{1}>0$ such that
\begin{multline}
\label{est_3}
|\esp^{-A_{1}|\sigma_{\H}(\xi)|^{\frac{1}{2s}}} \widehat{u}_0(\xi)|\le C_{1},\\
|\esp^{-A_{1}|\sigma_{\H}(\xi)|^{\frac{1}{2s}}} \widehat{u}_1(\xi)|\le C_{1}, \\
\sup_{t\in [0, T]}|\esp^{-A_{1}|\sigma_{\H}(\xi)|^{\frac{1}{2s}}} \widehat{f}(t, \xi)|\le C_{1}.
\end{multline}
Inserting \eqref{est_3} in \eqref{est_1}, taking $s<\sigma$  and $\nu(\xi)$
large enough we conclude that there exist constant $C_{2}>0$ such that
$$
|\esp^{-A_{1}|\sigma_{\H}(\xi)|^{\frac{1}{2s}}} \widehat{u}(t,\xi)|^2\le C_{2},
$$
for all $t\in[0,T]$. It follows that there are constants $A, C>0$ such that
$$
|\widehat{u}(t,\xi)|\le C \esp^{A\nu^{\frac{1}{s}}(\xi)} ,
$$
i.e. $u(t, \cdot)\in\Dis_{(s)}$ provided that
$$
1\le s<\sigma=1+\frac{\ell}{2}.
$$
This completes the proof of Theorem \ref{theo_case_3} (b).

\subsection{Case I.4: Proof of Theorem \ref{theo_case_4} (a)}

Now assume $a(t)\ge 0$ and $a\in C^\alpha([0,T])$ with $0<\alpha<2$.
Here the roots
$\pm\sqrt{a(t)}$ can coincide and are not H\"older of order $\alpha$ but of order $\alpha/2$.
For an adaptation of the proof of Theorem \ref{theo_case_2}
we will set that
$a\in C^{2\alpha}([0,T])$, $0<\alpha<1$ and that the roots are from
$C^\alpha$.
Again we seek a solution of the system \eqref{system_A}
in the form
\[
V(t,\xi)={\rm e}^{-\rho(t)\nu^{\frac{1}{s}}(\xi)}(\det H)^{-1}HW,
\]
where
$\rho\in C^1([0,T])$ is a real valued function which will be suitably chosen in the sequel,
\[
H(t)=\left(
    \begin{array}{cc}
      1 & 1 \\
      \lambda_1(t, \eps)
      & \lambda_2(t, \eps) 
         \end{array}
  \right)
\]
and, for $\varphi\in C^\infty_{0}(\mathbb{R})$, $\varphi\ge 0$ with integral $1$,  we set
\begin{equation}
\label{def_lambdaj_4}
\begin{split}
\lambda_1(t, \eps)&=(-\sqrt{a}\ast\varphi_\eps)(t)+\eps^\alpha,\\
\lambda_2(t, \eps)&=(+\sqrt{a}\ast\varphi_\eps)(t)+ 2\eps^\alpha.
\end{split}
\end{equation}
Note that $\lambda_1$ and $\lambda_2$ are smooth in $t\in[0,T]$, and
\begin{equation}\label{est-4-0}
\lambda_2(t, \eps)-\lambda_1(t, \eps)\ge \eps^\alpha,
\end{equation}
for all $t\in[0,T]$ and $\eps\in(0,1]$,
\[
|\lambda_1(t, \eps)+\sqrt{a(t)}|\le c_1\eps^\alpha
\]
and
\[
|\lambda_2(t, \eps)-\sqrt{a(t)}|\le c_2\eps^\alpha,
\]
uniformly in $t$ and $\eps$. In analogy to the Case I.2 we take the energy estimate
\begin{multline}\label{est-4-1}
\partial_t |W(t,\xi)|^2\le 2\rho'(t)\nu^{\frac{1}{s}}(\xi)
|W(t,\xi)|^2+2\biggl |\frac{\partial_t\det H}{\det H}\biggr||W(t,\xi)|^2\\
+2\Vert H^{-1}\partial_t H\Vert|W(t,\xi)^2|+\nu(\xi)\Vert H^{-1}AH-(H^{-1}AH)^\ast\Vert |W(t,\xi)|^2\\
+2\esp^{\rho(t)\nu^{1/s}(\xi)}\Vert(\det H)H^{-1}\Vert |F||W|.
\end{multline}

By using \eqref{root_est_2}, \eqref{est-4-0} and discussions of the proof of the Case I.2, it is easy to show that for all $T>0$ there exist constants $c_1,c_2>0$ such that
\begin{equation}
\label{case_4_1}
\biggl |\frac{\partial_t\det H}{\det H}\biggr |\le c_1\eps^{1},
\end{equation}
\begin{equation}
\label{case_4_2}
\Vert H^{-1}\partial_t H\Vert\le c_2\eps^{-1},
\end{equation}
for all $t\in[0,T]$ and $\eps\in(0,1]$.

Now, let us show that
\begin{equation}
\label{case_4_3}
\Vert H^{-1}AH-(H^{-1}AH)^\ast\Vert\le c\eps^\alpha,
\end{equation}
for some $c$, and for all $t\in[0,T]$ and $\eps\in(0,1]$. By the simple calculations we get
\begin{equation*}
\begin{split}
\Vert H^{-1}AH-(H^{-1}AH)^\ast\Vert&=\Bigl|\Bigl|\frac{ 1 
}{\lambda_{2}-\lambda_{1}}\left(
    \begin{array}{cc}
      0 & (\lambda_{1}^{2}+\lambda_{2}^{2}-2a) \\
      -(\lambda_{1}^{2}+\lambda_{2}^{2}-2a) & 0
         \end{array}
  \right) \Big|\Bigl|\\
  &\leq\eps^{-\alpha} |\lambda_{1}^{2}+\lambda_{2}^{2}-2a|\\
  &=\eps^{-\alpha} |(\lambda_{1}-\sqrt{a})(\lambda_{1}+\sqrt{a})+(\lambda_{2}-\sqrt{a})(\lambda_{2}+\sqrt{a})|\\
  &\leq C (|(\lambda_{1}-\sqrt{a})|+|(\lambda_{2}+\sqrt{a})|).
\end{split}
\end{equation*}
Indeed, there is a sufficiently large $R$ and constants $C_{1}, C_{2}$ such that
\begin{equation}\label{root_est_case-4-1}
\begin{split}
|\lambda_{1}(t, \eps)-\sqrt{a}|&=\biggl|\int\limits_{-R}^{R}(\sqrt{a}(t-\eps\tau)-\sqrt{a}(t))\varphi^{'}(\tau)d\tau+\eps^{2}\biggr|\leq C_{1} \eps^{\alpha},\\
|\lambda_{2}(t, \eps)+\sqrt{a}|&=\biggl|\int\limits_{-R}^{R}(\sqrt{a}(t-\eps\tau)-\sqrt{a}(t))\varphi^{'}(\tau)d\tau+2\eps^{2}\biggr|\leq C_{2} \eps^{\alpha}.
\end{split}
\end{equation}
Then \eqref{case_4_3} holds.

By combining \eqref{case_4_1}, \eqref{case_4_2} and \eqref{case_4_3} for $|W(t,\xi)|^2$  we obtain
\begin{multline}
\partial_t |W(t,\xi)|^2\le (2\rho'(t)\nu^{\frac{1}{s}}(\xi)+c_1\eps^{-1}+c_2\eps^{-1}+c_3\eps^{\alpha}\nu(\xi))|W(t,\xi)|^2\\
+c_{4}\esp^{(\rho(t)-\mu)\nu^{1/s}(\xi)}\varepsilon^{\alpha} |W(t,\xi)|
\end{multline}
for some constants $c_{4}$ and $\mu>0$.

Consider the case $|W(t,\xi)|\geq1$. Again, it is not restrictive to assume that $\nu(\xi)>0$. Setting $\eps:=\nu^{-\gamma}(\xi)$ with
\[
\gamma=\frac{1}{1+\alpha}
\]
we get
\begin{multline*}
\partial_t |W(t,\xi)|^2\le (2\rho'(t)\nu^{\frac{1}{s}}(\xi)+c'_1\nu^{\gamma}(\xi)+c'_3\nu^{1-\gamma\alpha}(\xi)+c'_{4}\esp^{(\rho(t)-\mu)\nu^{1/s}(\xi)}\nu^{-\gamma\alpha}(\xi))|W(t,\xi)|^2
\\
\le (2\rho'(t)\nu^{\frac{1}{s}}(\xi)+C_{1}\nu^{1/(1+\alpha)}(\xi)+C_{2}\esp^{(\rho(t)-\mu)\nu^{1/s}(\xi)}\nu^{-\frac{\alpha}{1+\alpha}})(\xi))|W(t,\xi)|^2.
\end{multline*}
At this point taking
\[
\frac{1}{s}> \frac{1}{1+\alpha}
\]
and $\rho(t)=\rho(0)-\kappa t$ with $\kappa>0$ to be chosen later, for large enough $\nu(\xi)$ and $\rho(0)<\mu$ we conclude that
\[
\partial_t|W(t,\xi)|^2\le 0,
\]
for $t\in[0,T]$ and $\nu(\xi)\ge 1$. Passing now to $V$ and by the same arguments of Case I.2 with
\[
\frac{\det H(0)}{\det H(t)}\Vert H(t)\Vert \Vert H^{-1}(0)\Vert\le c\,\eps^{-\alpha}=c\,\nu^{\gamma\alpha}(\xi)
=c\,\nu^{\frac{\alpha}{1+\alpha}}(\xi)
\]
we conclude that there exists a constant $c'>0$ such that
\[
|V(t,\xi)|\le c'\nu^{\frac{\alpha}{1+\alpha}}(\xi)\esp^{(-\rho(t)+\rho(0))\nu^{\frac{1}{s}}(\xi)}|V(0,\xi)|,
\]
for all $\nu(\xi)\ge 1$ and $t\in[0,T]$.
We finally go back to $\widehat{u}(t,\xi)$. We have
\[
\nu^2(\xi)|\widehat{u}(t,\xi)|^2\le c'\esp^{(-\rho(t)+\rho(0))\nu(\xi)^{\frac{1}{s}}}
(\nu(\xi)^2
|\widehat{u}_0(\xi)|^2+ |\widehat{u}_1(\xi)|^2),
\]
with the constant $c'$ independent of $\xi$.
Multiplying by $\esp^{\delta\nu(\xi)^{\frac{1}{s}}}$, we get
\begin{multline}\label{EQ:est-HS}
|\esp^{\delta |\sigma_{\H}(\xi)|^{\frac{1}{2s}}}\sigma_{\H}(\xi) \widehat{u}(t,\xi)|^2 \\
\leq
c' (|\esp^{(-\rho(t)+\rho(0)+\delta) |\sigma_{\H}(\xi)|^{\frac{1}{2s}}}
\sigma_{\H}(\xi) \widehat{u}_0(\xi)|^2
+|\esp^{(-\rho(t)+\rho(0)+\delta)|\sigma_{\H}(\xi)|^{\frac{1}{2s}}} \widehat{u}_1(\xi)|^2),
\end{multline}
for any $\delta>0$.
Since the initial data are both in $\gamma^{s}_{\H}$, we get that
$$
\sum_{\xi\in\ind} (|\esp^{(\kappa T+\delta)|\sigma_{\H}(\xi)|^{\frac{1}{2s}}}\sigma_{\H}(\xi) \widehat{u}_0(\xi)|^2
+|\esp^{(\kappa T+\delta)|\sigma_{\H}(\xi)|^{\frac{1}{2s}}} \widehat{u}_1(\xi)|^2)<\infty
$$
for some $\delta>0$ if $\kappa$ is small enough.
Taking the same sum $\sum_{\xi\in\ind}$ of the expressions in \eqref{EQ:est-HS},
and using Lemma \ref{LEM: FTl2},  we obtain that
\begin{equation}
\label{fin_est_case_4}
\sum_{\xi\in\ind} |\esp^{\delta |\sigma_{\H}(\xi)|^{\frac{1}{2s}}}\sigma_{\H}(\xi) \widehat{u}(t,\xi)|^2
<\infty,
\end{equation}
for $\kappa$ small enough, for $t\in[0,T]$. This completes
the proof of Theorem \ref{theo_case_4}.

\subsection{Case I.4: Proof of Theorem \ref{theo_case_4} (b)} Similarly to the proof of Theorem \ref{theo_case_3} (b) following from the proof of Theorem \ref{theo_case_4} (a), recalling the characterisation of $\Dis_{(s)}$, we get the proof of Theorem \ref{theo_case_4} (b).

\section{Proofs of Part II}
\label{SEC:proofs2}

We start by proving Theorem \ref{atheo_vws} assuring the existence of very weak solutions.

\subsection{Existence of very weak solutions}

As in Theorem \ref{atheo_vws} we consider two cases.

\smallskip
{\bf Case II.1.} We now assume that  coefficient $a=a(t)$ is a distribution with compact support contained in $[0,T]$. Since the formulation of \eqref{aCPa} in this case might be impossible in the distributional sense due to issues related to the product of distributions, we replace \eqref{aCPa} with a regularised equation. In other words, we regularise $a$ by a convolution with a mollifier in $C^\infty_0(\mathbb R)$ and get nets of smooth functions as coefficients. More precisely, let $\psi\in C^\infty_0(\mathbb R)$, $\psi\ge 0$ with $\int\psi=1$, and let $\omega(\eps)$ be a positive function converging to $0$ as $\eps\to 0$, with the rate of convergence to be specified later. Define
$$
\psi_{\omega(\eps)}(t):=\frac{1}{\omega(\eps)}\psi\left(\frac{t}{\omega(\eps)}\right),
$$
$$
a_{\eps}(t):=(a\ast \psi_{\omega(\eps)})(t), \qquad f_{\eps}(t):=(f(\cdot)\ast \psi_{\omega(\eps)})(t), \qquad t\in[0,T].
$$
Since $a$ is a positive distribution with compact support (hence a Radon measure) and $\psi\in C^\infty_0(\mathbb R)$, $\supp\,\psi\subset K$, $\psi\ge 0$,
identifying the measure $a$ with its density, we can write
\begin{align*}
a_{\eps}(t)&=(a\ast \psi_{\omega(\eps)})(t)=\int\limits_{\mathbb R}a(t-\tau)\psi_{\omega(\eps)}(\tau)d\tau=\int\limits_{\mathbb R}a(t-\omega(\eps)\tau)\psi(\tau)d\tau \\
&=\int\limits_{\textsc{K}}a(t-\omega(\eps)\tau)\psi(\tau)d\tau\geq a_{0} \int\limits_{\textsc{K}}\psi(\tau)d\tau:=\tilde a_{0}>0,
\end{align*}
with a positive constant $\tilde a_{0}>0$ independent of $\eps$.

By the structure theorem for compactly supported distributions, we have that there exist $L_{1}, L_{2}\in\mathbb N$ and $c_{1}, c_{2}>0$ such that
\begin{equation}\label{aEQ: a-q-C-moderate}
|\partial^{k}_{t}a_{\eps}(t)|\le c_{1}\,\omega(\eps)^{-L_{1}-k}, \,\,\, |\partial^{k}_{t}f_{\eps}(t)|\le c_{2}\,\omega(\eps)^{-L_{2}-k},
\end{equation}
for all $k\in\mathbb N_{0}$ and $t\in[0,T]$. We note that the numbers $L_{1}$ and $L_{2}$ may be related to the distributional orders of $a$ and $f$ but we will not be needing such a relation in our proof.

Hence, $a_{\eps}$ and $f_{\eps}$ are $C^\infty$--moderate regularisations of the coefficient $a$ and of the source term $f$.
Now, fix $\eps\in(0,1]$, and consider the regularised problem
\begin{equation}\label{aPrTh2:EQ:1}
\left\{
\begin{split}
\partial_{t}^{2}u_{\eps}(t)+a_{\eps}(t)\H u_{\eps}(t)&=f_{\eps}(t), \; t\in [0,T],\\
u_{\eps}(0)&=u_{0}\in\Sp, \\
\partial_{t}u_{\eps}(0)&=u_{1}\in\Sp,
\end{split}
\right.
\end{equation}
with the Cauchy data satisfying
$(u_0,u_1)\in {H}^{s+1}_\H \times {H}^{s}_\H$, $a_{\eps}\in C^\infty[0, T]$ and also $f_{\eps}\in C^\infty([0, T]; {H}^{s}_\H)$.
Then all discussions and calculations of Theorem \ref{theo_case_1} are valid. Thus by Theorem \ref{theo_case_1} the equation \eqref{aPrTh2:EQ:1} has a unique solution in the space $C([0,T];{H}^{s+1}_\H)\cap C^{1}([0,T];{H}^{s}_\H)$. In fact, this unique solution is from $C^\infty([0,T];{H}^{s}_\H)$. This can be checked by taking in account that $a_{\eps}\in C^\infty([0,T])$ and by differentiating both sides of the equation \eqref{aPrTh2:EQ:1} in $t$ inductively. Applying Theorem \ref{theo_case_1} to the equation \eqref{aPrTh2:EQ:1},
using the inequality
$$
\Vert \partial_{t} S(t,\xi) \Vert \leq C |\partial_{t} a_{\eps}(t)| \leq C \omega(\eps)^{-L-1}
$$
and Gronwall's lemma, we get the estimate
\begin{multline}\label{aES: exp-1}
\|u_{\eps}(t,\cdot)\|_{{H}^{s+1}_\H}^2+\| \partial_t u_{\eps}(t,\cdot)\|_{{H}^s_\H}^2
\\
\leq
C \exp(c\,\omega(\eps)^{-L-1}T) (\| u_0\|_{{H}^{s+1}_\H}^2+\|u_1\|_{{H}^{s}_\H}^2+\|f\|_{C([0, T]; {H}^{s}_\H)}^2),
\end{multline}
where the coefficient $L$ is from \eqref{aEQ: a-q-C-moderate}.

Put $\omega^{-1}(\eps)\sim\log\eps$. Then the estimate \eqref{aES: exp-1} transforms to
$$
\|u_{\eps}(t,\cdot)\|_{{H}^{s+1}_\H}^2+\| \partial_t u_{\eps}(t,\cdot)\|_{{H}^s_\H}^2\leq
C \eps^{-L-1} (\| u_0\|_{{H}^{s+1}_\H}^2+\|u_1\|_{{H}^{s}_\H}^2+\|f\|_{C([0, T]; {H}^{s}_\H)}^2),
$$
with possibly new constant $L$. To simplify the notation we continue denoting them by the same letters.

Now, let us show that there exist $N\in\mathbb N_{0}$, $c>0$ and, for all $k\in\mathbb N_{0}$ there exist $N_k>0$ and $c_k>0$ such that
$$
\|\partial_t^k u_\eps(t,\cdot)\|_{{H}^{s}_\H}\le c_k \eps^{-N-k},
$$
for all $t\in[0,T]$, and $\eps\in(0,1]$.

Applying \eqref{est_sym_1} and \eqref{E_1} to $u_{\eps}$, and by
taking account the properties of $a_{\eps}$, we get
\begin{align*}
|\sigma_{\H}(\xi)| \,
|\widehat{u_\eps}(t,\xi)|^2 &+|\partial_t
\widehat{u_\eps}(t,\xi)|^2 \\
& \le C \eps^{-L-1}(|\sigma_{\H}(\xi)| \,
|\widehat{u}_0(\xi)|^2+|\widehat{u}_1(\xi)|^2+\sup_{t\in[0, T]}|\widehat{f}(t, \xi)|^{2})
\end{align*}
for all $t\in[0,T]$, $\xi\in\ind$, for some $L>0$ with the
constant $C$ independent of $\xi$. Thus, we obtain
$$
\|\partial_t u_\eps(t,\cdot)\|_{{H}^{s}_\H} \le C \eps^{-L-1}, \,\,\, \| u_\eps(t,\cdot)\|_{{H}^{s+1}_\H}\le
C \eps^{-L}.
$$
Acting by the iterations of $\partial_{t}$  on the
equality
$$
\partial_{t}^{2}u_{\eps}(t)=-a_{\eps}(t)\H u_{\eps}(t)+f_{\eps}(t),
$$
and taking it in $\Sp$--norms, we conclude that $u_{\eps}$ is
$C^\infty([0,T];{H}^{s}_\H)$-moderate.

This shows that the Cauchy problem \eqref{aCPa} has a very weak solution.

\vspace{2mm}

{\bf Case II.2.} Repeating discussions of Case II.1, in this case we get that for a nonnegative function $a_{\eps}(t)$ there exist $L\in\mathbb N$ and $c_{1}>0$ such that
\begin{equation}\label{aEQ: a-q-C-moderate2}
|\partial^{k}_{t}a_{\eps}(t)|\le c_{1}\,\omega(\eps)^{-L-k},
\end{equation}
for all $k\in\mathbb N_{0}$ and $t\in[0,T]$, i.e. $a_{\eps}$ and $f_{\eps}$ are $C^\infty$--moderate regularisations of the coefficient $a$ and of the source term $f$.
Fix $\eps\in(0,1]$, and consider the regularised problem
\begin{equation}\label{aPrTh2:EQ:2}
\left\{ \begin{split}
\partial_{t}^{2}u_{\eps}(t)+a_{\eps}(t)\H u_{\eps}(t)&=f_{\eps}(t), \; t\in [0,T],\\
u_{\eps}(0)&=u_{0}\in\Sp, \\
\partial_{t}u_{\eps}(0)&=u_{1}\in\Sp,
\end{split}
\right.
\end{equation}
with the Cauchy data satisfy
$u_0,u_1\in \Dis_{(s)}$ and $a_{\eps}\in C^\infty[0, T]$.
Then all discussions and calculations of Theorem \ref{theo_case_3} (b) are valid. Thus by Theorem \ref{theo_case_3} (b) the equation \eqref{aPrTh2:EQ:2} has a unique solution in the space $u\in C^2([0,T]; \Dis_{(s)})$ for any $s$. In fact, this unique solution is from $C^\infty([0,T]; \Dis_{(s)})$. This can be checked by taking in account that $a_{\eps}\in C^\infty([0,T])$ and by differentiating both sides of the equation \eqref{aPrTh2:EQ:2} in $t$ inductively. Applying Theorem \ref{theo_case_3} (b) to the equation \eqref{aPrTh2:EQ:2},
using the inequality
$$
|\partial_{t} a_{\eps}(t)| \leq C \omega(\eps)^{-L-1},
$$
we get the estimate
\begin{multline}\label{VWS-est_1}
|\widehat{u_{\eps}}(t,\xi)|^2+|\partial_t
\widehat{u_\eps}(t,\xi)|^2\le \\
 \le C^2 \nu^{\frac{\ell}{\sigma}}(\xi){\rm e}^{2C\omega(\eps)^{-L-1}\nu^{\frac{1}{\sigma}}(\xi)}
(\nu^{2}(\xi)|\widehat{u}_{0,\varepsilon}(\xi)|^2+|\widehat{u}_{1,\varepsilon}(\xi)|^2+\sup_{t\in[0, T]}|\widehat{f}(t, \xi)|^{2}).
\end{multline}
By putting $\omega^{-1}(\eps)\thicksim \log \eps$ and repeating as in the proof of Theorem \ref{theo_case_3} (b), from \eqref{VWS-est_1} we conclude that there exists $\eta>0$ and, for $p=0,1$ there exist $c_p>0$ and $N_{p}>0$ such that
\begin{equation}\label{VWS-est_2}
\|\esp^{-\eta\H^{\frac{1}{2s}}} \partial_t^{p} u_\eps(t,\cdot)\|_{\Sp} \le c_p \, \eps^{-N_{p}-p},
\end{equation}
for all $t\in[0,T]$ and $\eps\in(0,1]$. Now, we need to prove that the estimate \eqref{VWS-est_2} holds for all $p\in\mathbb N$. To show this we use the equality
$$
\partial_{t}^{2}u_{\eps}(t)=-a_{\eps}(t)\H u_{\eps}(t)+f_{\eps}(t).
$$
Acting by the iterations of $\partial_{t}$ on the last equality
and using properties of $a_{\eps}$ and the estimate \eqref{VWS-est_2}, we obtain that $u_{\eps}$ is
$C^\infty([0,T]; \Dis_{(s)})$-moderate.

The theorem is proved.

\subsection{Consistency with the classical well--posedness}

Here we show that when the coefficients are
regular enough then the very weak solution coincides with the
classical one: this is the content of Theorem \ref{atheo_consistency-2} which we will prove here.

Moreover, we show that the very weak solution provided by Theorem \ref{atheo_vws}
is unique in an appropriate sense. For the formulation of the uniqueness statement it will be convenient to use the language of Colombeau algebras.

\begin{defi}
\label{adef_negl_net} We say that $(u_\eps)_\eps$ is
\emph{$C^\infty$-negligible} if for all $K\Subset\mathbb R$, for
all $\alpha\in\mathbb N$ and for all $\ell\in\mathbb N$ there exists
a constant $c>0$ such that
$$
\sup_{t\in K}|\partial^\alpha u_\eps(t)|\le c\eps^{\ell},
$$
for all $\eps\in(0,1]$.
\end{defi}
Since we are dealing with time-dependent distributions supported in the interval $[0,T]$, it is sufficient to take $K=[0,T]$ in the above definition.

We now introduce the Colombeau algebra as the quotient
$$
\mathcal G(\mathbb R)=\frac{C^\infty-\text{moderate\,
nets}}{C^\infty-\text{negligible\, nets}}.
$$
For the general analysis of $\mathcal G(\mathbb R)$ we refer to
e.g. Oberguggenberger \cite{Oberguggenberger:Bk-1992}.

\begin{thm}[Uniqueness] \leavevmode
\label{atheo_consistency-1}
\begin{itemize}
\item[(II.1)] Let $a$ be a positive distribution
with compact support included in $[0,T]$, such that $a\ge a_{0}$ for some constant $a_{0}>0$.
Let $(u_0,u_1)\in {H}^{s+1}_\H\times {H}^{s}_\H$ and $f\in
\mathcal G([0,T]; H^s_{\H})$ for some $s\in\mathbb R$.
Then there exists an embedding of the coefficient $a$ into $\mathcal G([0,T])$,
such that the Cauchy problem \eqref{aCPa}, that is
\begin{equation*}
\left\{ \begin{split}
\partial_{t}^{2}u(t)+a(t)\H u(t)&=f(t), \; t\in [0,T],\\
u(0)&=u_{0}\in\Sp, \\
\partial_{t}u(0)&=u_{1}\in\Sp,
\end{split}
\right.
\end{equation*}
has a unique solution $u\in
\mathcal G([0,T]; H^s_{\H})$.

\item[(II.2)] Let $a\geq 0$ be a nonnegative distribution
with compact support included in $[0,T]$.
Let $u_0, u_1\in \Dis_{(s)}$ and $f\in
\mathcal G([0,T]; \Dis_{(s)})$ for some $s\in\mathbb R$.
Then there exists an embedding of the coefficient $a$ into $\mathcal G([0,T])$,
such that the Cauchy problem \eqref{aCPa} has a unique solution $u\in
\mathcal G([0,T]; \Dis_{(s)})$.
\end{itemize}
\end{thm}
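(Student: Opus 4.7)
Since existence of very weak solutions is already provided by Theorem \ref{atheo_vws}, the content of this theorem is the uniqueness of such solutions, understood in the Colombeau sense. My plan is to fix the embedding of $a$ into $\mathcal G([0,T])$ as the equivalence class of the regularisation $a_\eps = a\ast \psi_{\omega(\eps)}$ introduced in the existence proof, with the logarithmic calibration $\omega^{-1}(\eps)\sim \log\eps$, and analogously embed $f$. Uniqueness then means: if $(\widetilde a_\eps)_\eps$ and $(\widetilde f_\eps)_\eps$ are representatives producing the same Colombeau elements, so that $(a_\eps-\widetilde a_\eps)_\eps$ is $C^\infty$-negligible and $(f_\eps-\widetilde f_\eps)_\eps$ is negligible in $C([0,T];H^s_\H)$ (respectively in the ultradistributional sense), then the two very weak solutions $(u_\eps)_\eps$ and $(\widetilde u_\eps)_\eps$ coincide modulo a negligible net.

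For Case (II.1), set $U_\eps := u_\eps - \widetilde u_\eps$. Subtracting the two regularised Cauchy problems yields the Cauchy problem
\begin{equation*}
\partial_t^2 U_\eps + a_\eps \H U_\eps = (\widetilde a_\eps - a_\eps)\H \widetilde u_\eps + (f_\eps - \widetilde f_\eps),\quad U_\eps(0)=0,\ \partial_t U_\eps(0)=0.
\end{equation*}
The right-hand side is a sum of a product of a negligible net with an $H^s_\H$-moderate net and a negligible net, hence is negligible in $C([0,T]; H^s_\H)$. I would then apply the energy argument from the proof of Theorem \ref{theo_case_1} to $U_\eps$, tracking carefully the dependence on $a_\eps$. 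As in the derivation of \eqref{aES: exp-1}, Gronwall's lemma produces a growth factor $\exp(c\,\omega(\eps)^{-L-1}T)$ which, with our choice $\omega^{-1}(\eps)\sim\log\eps$, is bounded by $\eps^{-N}$ for some $N$. Multiplied against the super-polynomial decay of the forcing, this gives $\|U_\eps\|_{H^{s+1}_\H} + \|\partial_t U_\eps\|_{H^s_\H}\leq c_\ell \eps^\ell$ for every $\ell$. Propagating the estimate to higher $t$-derivatives by repeatedly differentiating the equation and combining the moderateness of $\widetilde u_\eps$ with the negligibility of $a_\eps - \widetilde a_\eps$ yields $C^\infty([0,T]; H^s_\H)$-negligibility of $(U_\eps)_\eps$, i.e. uniqueness in $\mathcal G([0,T]; H^s_\H)$.

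For Case (II.2), the structural argument is identical but the energy estimate is the one of Theorem \ref{theo_case_3} (b). One obtains $\xi$-by-$\xi$ bounds of the form
\begin{equation*}
|\widehat{U_\eps}(t,\xi)|^2 + |\partial_t\widehat{U_\eps}(t,\xi)|^2
\leq C\,\nu^{\ell/\sigma}(\xi)\,\esp^{2C\omega(\eps)^{-L-1}\nu^{1/\sigma}(\xi)}\bigl(\text{negligible data}\bigr),
\end{equation*}
as in \eqref{VWS-est_1}. Following the passage from \eqref{VWS-est_1} to \eqref{VWS-est_2}, the logarithmic choice of $\omega(\eps)$ converts the exponential prefactor into a power $\eps^{-N_p}$ at the ultradistributional scale $\esp^{-\eta\H^{1/(2s)}}$, which is then absorbed by the super-polynomial decay of the right-hand side, giving $C^\infty([0,T]; \Dis_{(s)})$-negligibility of $U_\eps$.

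The main obstacle is the inherent tension between the Gronwall exponential in the energy estimate and the Colombeau notion of negligibility: a naive embedding of $a$ (for example with $\omega(\eps)=\eps$) produces an $\exp(\eps^{-N})$ blow-up that no negligible right-hand side can tame. The logarithmic calibration is therefore not just a convenience but the crucial device that makes uniqueness possible, and ensuring its compatibility with Case (II.2) requires verifying that one can choose a single $\eta>0$ in the ultradistributional weight so that both the moderate bound on $\widetilde u_\eps$ and the estimates for the forcing term of $U_\eps$ hold for the same weight; this should follow from the proof of Case (II.2) of Theorem \ref{atheo_vws} together with the freedom to shrink $\eta$.
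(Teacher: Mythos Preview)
Your proposal is correct and follows essentially the same route as the paper: take the difference of two representatives, obtain a Cauchy problem with zero initial data and negligible forcing, apply the energy estimate via the symmetriser (Case II.1) or quasi-symmetriser (Case II.2), and use the logarithmic calibration $\omega^{-1}(\eps)\sim\log\eps$ to turn the Gronwall exponential into a harmless $\eps^{-N}$ that is absorbed by the negligible right-hand side. Your write-up is in fact more explicit than the paper's in identifying the forcing as $(\widetilde a_\eps-a_\eps)\H\widetilde u_\eps+(f_\eps-\widetilde f_\eps)$ and in articulating why the logarithmic choice is indispensable rather than merely convenient.
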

\begin{proof} {\bf Case II.1.}
Let us show that by embedding coefficients in the
corresponding Colombeau algebras the Cauchy problem has a unique
solution $u\in\mathcal G([0,T]; H^s_{\H})$. Assume now that the Cauchy problem has another
solution $v\in\mathcal G([0,T]; H^s_{\H})$. At the level of
representatives this means
\begin{equation*}
\left\{ \begin{split}
\partial_{t}^{2}(u_\eps-v_\eps)(t)+a_\eps(t)\H (u_\eps-v_\eps)(t)&=f_{\eps}(t), \\
(u_\eps-v_\eps)(0)&=0,  \\
(\partial_{t}u_\eps-\partial_{t}v_\eps)(0)&=0,
\end{split}
\right.
\end{equation*}
where $f_{\eps}$ is $C^{\infty}([0,T]; H^s_{\H})$--negligible.
The corresponding first order system is
$$
\partial_t\left(
                             \begin{array}{c}
                               W_{1,\eps} \\
                                W_{2,\eps} \\
                             \end{array}
                           \right)
= \left(
    \begin{array}{cc}
      0 & i\H^{1/2}\\
      i a_{\eps}(t)\H^{1/2}& 0 \\
           \end{array}
  \right)
  \left(\begin{array}{c}
                               W_{1,\eps} \\

                               W_{2,\eps} \\
                             \end{array}
                           \right)+\left(\begin{array}{c}
                               0 \\

                               f_\eps \\
                             \end{array}
                           \right),
$$
where $W_{1,\eps}$ and $W_{2,\eps}$ are obtained via the
transformation
$$
W_{1,\eps}=\H^{1/2}(u_\eps-v_\eps),\,\,\,
W_{2,\eps}=\partial_t(u_\eps-v_\eps).
$$
This system will be studied after $\H$--Fourier transform, as a system
of the type
\begin{equation*}
\partial_t V_{\eps}(t,\xi)=i \nu(\xi) A_{\eps}(t, \xi)V_{\eps}(t,\xi)+F_{\eps}(t,\xi),
\end{equation*}
with
$$
F_\eps=\left(\begin{array}{c} 0 \\
    \mathcal{F}_{\H}f_\eps \\
             \end{array}
       \right),
$$
and
$$
A_{\eps}(t, \xi)=\left(
    \begin{array}{cc}
      0 & 1\\
      a_{\eps}(t) & 0 \\
           \end{array}
  \right),
$$
with Cauchy data
$$
V_{\eps}(0, \xi)=\left(
    \begin{array}{cc}
      0\\
      0 \\
           \end{array}
  \right).
$$
For the symmetriser
$$
S_{\eps}(t, \xi)=\left(
    \begin{array}{cc}
      a_{\eps}(t) & 0\\
      0 & 1 \\
           \end{array}
  \right)
$$
 define the energy
$$E_{\eps}(t,\xi):=(S_{\eps}(t, \xi)V_{\eps}(t,\xi),V_{\eps}(t,\xi)).$$
We get
\begin{align*}
\partial_t E_{\eps}(t,\xi)&=(\partial_t S_{\eps}(t, \xi)V_{\eps}(t,\xi),V_{\eps}(t,\xi))+(S_{\eps}(t, \xi)\partial_t V_{\eps}(t,\xi),V_{\eps}(t,\xi))\\
&\,\,\,\,\,\,\,\,\,\,\,\,\,\,\,\,\,\,\,\,\,\,\,\,\,\,\,\,\,\,\,\,\,\,\,\,\,\,\,\,\,\,\,\,\,\,\,\,\,\,\,\,\,\,\,\,\,\,\,\,\,\,\,\,
+(S_{\eps}(t, \xi)V_{\eps}(t,\xi),\partial_t V_{\eps}(t,\xi))\\
&=(\partial_t S_{\eps}(t, \xi)V_{\eps}(t,\xi),V_{\eps}(t,\xi))\\
&+i \nu(\xi)
(S_{\eps}(t, \xi)A_{\eps}(t, \xi)V_{\eps}(t,\xi),V_{\eps}(t,\xi))-i \nu(\xi) (S_{\eps}(t, \xi)V_{\eps}(t,\xi), A_{\eps}(t, \xi)V_{\eps}(t,\xi))\\
&+(S_{\eps}(t, \xi)F_{\eps}(t,\xi),V_{\eps}(t,\xi))+(S_{\eps}(t, \xi)V_{\eps}(t,\xi), F_{\eps}(t,\xi))\\
&=(\partial_t S_{\eps}(t, \xi)V_{\eps}(t,\xi),V_{\eps}(t,\xi))+i \nu(\xi) ((S_{\eps}A_{\eps}-A^{\ast}_{\eps} S_{\eps})(t, \xi)V_{\eps}(t,\xi),V_{\eps}(t,\xi))\\
&+(S_{\eps}(t, \xi)F_{\eps}(t,\xi),V_{\eps}(t,\xi))+(V_{\eps}(t,\xi), S_{\eps}(t, \xi) F_{\eps}(t,\xi))\\
&=(\partial_t S_{\eps}(t, \xi)V_{\eps}(t,\xi),V_{\eps}(t,\xi))+2\textrm{Re}(S_{\eps}(t, \xi)F_{\eps}(t,\xi),V_{\eps}(t,\xi))\\
&\le \Vert \partial_t S_{\eps}\Vert |V_{\eps}(t,\xi)|^2+2\textrm{Re}(S_{\eps}(t, \xi)F_{\eps}(t,\xi),V_{\eps}(t,\xi))\\
&\le \Vert \partial_t S_{\eps}\Vert |V_{\eps}(t,\xi)|^2+2\Vert
S_{\eps}\Vert |F_{\eps}(t,\xi)| |V_{\eps}(t,\xi)|.
\end{align*}
Assuming for the moment that $|V_{\eps}(t,\xi)|>1$, we get the
energy estimate
\begin{align*}
\partial_t E_{\eps}(t,\xi)&\le \Vert \partial_t S_{\eps}\Vert |V_{\eps}(t,\xi)|^2+2\Vert
S_{\eps}\Vert |F_{\eps}(t,\xi)| |V_{\eps}(t,\xi)|\\
&\le (\Vert \partial_t S_{\eps}\Vert +2\Vert
S_{\eps}\Vert |F_{\eps}(t,\xi)|)|V_{\eps}(t,\xi)|^2\\
&\le \left(|\partial_t a_{\eps}(t)|+|a_{\eps}(t)| |F_{\eps}(t,\xi)|\right)|V_{\eps}(t,\xi)|^2\\
&\le c\, \omega(\eps)^{-L-1} E_{\eps}(t,\xi),
\end{align*}
i.e. we obtain
\begin{equation}
\label{aEQ:E_1}
\partial_t E_{\eps}(t,\xi)\le c\, \omega(\eps)^{-L-1} E_{\eps}(t,\xi),
\end{equation}
for some constant $c>0$. By Gronwall's lemma applied to
inequality \eqref{aEQ:E_1} we conclude that for all $T>0$
$$
E_{\eps}(t,\xi)\le \exp(c\, \omega(\eps)^{-L-1} \, T) E_{\eps}(0,\xi).
$$
Hence, inequalities \eqref{est_sym_1} yield
\begin{align*}
c_0|V_{\eps}(t,\xi)|^2\le E_{\eps}(t,\xi)&\le \exp(c\, \omega(\eps)^{-L-1}\, T)  E_{\eps}(0,\xi) \\
&\le \exp(c_1\, \omega(\eps)^{-L-1}\, T) |V_{\eps}(0,\xi)|^2,
\end{align*}
for the constant $c_{1}$ independent of $t\in[0,T]$ and $\xi$.

By putting $\omega^{-1}(\eps)\sim\log\eps$, we get
\begin{align*}
|V_{\eps}(t,\xi)|^2\le c\, \eps^{-L-1} |V_{\eps}(0,\xi)|^2
\end{align*}
for some constant $c$ and some (new) $L$.
Since $|V_{\eps}(0,\xi)|=0$, we have
$$
|V_{\eps}(t,\xi)|\equiv0,
$$
for all $\xi$ and for $t\in[0,T]$.

Now consider the case when $|V_{\eps}(t,\xi)|<1$. Assume that $|V_{\eps}(t,\xi)|\geq c \, \omega(\varepsilon)^{\alpha}$ for some constant $c$ and $\alpha>0$. It means
$$
\frac{1}{|V_{\eps}(t,\xi)|}\leq C \, \omega(\varepsilon)^{-\alpha}.
$$
Then the estimate for the energy becomes
\begin{align*}
\partial_t E_{\eps}(t,\xi)&\le C\, \omega(\varepsilon)^{-L_{1}} E_{\eps}(t,\xi),
\end{align*}
where $L_{1}=L+\textrm{max}\{1, \alpha\}$, and by Gronwall's lemma
$$
|V_{\eps}(t,\xi)|^2\le \exp(C'\, \omega(\varepsilon)^{-L_{1}} \, T) |V_{\eps}(0,\xi)|^2.
$$
And again, by putting $\omega^{-1}(\eps)\sim\log\eps$, we get
$$
|V_{\eps}(t,\xi)|^2\le c'\, \varepsilon^{-L_{1}} |V_{\eps}(0,\xi)|^2
$$
for some $c'$ and some (new) $L_{1}$.
Since $|V_{\eps}(0,\xi)|=0$, we have
$$
|V_{\eps}(t,\xi)|\equiv0,
$$
for all $t\in[0, T]$ and $\xi$.

The last case is when $|V_{\eps}(t,\xi)|\leq c \, \omega(\varepsilon)^{\alpha}$ for some constant $c$ and $\alpha>0$. Thus, the first part is proved.

\vspace{3mm}

 {\bf Case II.2.} Here we will repeat some discussions of the first part but we will also use the quasi-symmetrisers. Now, let us show that by embedding coefficients in the
corresponding Colombeau algebras the Cauchy problem has a unique
solution $u\in\mathcal G([0,T]; \Dis_{(s)})$. Assume now that the Cauchy problem has another
solution $v\in\mathcal G([0,T]; \Dis_{(s)})$. At the level of
representatives this means
\begin{equation*}
\left\{ \begin{split}
\partial_{t}^{2}(u_\eps-v_\eps)(t)+a_\eps(t)\H (u_\eps-v_\eps)(t)&=f_{\eps}(t), \\
(u_\eps-v_\eps)(0)&=0,  \\
(\partial_{t}u_\eps-\partial_{t}v_\eps)(0)&=0,
\end{split}
\right.
\end{equation*}
where $f_{\eps}$ is $C^{\infty}([0,T]; \Dis_{(s)})$--negligible.
The corresponding first order system is
$$
\partial_t\left(
                             \begin{array}{c}
                               W_{1,\eps} \\
                                W_{2,\eps} \\
                             \end{array}
                           \right)
= \left(
    \begin{array}{cc}
      0 & i\H^{1/2}\\
      i a_{\eps}(t)\H^{1/2}& 0 \\
           \end{array}
  \right)
  \left(\begin{array}{c}
                               W_{1,\eps} \\

                               W_{2,\eps} \\
                             \end{array}
                           \right)+\left(\begin{array}{c}
                               0 \\

                               f_\eps \\
                             \end{array}
                           \right),
$$
where $W_{1,\eps}$ and $W_{2,\eps}$ are obtained via the
transformation
$$
W_{1,\eps}=\H^{1/2}(u_\eps-v_\eps),\,\,\,
W_{2,\eps}=\partial_t(u_\eps-v_\eps).
$$
This system will be studied after $\H$--Fourier transform, as a system
of the type
\begin{equation*}
\partial_t V_{\eps}(t,\xi)=i \nu(\xi) A_{\eps}(t, \xi)V_{\eps}(t,\xi)+F_{\eps}(t,\xi),
\end{equation*}
with
$$
F_\eps=\left(\begin{array}{c} 0 \\
    \mathcal{F}_{\H}f_\eps \\
             \end{array}
       \right),
$$
and
$$
A_{\eps}(t, \xi)=\left(
    \begin{array}{cc}
      0 & 1\\
      a_{\eps}(t) & 0 \\
           \end{array}
  \right),
$$
with Cauchy data
$$
V_{\eps}(0, \xi)=\left(
    \begin{array}{cc}
      0\\
      0 \\
           \end{array}
  \right).
$$
For the quasi--symmetriser $Q_{\varepsilon}(t, \delta)$, defined as
$$
Q_{\varepsilon}(t, \delta):=\left(
    \begin{array}{cc}
      a_{\varepsilon}(t) & 0\\
    0 & 1 \\
           \end{array}
  \right) + \delta^2 \left(
    \begin{array}{cc}
      1 & 0\\
      0 & 0 \\
           \end{array}
  \right),
$$
we  define the energy
$$
E_\eps(t,\xi, \delta)=(Q_\eps(t, \delta) V(t,\xi),V(t,\xi)).
$$
By direct computations we get
\begin{align*}\label{VWS-Energy-Estimate-1}
\partial_t E_\eps(t,\xi, \delta)=(\partial_t Q_\eps(t, \delta)V(t,\xi),V(t,\xi))+i \nu(\xi)((Q_\eps A-A^\ast Q_\eps)(t)V,V)\\
+2\textrm{Re}(Q_\eps(t, \delta)F_{\eps}(t,\xi),V_{\eps}(t,\xi)).
\end{align*}
By using properties that were established in the proof of Theorem \ref{theo_case_3} and continuing to discuss as in the first part, from the last equality we conclude that the Cauchy problem \eqref{aCPa} has a unique solution $u\in
\mathcal G([0,T]; \Dis_{(s)})$ for all $s\in\mathbb R$.

It completes the proof of Theorem \ref{atheo_consistency-1}.
\end{proof}

\begin{proof}[Proof of Theorem \ref{atheo_consistency-2}]
{\bf Case II.1.} We now want to compare the classical solution $\widetilde{u}$ given by Theorem \ref{theo_case_1} with the very weak solution $u$ provided by  Theorem \ref{atheo_consistency-2}. By the definition of the classical solution we know that
\begin{equation}\label{aConsistency:EQ:2}
\left\{ \begin{split}
\partial_{t}^{2}\widetilde{u}(t)+a(t)\H \widetilde{u}(t)&=f(t), \\
\widetilde{u}(0)&=u_{0}\in\Sp, \\
\partial_{t}\widetilde{u}(0)&=u_{1}\in\Sp.
\end{split}
\right.
\end{equation}
By the definition of the very weak solution $u$, there exists a representative $(u_\eps)_\eps$ of $u$ such
that
\begin{equation}\label{aConsistency:EQ:3}
\left\{ \begin{split}
\partial_{t}^{2}u_{\eps}(t)+a_\eps(t)\H u_{\eps}(t)&=f_{\eps}(t), \\
u_{\eps}(0)&=u_{0}\in\Sp,  \\
\partial_{t}u_{\eps}(0)&=u_{1}\in\Sp,
\end{split}
\right.
\end{equation}
for suitable embeddings of the coefficient $a$ and of the source term $f$. Noting that for $a\in L^{\infty}_{1}([0,T])$ the
nets $(a_{\eps}-a)_\eps$ is converging to $0$ in
$C([0,T])$, we can rewrite
\eqref{aConsistency:EQ:2} as
\begin{equation}\label{aConsistency:EQ:4}
\left\{ \begin{split}
\partial_{t}^{2}\widetilde{u}(t)+\widetilde{a}_\eps(t)\H \widetilde{u}(t)&=\widetilde{f}_{\eps}(t), \\
\widetilde{u}(0)&=u_{0}\in\Sp,  \\
\partial_{t}\widetilde{u}(0)&=u_{1}\in\Sp,
\end{split}
\right.
\end{equation}
where $\widetilde{f}_\eps\in C([0,T]; H^s_{\H})$ and $\widetilde{a}_\eps$ are other representations of $f$ and $a$. From \eqref{aConsistency:EQ:3} and \eqref{aConsistency:EQ:4} we get that $\widetilde{u}-u_\eps$ solves the Cauchy problem
\begin{equation*}
\left\{ \begin{split}
\partial_{t}^{2}(\widetilde{u}-u_\eps)(t)+a_\eps(t)\H(\widetilde{u}-u_\eps)(t)&=n_{\eps}(t), \\
(\widetilde{u}-u_\eps)(0)&=0,  \\
(\partial_{t}\widetilde{u}-\partial_{t}u_\eps)(0)&=0.
\end{split}
\right.
\end{equation*}
As in the first part of the proof we arrive, after
reduction to a system and by application of the Fourier transform
to estimate $|(\widetilde{V}-V_\eps)(t,\xi)|$ in
terms of $(\widetilde{V}-V_\eps)(0,\xi)$ and the right-hand side
$n_\eps(t)$, to the energy estimate
\begin{align*}
\partial_t E_{\eps}(t,\xi)\le &|\partial_t a_{\eps}(t)| |(\widetilde{V}-V_\eps)(t,\xi)|^2\\
&+2|a_{\eps}(t)| |n_{\eps}(t,\xi)| |(\widetilde{V}-V_\eps)(t,\xi)|.
\end{align*}
Since the coefficients are regular
enough, we simply get
\begin{align*}
\partial_t E_{\eps}(t,\xi)&\le c_{1}\, |(\widetilde{V}-V_\eps)(t,\xi)|^2+c_{2}\, |n_{\eps}(t,\xi)| |(\widetilde{V}-V_\eps)(t,\xi)|.
\end{align*}
Since  $(\widetilde{V}-V_\eps)(0,\xi)=0$ and $n_\eps\to 0$ in
$C([0,T];H^s_{\H})$ and continue to discussing as in Theorem \ref{atheo_consistency-1} we conclude that
$|(\widetilde{V}-V_\eps)(t,\xi)|\leq c\, \omega(\varepsilon)^{\alpha}$ for some constant $c$ and $\alpha>0$. Then we have $u_\eps\to \widetilde{u}$
in $C([0,T];{H}^{1+s}_\H) \cap
C^1([0,T];{H}^{s}_\H)$.
Moreover, since any other
representative of $u$ will differ from $(u_\eps)_\eps$ by a
$C^\infty([0,T];H^s_{\H})$-negligible net, the limit is the
same for any representative of $u$.

\vspace{3mm}

{\bf Case II.2.} This part can be proven as the previous Case II.1 with slight modifications.
\end{proof}

\bigskip
Conflict of Interest: The authors declare that they have no conflict of interest.

\end{document}